\documentclass[12pt]{amsart}
\usepackage{multirow}
\usepackage[margin=1.1in]{geometry}
\usepackage{amssymb,amsthm}
\usepackage{amssymb}
\usepackage{epsfig}
\usepackage{hyperref}
\usepackage{pstricks}
\usepackage{pst-plot}
\newtheorem{lem}{Lemma}
\newtheorem{lemma}[lem]{Lemma}
\newtheorem{prop}[lem]{Proposition}
\newtheorem{cor}[lem]{Corollary}
\newtheorem{theorem}[lem]{Theorem}
\numberwithin{lem}{section}

\theoremstyle{remark}
\newtheorem{remark}[lem]{Remark}
\newtheorem{example}[lem]{Example}

\numberwithin{equation}{section}

\def\A{{\mathcal A}}
\def\Z{{\mathbb Z}}
\def\L{{\mathcal L}}

\def\P{{\mathcal P}}
\def\x{{\bf x}}
\def\F{{\bf F}}
\def\G{{\bf G}}
\def\I{{\mathcal I}}
\def\FF{{\mathcal F}}
\def\head{{\rm head}}
\def\hF{{\hat F}}
\def\hP{{\hat P}}
\def\hQ{{\hat Q}}
\def\hR{{\hat R}}
\def\Frac{{\rm Frac}}

\def\Q{{\mathbb Q}}

\def\s{{\mathcal S}}
\def\tB{{\tilde B}}

\def\W{{\mathcal W}}

\def\yy{{\mathbf y}}

\begin{document}
\author{Thomas Lam}\address
 {Department of Mathematics\\ University of Michigan\\ Ann Arbor\\ MI 48109 USA.}
 \date{\today}
 \email{tfylam@umich.edu}
 \urladdr{http://www.math.lsa.umich.edu/\~{ }tfylam}
 \thanks{T.L. was supported by NSF grant DMS-0901111 and by a Sloan Fellowship.}
  \author{Pavlo Pylyavskyy}\address
{Department of Mathematics\\ University of Minnesota\\ Minneapolis\\ MN 55414 USA.}
 \email{ppylyavs@umn.edu}
 \urladdr{http://sites.google.com/site/pylyavskyy/}
 \thanks{P.P. was supported by NSF grant DMS-0757165.}
\title{Laurent phenomenon algebras}
\maketitle
\begin{abstract}
We generalize Fomin and Zelevinsky's cluster algebras by allowing exchange polynomials to be arbitrary irreducible polynomials, rather than binomials.
\end{abstract}

\section{Introduction}
In their paper \cite{CA1} Fomin and Zelevinsky introduced a remarkable algebraic object called cluster algebras. The original motivation was to provide a combinatorial model 
for studying total positivity and Lusztig's canonical bases for semisimple Lie groups.  
It was quickly realized however that cluster algebras are rather ubiquitous in mathematics, appearing for example in the
representation theory of quivers and finite-dimensional algebras, Poisson geometry, Teichm\"uller theory, integrable systems, and the study of Donaldson-Thomas invariants.
 
The core idea of cluster algebras is that the generators of a commutative algebra, called {\it cluster variables}, are grouped into sets called {\it {clusters}}. A {\it {seed}} consists of a cluster together with a polynomial, called the {\it {exchange polynomial}}, associated with each cluster variable. The exchange polynomial must be a polynomial in the other variables of this cluster, and is always 
a binomial. One can then apply a {\it {mutation procedure}} to a variable in a cluster, exchanging it for a different variable
according to the following rule:
$$\text{old variable} \times \text{new variable} = \text{exchange binomial}.$$ 
The exchange polynomials are also mutated, producing a mutated seed from the old seed.  One key remarkable property of such systems then is the {\it {Laurent phenomenon}},
which says that any cluster variable is a Laurent polynomial when written as a rational function in any other cluster. 

From the onset of the theory it was known that the Laurent phenomenon holds in a more general setting, where the exchange polynomials are not necessarily binomials: Fomin and Zelevinsky \cite{FZLP} established the Laurent phenomenon for a number of families of combinatorial recurrences, including the Somos sequences, the cube recurrence, and the Gale-Robinson sequence.
However, the work of \cite{FZLP} depended on already knowing the global pattern of exchange polynomials, the Laurentness with respect to which one is trying to establish.  What \cite{FZLP} does {\it {not}} provide is a rule on 
how to derive the global exchange pattern from knowing only the local one in an initial seed, which is what is achieved for cluster algebras.

In this work, we propose a method to propagate arbitrary (irreducible) exchange polynomials. We then prove that the Laurent phenomenon always holds, and we call our new algebras {\it Laurent phenomenon algebras}, or {\it LP algebras}.  
The new paradigm of mutation that we offer is as follows:
$$\text{old variable} \times \text{new variable} = \text{exchange Laurent polynomial}.$$
Here the Laurent polynomial on the right hand side is equal to the exchange polynomial of the variable divided by a monomial in the rest of the variables in the same cluster. 
The exchange polynomials of a seed determine its exchange Laurent polynomials.

Let us list some features of cluster algebras which extend, or conjecturally extend to LP algebras:
\begin{enumerate}
 \item The Laurent phenomenon (Theorem \ref{T:Laurent}) holds for LP algebras (cf. \cite{CA1}).
 \item There is a rich theory of finite types, and the associated cluster complexes appear to be polytopal complexes with rich combinatorics; see Sections \ref{sec:r2} and \ref{sec:ex} (cf. \cite{CA2,CFZ,FZ2}).  In \cite{LP2} we study LP algebras with a linear seed, and in particular we show that the number of finite types of LP algebras grows exponentially with rank.
 \item The cluster monomials appear to be linearly independent, and for finite type LP algebras appear to form linear bases (cf. \cite{CK}).
 \item For a suitable initial seed, the cluster variables appear to be Laurent polynomials with {\it positive} coefficients; see Section \ref{sec:r2} and \cite{LP2}.
 \item There are interesting examples of LP algebras of {\it {finite mutation type}}; see Section \ref{sec:ex} (cf. \cite{FST}).
\item The coefficients of exchange polynomials satisfy interesting dynamics under mutation; see Section \ref{sec:r2} (cf. \cite{CA4}).
\item Beautiful combinatorial recurrences occur as exchange relations of LP algebras, including the Gale-Robinson sequence and cube recurrence; see Section \ref{sec:ex} (cf. \cite{FZLP,FZ2,Pro}).
 \item LP algebras appear naturally as coordinate rings of Lie groups or certain varieties naturally associated to Lie groups (cf. \cite{CA3}).
\end{enumerate}

Let us elaborate on the last point.  The initial motivating examples of cluster algebras were the coordinate rings of double Bruhat cells of semisimple Lie groups \cite{CA3}.  
In \cite{LPEL} we constructed a family of {\it electrical Lie groups} naturally associated with electrical networks in a disk.  
The positive parts of these electrical Lie groups come with a decomposition into cells analogous to the Bruhat decomposition of the totally positive part of the unipotent subgroup of a semisimple group.  The dynamics of parametrizations of these cells is controlled by {\it electrical LP algebras} in the same way the dynamics of parametrizations of Bruhat cells is controlled by cluster algebras \cite{CA3}.  In the upcoming work \cite{LP3} we shall explain the details. We refer the reader to Section \ref{sec:ex} for an example.

Let us list some differences between cluster algebras and Laurent phenomenon algebras:
\begin{enumerate}
\item
For certain initial seeds, the cluster algebra generated by that seed may not be the same as the LP algebra generated by that seed; in Corollary \ref{C:cluster} we show that this never happens if the cluster algebra has principal coefficients.
 \item In the definition of seed mutation of a LP algebra, a substitution is first made in an exchange polynomial and then a (possibly very interesting) polynomial factor is removed; in the cluster case this factor is always just a monomial.
 \item In a LP algebra it is possible for the exchange polynomial of one cluster variable to depend on another cluster variable, while the reverse is not true; in the cluster case this relation is always symmetric.
 \item In a LP algebra mutation a priori depends on the exchange polynomials of all cluster variables of the seed, including cluster variables which are not being mutated.  The extent to which this dependence is not present is a very interesting question, a special case of which is addressed in \cite[Theorem 2.4]{LP2}.  In the cluster case {\it freezing} a variable by never mutating it is straightforward.
\item In a LP algebra, the {\it {cluster complex}} that describes which variables can belong to the same cluster is not necessarily a flag complex (that is, it is not necessarily given by just pairwise
compatibility), see Remark \ref{rem:flag}; this property is known to hold for cluster algebras arising from surfaces, and is
conjectured for cluster algebras in general \cite{FSTh}. 
\end{enumerate}

The paper is organized as follows. In Section \ref{sec:seeds} we define seeds and seed mutation of LP algebras, and establish their basic properties. In Section \ref{sec:LP} we give the definition of LP algebras. 
In Section \ref{sec:LPCL} we compare cluster algebras with LP algebras, and discuss sufficient conditions for a cluster algebra to be a LP algebra.  In Section \ref{sec:cat} we prove that the
Laurent phenomenon holds for LP algebras. In Section \ref{sec:r2} we give a complete classification of rank two LP algebras of finite type. In Section \ref{sec:ex} we discuss several interesting families of examples of LP algebras, recovering and explaining connections to the work of Chekhov and Shapiro \cite{ChSh}, Hone \cite{Ho}, and Henriques and Speyer \cite{HS}.




\section{Seeds and seed mutation}\label{sec:seeds}

Recall that an element $f \in A$ of a unique factorization domain is {\it irreducible} if it is non-zero, not a unit, and cannot be expressed as the product $f = gh$ of 
two elements $g, h \in A$ which are non-units.  If $f, g \in A$ and $g$ is not a unit, and not zero in $A$ it makes sense to ask for the highest power of $g$ that divides $f$.

\subsection{Seeds}
Much of our notation and terminology imitates that in the theory of cluster algebras \cite{CA1,CA2,CA3,CA4}.

Let $S$ be a {\it coefficient ring} over $\Z$, which we assume to be a unique factorization domain.  For example $S$ could be $\Z$, a polynomial ring over $\Z$, or a Laurent polynomial ring over $\Z$.  Let $n \geq 1$ be a positive integer and write $[n]$ for $\{1,2,\ldots,n\}$.  Let the {\it ambient field} $\FF$ be the rational function field in $n$ independent variables over the field of fractions $\Frac(S)$. 

A {\it seed} in $\FF$ is a pair $(\x,\F)$ where
\begin{itemize}
\item
$\x = \{x_1,x_2,\ldots,x_n\}$ is a transcendence basis for $\FF$ over $\Frac(S)$.
\item
$\F = \{F_1,F_2,\ldots,F_n\}$ is a collection of polynomials in $\P = S[x_1,x_2,\ldots,x_n]$ satisfying:
\begin{enumerate}
\item[(LP1)]
$F_i$ is an irreducible element of $\P$ and is not divisible by any variable $x_j$
\item[(LP2)]
$F_i$ does not involve the variable $x_i$
\end{enumerate}
\end{itemize}
The variables $\{x_1,x_2,\ldots,x_n\}$ are called {\it cluster variables}, and the polynomials $$\{F_1,F_2,\ldots,F_n\}$$ are called {\it exchange polynomials}.  As is usual in the theory of cluster algebras, the set $\{x_1,x_2,\ldots,x_n\}$ will be called a {\it cluster}.  If $t = (\x,\F)$ is a seed, we let $\L = \L(t)$ denote the Laurent polynomial ring $S[x_1^{\pm 1},x_2^{\pm 1},\ldots,x_n^{\pm 1}]$.  If $x$ is a cluster variable, we shall use the notation $F_x$ to denote the exchange polynomial associated to a cluster variable $x$.  This is to be distinguished from the notation $F(y)$ in use later.  We call $n$ the {\it rank} of the seed $(\x,\F)$.

For two polynomials $f,g \in \P$, or more generally two elements $f, g \in \FF$, write $f \propto g$ to mean that $f$ and $g$ differ (multiplicatively) by a unit in $S$.

\begin{remark}
The sets $\{x_1,x_2,\ldots,x_n\}$ and $\{F_1,F_2,\ldots,F_n\}$ are considered to be unordered, but the information of which exchange polynomial corresponds to which cluster variable is given.  When giving an example, we will often list a seed by giving a set of ordered pairs, each pair $(x_i,F_i)$ consisting of a cluster variable and its exchange polynomial.
\end{remark}
\begin{remark}
The mutation dynamics that we shall discuss sometimes behave properly even for seeds $t$ not satisfying the irreducibility condition of (LP1), though in all the examples we have encountered  we can reduce to consider a seed $t'$ which does satisfy (LP1), for example by changing the coefficient ring, or by introducing new coefficients. 
\end{remark}

For each seed $(\x,\F)$, we define a collection $\{\hF_1,\hF_2,\ldots,\hF_n\} \subset \L$ of {\it exchange Laurent polynomials} by the conditions:
\begin{itemize}
\item
$\hF_j = x_1^{a_1}\cdots \widehat{x_j} \cdots x_n^{a_n} F_j$ for some $a_1,\ldots,a_{j-1},a_{j+1},\ldots,a_n \in \Z_{\leq 0}$
\item for $i \neq j$ we have that
\begin{align}\label{hatF}
&\hF_i|_{x_j \leftarrow F_j/x} \text{ lies in } S[x_1^{\pm 1},\ldots,x_{j-1}^{\pm 1},x^{\pm 1},x_{j+1}^{\pm 1},\ldots,x_n^{\pm 1}] \; \mbox{and,}\\
&\mbox{as an element of this ring, is not divisible by $F_j$.} \nonumber
\end{align}
\end{itemize}

The well-definedness of $\hF_i$ follows from the following lemma.
\begin{lemma}
Let $F(x) \in S[x^{\pm 1}]$ be a Laurent polynomial in $x$ with coefficients in a unique factorization domain $S$.  Let $P$ be an irreducible element of $S$.  Then there is a unique integer $m \in \Z$ so that $G(x) = x^m F(x)$ satisfies the following two properties:
\begin{enumerate}
\item
$G(P/x) \in S[x^{\pm 1}]$
\item
$G(P/x)$ is not divisible by $P$
\end{enumerate}
\end{lemma}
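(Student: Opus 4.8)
Write $F(x) = \sum_{k} c_k x^k \in S[x^{\pm 1}]$, a finite sum with $c_k \in S$. For an integer $m$, set $G(x) = x^m F(x) = \sum_k c_k x^{m+k}$, so that $G(P/x) = \sum_k c_k P^{m+k} x^{-(m+k)}$. For this to lie in $S[x^{\pm 1}]$ we need every exponent $m+k$ with $c_k \neq 0$ to satisfy $m+k \geq 0$, i.e. $m \geq -k_{\min}$ where $k_{\min}$ is the lowest exponent actually occurring in $F$; equivalently, $G(P/x) \in S[x^{\pm 1}]$ if and only if $x^{k_{\min}} F(x)$ divides $x^m F(x)$ in $S[x^{\pm 1}]$, i.e. $m \geq k_{\min}$... more cleanly: condition (1) holds exactly when $m \geq -d$ where $d$ is the order of vanishing, i.e. the largest $d$ with $x^d \mid F(x)$ in $S[x^{\pm 1}]$ (allowing negative $d$). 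So the set of $m$ satisfying (1) is a set of the form $\{m : m \geq m_0\}$ for an explicit $m_0 \in \Z$.

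First I would reduce to the case where $F(x) \in S[x]$ is an honest polynomial with nonzero constant term: multiplying $F$ by a power of $x$ shifts $m$ by the same amount and changes neither property (1) nor (2), so we may assume $F(x) = c_0 + c_1 x + \cdots + c_N x^N$ with $c_0 \neq 0$. Then condition (1) becomes precisely $m \geq 0$. For such $m$, $G(P/x) = \sum_{k=0}^N c_k P^{m+k} x^{-(m+k)} = x^{-(m+N)}\bigl(c_0 P^m x^N + c_1 P^{m+1} x^{N-1} + \cdots + c_N P^{m+N}\bigr)$; divisibility of this element of $S[x^{\pm 1}]$ by $P$ is the same as divisibility of the bracketed polynomial $H_m(x) := \sum_{k=0}^N c_k P^{m+k} x^{N-k}$ by $P$ in $S[x]$, which (since $P$ is irreducible in the UFD $S$, hence prime, hence its extension to $S[x]$ is prime) happens iff $P$ divides every coefficient $c_k P^{m+k}$. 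So $P \mid H_m(x)$ iff $P \mid c_k P^{m+k}$ for all $k$, equivalently iff $m + k \geq 1$ whenever $P \nmid c_k$, i.e. iff $m \geq 1 - k^\ast$ where $k^\ast := \min\{k : P \nmid c_k\}$ (note $k^\ast$ is well-defined: if $P$ divided all $c_k$ then $P$ would divide $F$, but actually we don't even need to exclude that — if no such $k$ exists set $k^\ast = +\infty$ and then property (2) fails for every $m$; I should check whether the intended hypotheses force $k^\ast$ finite, but in any case the argument below locates the unique $m$ among those where (2) can hold).

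Combining: property (1) holds iff $m \geq 0$; property (2) fails iff $m \geq 1 - k^\ast$, so property (2) holds iff $m \leq -k^\ast$. Hence both hold iff $0 \leq m \leq -k^\ast$, which has at most one solution, and exactly one solution iff $k^\ast = 0$, i.e. iff $P \nmid c_0$ — that is, iff $P$ does not divide the constant term of the reduced polynomial $F$, equivalently (back in the original normalization) iff $P$ does not divide $F$ in $S[x^{\pm 1}]$, equivalently... actually the cleanest uniform statement: after the reduction the unique $m$ is $m = 0$ precisely when $P \nmid F$; in general the unique integer is $m = -k^\ast - (\text{the shift used in the reduction})$. To get existence \emph{and} uniqueness unconditionally I would remark that one should clear from $F$ not just powers of $x$ but also the highest power of $P$ dividing $F$ — but $P \in S$ and $F \in S[x^{\pm 1}]$, and if $P \mid F$ then $F/P$ still lies in $S[x^{\pm 1}]$, so replacing $F$ by $F/P^r$ with $r$ maximal we may further assume $P \nmid F$, which forces $k^\ast = 0$ and pins down $m = 0$ uniquely; since this replacement changes neither property, the statement follows.

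The only real subtlety is the step asserting that $P$ prime in $S$ implies $P$ divides a polynomial in $S[x]$ iff it divides all coefficients; this is the standard fact that $S[x]/(P) \cong (S/P)[x]$ is a domain when $S/P$ is, so $(P)$ is prime in $S[x]$ — I expect no difficulty there, it is the one place the UFD (or at least that $P$ is prime) hypothesis is genuinely used. The rest is bookkeeping with exponents; the main thing to get right is the normalization/reduction so that existence is visibly guaranteed and the window $0 \le m \le -k^\ast$ collapses to a single point.
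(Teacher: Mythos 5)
There is a genuine error in your analysis of condition (1). From $G(P/x)=\sum_k c_k P^{m+k}x^{-(m+k)}$ you conclude that membership in $S[x^{\pm 1}]$ forces $m+k\geq 0$ for every $k$ in the support of $F$; but negative powers of $x$ are perfectly allowed in $S[x^{\pm1}]$ --- the only obstruction is a negative power of $P$, and a term $c_kP^{m+k}$ with $m+k<0$ still lies in $S$ whenever $P^{-(m+k)}$ divides $c_k$. The correct criterion is $m+k+v_P(c_k)\geq 0$ for all $k$ in the support (where $v_P$ denotes the $P$-adic valuation in the UFD $S$), not $m+k\geq 0$. Your later patch --- dividing $F$ by the largest power of $P$ dividing it --- does not repair this, because $P\nmid F$ only guarantees that \emph{some} coefficient is prime to $P$, not the constant term; that is, it forces $k^\ast<\infty$, not $k^\ast=0$. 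Concretely, take $S=\Z$, $P=2$, $F(x)=2+x$: here $P\nmid F$ and $k^\ast=1$, so your window $0\leq m\leq -k^\ast$ is empty, yet the unique valid $m$ is $m=-1$, since $x^{-1}F(x)\big|_{x\leftarrow 2/x}=x+1$ satisfies both properties. The two thresholds you compute separately should be merged into the single quantity $\min_k\bigl(k+v_P(c_k)\bigr)$: condition (1) holds iff $m\geq-\min_k\bigl(k+v_P(c_k)\bigr)$, while condition (2) holds iff $m\leq-\min_k\bigl(k+v_P(c_k)\bigr)$, which gives existence and uniqueness simultaneously.

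The paper's own proof sidesteps this bookkeeping: after reducing to $F\in S[x]$, it observes that $F(P/x)$ is automatically a Laurent polynomial, lets $r$ be the maximal power of $P$ dividing $F(P/x)$ in $S[x^{\pm1}]$, and sets $m=-r$; since replacing $m$ by $m\pm1$ multiplies $G(P/x)$ by $(P/x)^{\pm1}$, any larger $m$ violates (2) and any smaller $m$ violates (1). You should either adopt that formulation or correct your exponent criterion as indicated above; the rest of your argument (in particular the use of primality of $P$ to test divisibility coefficientwise) is fine.
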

\begin{proof}
We may assume that $F$ is a polynomial in $x$.  Then $F(P/x) \in S[x^{\pm 1}]$.  Let $m$ be negative of the maximal power of $P$ that divides $F(P/x)$.  Then clearly $G(x) = x^m F(x)$ satisfies both (1) and (2) and this value of $m$ is unique.  
\end{proof}

\begin{lemma}
The collections $\{F_1,\ldots,F_n\}$ and $\{\hF_1,\ldots,\hF_n\}$ determine each other uniquely.
\end{lemma}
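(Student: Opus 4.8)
The plan is to prove the two directions of the bijection separately. Given the exchange polynomials $\{F_1,\ldots,F_n\}$, the exchange Laurent polynomials $\{\hF_1,\ldots,\hF_n\}$ are determined: for each fixed $i$, I want to recover the monomial $M_i = x_1^{a_1}\cdots\widehat{x_i}\cdots x_n^{a_n}$ with $a_j \in \Z_{\leq 0}$ such that $\hF_i = M_i F_i$ satisfies \eqref{hatF}. Since $M_i$ is a monomial in the $x_j$ with $j \neq i$, the exponents $a_j$ can be determined one variable at a time: fixing $j \neq i$ and treating $F_i$ as a Laurent polynomial in $x_j$ with coefficients in the UFD $S[x_k^{\pm 1} : k \neq i,j]$, the previous Lemma (applied with $P = F_j$, which is irreducible in this larger UFD by (LP1), or rather irreducible after we check it remains irreducible — see below) gives a unique integer exponent $a_j$ for which $(x_j^{a_j} F_i)|_{x_j \leftarrow F_j/x}$ lies in the Laurent ring and is not divisible by $F_j$. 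One must check that these choices for different $j$ are compatible, i.e. that doing the substitution and the power-extraction for one variable does not disturb the condition already arranged for another; this follows because the substitution $x_j \leftarrow F_j/x$ only affects the $x_j$-degree, and $F_j$ does not involve $x_j$, so the extractions for distinct variables act on independent ``coordinates.'' The nonnegativity constraint $a_j \leq 0$ is automatic since $F_i$ is a genuine polynomial (not involving negative powers), and the extracted power of $F_j$ from $F_i(F_j/x)$ is nonnegative.

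Conversely, given $\{\hF_1,\ldots,\hF_n\}$, I recover each $F_i$ as follows: $F_i$ is, up to multiplication by a monomial in the $x_j$ ($j\neq i$), the unique Laurent polynomial $\hF_i$, and among all monomial multiples of $\hF_i$ there is a unique one that is an honest polynomial in $S[x_1,\ldots,x_n]$, not divisible by any variable $x_j$, and not involving $x_i$. The last two properties (not involving $x_i$, not divisible by any $x_j$) pin down the monomial uniquely: write $\hF_i = (\prod_{j\neq i} x_j^{b_j}) G$ where $G \in S[x_1,\ldots,x_n]$ is not divisible by any $x_j$ and does not involve $x_i$ — this is possible since $\hF_i$ is obtained from such a polynomial $F_i$ times a monomial — and then $G \propto F_i$ forces $G = F_i$ up to a unit in $S$. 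Since seeds are only defined up to the equivalence $\propto$ implicitly (or one fixes representatives), this gives the claimed uniqueness.

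I expect the main subtlety to be the irreducibility bookkeeping: the well-definedness Lemma is stated for $F(x) \in S[x^{\pm 1}]$ with $P$ irreducible in $S$, but in applying it to extract the $x_j$-power from $F_i$ I am working over the coefficient ring $S' = S[x_k^{\pm 1} : k\neq i,j]$, and I need $F_j$ to be irreducible (or at least prime, or at least a well-defined ``unit-free'' element up to which I can extract a highest power) in $S'[x_j]$ or rather in $S'$ after the substitution — note that after $x_j \leftarrow F_j/x$ the relevant ring is $S[x_k^{\pm 1}:k\neq i,j][x^{\pm1}]$ and $F_j$ (not involving $x_j$) sits inside it. The point is that $F_j$ is irreducible in $S[x_1,\ldots,x_n]$ and does not involve $x_j$, hence it is irreducible in $S[x_k : k \neq j]$, hence, after inverting the variables $x_k$ with $k \neq i,j$ (which only kills the $x_k$-monomial factors, of which there are none by (LP1)), it remains irreducible — or a unit, which it is not since it is a non-constant polynomial or has non-unit content. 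So the Lemma applies with $P = F_j$ over the UFD $S[x_k^{\pm1} : k \neq i,j]$, and the argument goes through. The remaining steps — compatibility of the per-variable extractions and the reconstruction of the monomial in the converse direction — are then routine manipulations with monomial degrees.
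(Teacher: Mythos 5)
Your proof is correct and takes the same (essentially only possible) route as the paper, whose own proof is simply ``Immediate from the definitions'': the forward direction is the preceding well-definedness lemma applied variable by variable (the conditions for distinct $j$ being independent since the remaining monomial factors are units in the relevant Laurent ring), and the converse is the observation that $F_i$ is the unique monomial multiple of $\hF_i$ that is a polynomial not divisible by any variable. One negligible bookkeeping slip: the coefficient ring for extracting the power of $F_j$ should be $S[x_k^{\pm 1}: k \neq j]$, i.e.\ it must include $x_i$, since $F_j$ may involve $x_i$ even though $F_i$ does not; your irreducibility argument goes through verbatim over that ring.
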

\begin{proof}
The definition of the $\hF_i$-s tells us how to uniquely obtain them from the $F_i$-s. For the reverse direction, we simply drop the denominators of $\hF_i$-s to recover the $F_i$-s. 

More formally, we have that 
\begin{equation}\label{eq:FihFi}
F_i = x_1^{b_1}\cdots \widehat{x_i} \cdots x_n^{b_n} \hF_i
\end{equation} where $b_j \in \Z_{\geq 0}$ are the minimal nonnegative integers such that $F_i$ is a genuine (not Laurent) polynomial. Indeed, by definition the $\hF_i$-s are obtained from the polynomials $F_i$-s by dividing by some nonnegative powers of $x_j$'s. We need to compensate for this division when we go in the other direction. If we chose larger values of $b_j$ in \eqref{eq:FihFi}, then $F_i$ would not be irreducible, contradicting (LP1).  If we chose smaller values of $b_j$ in \eqref{eq:FihFi}, then $F_i$ would not be a genuine polynomial.  Thus the integers $b_j$ in \eqref{eq:FihFi} are uniquely determined.
\end{proof}

Let us pause now and consider an example.
\begin{example} \label{ex:1}
Let $S = \Z$ and $\FF = \Q(a,b,c)$.  Consider the seed
$$t= \{(a, b+1), (b, (a+1)^2+c^2), (c, b^2+b+a^3+a^2)\}.$$
Then $\hF_a=F_a$ since both $F_b$ and $F_c$ depend on $a$, and thus $b$ and $c$ cannot appear in the denominator of $\hF_a$ with a non-zero exponent. Similarly, $\hF_b=F_b$. For the same reason $\hF_c$ does not have any $b$-s in its denominator. 
To compute the exponent of $a$ in the denominator of $\hF_c$ we make the substitution $a \longleftarrow \frac{b+1}{a'}$ in $F_c$, obtaining 
$$b(b+1)+\frac{(b+1)^2(b+1+a')}{(a')^3}.$$ The maximal power of $b+1$ that divides this is $(b+1)^1$, and therefore $\hF_c = F_c/a$.
\end{example}

\begin{lemma}\label{L:subs}
In \eqref{hatF}, the substitution $x_j \leftarrow F_j/x$ can be replaced by $x_j \leftarrow \hF_j/x$ without changing the condition.  
Similarly, in \eqref{hatF} we can test divisibility by $\hF_j$ instead of by $F_j$.
\end{lemma}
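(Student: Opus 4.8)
The plan is to unpack the definition of $\hF_j$ from \eqref{hatF} and observe that the two claimed substitutions and divisibility tests differ only by a monomial in the remaining variables, which is a unit in the relevant Laurent polynomial ring. Concretely, by the defining conditions we have $\hF_j = M_j F_j$ where $M_j = x_1^{a_1}\cdots \widehat{x_j}\cdots x_n^{a_n}$ is a monomial in the variables $x_k$, $k \neq j$, with exponents in $\Z_{\leq 0}$. The point is that $M_j$ is a unit in $S[x_1^{\pm 1},\ldots,x_{j-1}^{\pm 1},x^{\pm 1},x_{j+1}^{\pm 1},\ldots,x_n^{\pm 1}]$, and moreover $M_j$ does not involve $x_j$, so the operations of substituting $x_j \leftarrow F_j/x$ (or $x_j \leftarrow \hF_j/x$) commute with multiplication by $M_j$ in the evident way.

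First I would verify the substitution claim. Write $\hF_i = M_i F_i$ as above. Since $M_i$ does not involve $x_j$ (when $i \neq j$; the case $i = j$ being trivial as $\hF_j|_{x_j \leftarrow \ast}$ is not what is being tested), we have
\begin{equation*}
\hF_i|_{x_j \leftarrow \hF_j/x} = M_i \cdot \bigl(F_i|_{x_j \leftarrow \hF_j/x}\bigr) = M_i \cdot \bigl(F_i|_{x_j \leftarrow M_j F_j/x}\bigr).
\end{equation*}
Now $F_i$ is a polynomial, and substituting $M_j F_j / x$ for $x_j$ versus substituting $F_j / x'$ for $x_j$ (thinking of $x' = x/M_j$) differ by replacing each occurrence of the new variable by a unit multiple of itself; since $M_j$ is a unit in the target ring, membership in $S[x_1^{\pm 1},\ldots,x^{\pm 1},\ldots,x_n^{\pm 1}]$ is unaffected. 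More carefully: if $\hF_i|_{x_j \leftarrow F_j/x}$ lies in the Laurent ring, then replacing $x$ by $x/M_j$ (a unit substitution, hence an automorphism of the Laurent ring fixing $S$ and the other variables) shows $\hF_i|_{x_j \leftarrow M_j F_j/x} = \hF_i|_{x_j\leftarrow \hF_j/x}$ lies in it too, and conversely. So the Laurentness condition is equivalent under the two substitutions.

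Next I would check that divisibility by $F_j$ and by $\hF_j$ agree. In the Laurent polynomial ring $S[x_1^{\pm 1},\ldots,x^{\pm 1},\ldots,x_n^{\pm 1}]$, the elements $F_j$ and $\hF_j = M_j F_j$ differ by the unit $M_j$, hence generate the same ideal; so an element is divisible by one iff it is divisible by the other. Combined with the substitution equality from the previous step, this shows "$\hF_i|_{x_j \leftarrow \hF_j/x}$ is not divisible by $\hF_j$" is equivalent to "$\hF_i|_{x_j\leftarrow F_j/x}$ is not divisible by $F_j$", which is exactly the condition in \eqref{hatF}. This completes the argument.

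I do not expect any real obstacle here — the lemma is essentially a bookkeeping observation that a monomial prefactor in the other variables is invertible in the ambient Laurent ring and commutes with the substitution. The only mild care needed is to be explicit that the substitution $x \leftarrow x/M_j$ is a well-defined automorphism of $S[x_1^{\pm 1},\ldots,x^{\pm 1},\ldots,x_n^{\pm 1}]$ (because $M_j$ is a Laurent monomial in the variables $x_k$, $k\neq j$, all of which remain invertible), and that it carries one substituted expression to the other. Everything else is immediate from $\hF_j \propto F_j$ up to a monomial unit.
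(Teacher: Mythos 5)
Your proof is correct and rests on the same key fact as the paper's: $\hF_j/F_j$ is a Laurent monomial in the variables other than $x_j$, hence a unit in the target Laurent ring that does not interact with $x$ or with $F_j$. The paper phrases this by comparing coefficients of the substituted expression as a Laurent polynomial in $x$, while you package it as the unit-rescaling automorphism $x \mapsto x/M_j$; these are the same observation in different clothing.
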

\begin{proof}
We begin by noting that $F_j$ and $\hF_j$ do not depend on $x$. 

Consider an arbitrary Laurent polynomial $P \in S[x_1^{\pm 1},\ldots,x_{j-1}^{\pm 1},x_j^{\pm 1},x_{j+1}^{\pm 1},\ldots,x_n^{\pm 1}]$.  If $F_j$ divides $T(x) = P|_{x_j \leftarrow F_j/x}$ in $S[x_1^{\pm 1},\ldots,x_{j-1}^{\pm 1},x^{\pm 1},x_{j+1}^{\pm 1},\ldots,x_n^{\pm 1}]$, then it divides all the coefficients $c_r$ of 
$T(x) = \sum_{r} c_r x^r$ as a Laurent polynomial in $x$.  Let $F_j^p$ be the maximal power of $F_j$ that divides $T(x)$.  Then $p = \min_r p_r$, where $F_j^{p_r}$ is the maximal power of $F_j$ that divides $c_r$.

Now consider $T'(x) = P|_{x_j \leftarrow \hF_j/x} = \sum_{r} c'_r x^r$.  Since $F_j$ and $\hF_j$ differ by (multiplication by) a unit not involving $x$ in $S[x_1^{\pm 1},\ldots,x_{j-1}^{\pm 1},x^{\pm 1},x_{j+1}^{\pm 1},\ldots,x_n^{\pm 1}]$, we have that $c'_r/c_r$ is a unit in $S[x_1^{\pm 1},\ldots,x_{j-1}^{\pm 1},x_{j+1}^{\pm 1},\ldots,x_n^{\pm 1}]$.  Suppose that $F_j^{p'_r}$ is the maximal power of $F_j$ that divides $c'_r$.  Then $p'_r = p_r$.  It follows that $F_j^p$ is the maximal power of $F_j$ that divides $T'(x)$.  This proves the first statement.

The second statement also follows from the fact that $\hF_j$ and $F_j$ differ multiplicatively by a unit in $S[x_1^{\pm 1},\ldots,x_{j-1}^{\pm 1},x^{\pm 1},x_{j+1}^{\pm 1},\ldots,x_n^{\pm 1}]$, and thus the maximal powers of each that divides any coefficient $c_j$ of $T(x)$ (or of $T'(x)$) are the same. 
\end{proof}

\begin{lemma}\label{L:depend}
Suppose that $F_j/\hF_j$ involves $x_i$.  Then $F_i$ does not use the variable $x_j$.
\end{lemma}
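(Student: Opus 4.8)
The plan is to translate the hypothesis into a statement about a single exponent, pin that exponent down using the defining properties of the exchange Laurent polynomials, and then reach a contradiction from (LP1)--(LP2). Write $\hF_j=x_1^{a_1}\cdots\widehat{x_j}\cdots x_n^{a_n}\,F_j$ with all $a_k\in\Z_{\le 0}$, as in the first bullet defining $\{\hF_1,\dots,\hF_n\}$. Then ``$F_j/\hF_j$ involves $x_i$'' means exactly $a_i\le -1$, so it suffices to prove the contrapositive: \emph{if $F_i$ involves $x_j$, then $a_i=0$.}

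First I would fix the right ring: $R:=S[x_1^{\pm 1},\dots,x_{i-1}^{\pm 1},x^{\pm 1},x_{i+1}^{\pm 1},\dots,x_n^{\pm 1}]$, the Laurent ring appearing in \eqref{hatF}. Since $S$ is a UFD and $F_i$ is irreducible in $\P$ and divisible by no variable (LP1), a routine localization argument shows $F_i$ is a prime element of $R$, so the $F_i$-adic valuation $v_{F_i}$ is defined on $R\setminus\{0\}$; note also that $F_i$ does not involve $x_i$ by (LP2), so the substitution $x_i\leftarrow F_i/x$ makes sense. Writing $F_j=\sum_{e\ge 0}b_e x_i^e$ as a polynomial in $x_i$ and substituting into $\hF_j=x_i^{a_i}\big(\prod_{k\ne i,j}x_k^{a_k}\big)F_j$ gives $\hF_j|_{x_i\leftarrow F_i/x}=u\,F_i^{a_i}\,T$ in $\Frac(R)$, where $u\in R^\times$ is a monomial and $T:=F_j|_{x_i\leftarrow F_i/x}=\sum_{e\ge 0}b_e F_i^e x^{-e}\in R$. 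The second defining bullet, read with the roles of $i$ and $j$ exchanged, says that $\hF_j|_{x_i\leftarrow F_i/x}$ lies in $R$ and is not divisible by $F_i$; since $u$ is a unit, this is precisely the statement $v_{F_i}(T)=-a_i$ (which is also what the well-definedness Lemma records).

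Now assume $F_i$ involves $x_j$; I would show $v_{F_i}(T)=0$, forcing $a_i=0$. By (LP1), $F_j$ is not divisible by $x_i$, so its $x_i$-constant term $b_0$ is nonzero; by (LP2), $F_j$ is free of $x_j$, hence so is $b_0$. From $T=b_0+F_i\cdot\big(\sum_{e\ge 1}b_e F_i^{e-1}x^{-e}\big)$ we get $T\equiv b_0\pmod{F_i}$ in $R$. But $F_i$ involves $x_j$ while $0\ne b_0$ does not, so $F_i\nmid b_0$ in $R$: clearing denominators and using that $F_i$ is prime and divides no monomial reduces $F_i\mid b_0$ to the impossibility $F_i\mid b_0$ in $S[x_k:k\ne i]$, which fails on comparison of $x_j$-degrees. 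Hence $F_i\nmid T$, i.e.\ $v_{F_i}(T)=0$, so $a_i=0$, as required.

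The argument is short; the only steps needing genuine care are the ring-theoretic ones — identifying which Laurent ring the substitution $x_i\leftarrow F_i/x$ lands in, verifying that $F_i$ remains prime there (used both in the valuation reformulation of the second bullet and in the ``clear denominators'' step), and the bookkeeping that converts that bullet into the clean identity $v_{F_i}\!\big(F_j|_{x_i\leftarrow F_i/x}\big)=-a_i$. Once these are in place, the heart of the matter is simply that $F_j|_{x_i\leftarrow F_i/x}\equiv b_0\pmod{F_i}$, where $b_0$ is a nonzero polynomial free of $x_j$ by (LP1)--(LP2).
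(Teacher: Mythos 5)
The paper states Lemma \ref{L:depend} without proof, so there is nothing to compare against; your argument is correct and is surely the intended one. Its whole content is your final observation: writing $F_j=\sum_{e\ge 0}b_e x_i^e$, the substitution $x_i\leftarrow F_i/x$ yields an element congruent to $b_0$ modulo $F_i$, where $b_0\ne 0$ by (LP1) and $b_0$ is free of $x_j$ by (LP2), so if $F_i$ involved $x_j$ it could not divide $b_0$, forcing $a_i=-v_{F_i}\bigl(F_j|_{x_i\leftarrow F_i/x}\bigr)=0$. The ring-theoretic care you take --- that $F_i$ stays prime in the Laurent ring and that the defining condition \eqref{hatF} translates into that valuation identity --- is exactly what is needed to make the paper's ``maximal power of $P$ dividing $F(P/x)$'' bookkeeping rigorous.
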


\begin{proof}
The fact that $F_j/\hF_j$ involves $x_i$ means that there is a non-trivial power of $F_i$ that divides $F_j|_{x_i \leftarrow F_i/x}$. Indeed, by the definition \eqref{hatF} of $\hF_j$ this power is exactly the power of $x_i$ in $F_j/\hF_j$.

Now by (LP1), $F_j$ is not divisible by $x_i$, and has a non-zero constant term $c = F_j|_{x_i \leftarrow 0}$ when viewed as a polynomial in $x_i$.  Since $F_i$ divides $F_j|_{x_i \leftarrow F_i/x}$, it also divides $c$.  Since $F_j$ does not depend on $x_j$, the constant term $c$ also does not depend on $x_j$, and we conclude that $F_i$ does not depend on $x_j$.
\end{proof}

\begin{example}
In Example \ref{ex:1}, the ratio ${F_c}/{\hF_c} = a$ involves $a$, and indeed we see that $c$ does not appear in $F_a = b+1$.  This agrees with Lemma \ref{L:depend}.
\end{example}

We may think of $\F$ and ${\bf \hF}$ as two different normalizations for the tuple of exchange polynomials.  They are defined up to a monomial product in the $x_i$'s.  
The set $\F$ consists of the unique representatives which are polynomials not divisible by any variable.  
The elements of ${\bf \hF}$ are the unique representatives satisfying \eqref{hatF}.

\begin{lemma} \label{lem:hcrit}
 If $F_i \not = F_j$, the exponent $a_i$ in the definition of $\hF_j$ is maximal such that $\hF_j$ is a Laurent polynomial in $\L(\mu_i(\x,\F))$.
\end{lemma}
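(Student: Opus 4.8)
The plan is to unwind the two conditions defining $\hF_j$ and reinterpret the exponent $a_i$ (the power of $x_i$ appearing in $\hF_j$) directly in terms of membership in the mutated Laurent ring $\L(\mu_i(\x,\F))$. Recall that $\mu_i(\x,\F)$ replaces $x_i$ by $x'_i = \hF_i/x_i$, so $\L(\mu_i(\x,\F)) = S[x_1^{\pm 1},\ldots,\widehat{x_i^{\pm1}},\ldots,x_n^{\pm1},(x'_i)^{\pm1}]$, and $x_i = \hF_i/x'_i$ in this ring. Thus asking whether $\hF_j \in \L(\mu_i(\x,\F))$ amounts to asking for which powers of $x_i$ the substitution $x_i \leftarrow \hF_i/x'_i$ keeps $\hF_j$ a Laurent polynomial; since $\hF_j$ does not involve $x_i$ except through the monomial factor $x_i^{a_i}$, and since $x'_i$ is invertible, $\hF_j = x_i^{a_i}(\cdots)$ lies in $\L(\mu_i(\x,\F))$ precisely when $a_i \geq 0$ would force no condition, but $a_i \leq 0$ — so the issue is really the \emph{lower} bound on $a_i$ coming from the non-divisibility requirement, which I will now phrase as a maximality statement.

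First I would observe that condition \eqref{hatF} for the index $j$ — applied with the roles reversed, i.e. substituting $x_i \leftarrow \hF_i/x$ into $\hF_j$ (legitimate by Lemma \ref{L:subs}) — says exactly that $\hF_j|_{x_i \leftarrow \hF_i/x}$ is a Laurent polynomial not divisible by $\hF_i$. Write $\hF_j = x_i^{a_i}\, G$ where $G \in S[x_1^{\pm1},\ldots,\widehat{x_i},\ldots,x_n^{\pm1}][x_i]$ has no further factor of $x_i$ and in particular $G|_{x_i \leftarrow 0} \neq 0$ (here I use $F_i \neq F_j$, hence $\hF_i \neq \hF_j$, so that Lemma \ref{L:depend} and condition (LP1) guarantee $\hF_j$ genuinely has a nonzero constant term in $x_i$; this is the only place the hypothesis $F_i \neq F_j$ enters). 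Then $\hF_j|_{x_i \leftarrow \hF_i/x} = (\hF_i/x)^{a_i}\, G(\hF_i/x)$. Since $G(\hF_i/x)$ has a nonzero term not divisible by $\hF_i$ (namely the image of the constant term $G|_{x_i\leftarrow 0}$, which is a Laurent polynomial in the remaining variables and is not divisible by $\hF_i$ because $\hF_i$ is irreducible and involves $x_i$... wait, $\hF_i$ involves $x_i$ but $G|_{x_i\leftarrow0}$ does not, so indeed $\hF_i \nmid G|_{x_i\leftarrow0}$ unless $G|_{x_i\leftarrow0}=0$), the exact power of $\hF_i$ dividing $\hF_j|_{x_i \leftarrow \hF_i/x}$ is exactly $a_i$ (for $a_i \le 0$ this is a negative power, i.e. a pole). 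The non-divisibility clause of \eqref{hatF} thus forces $a_i \leq 0$ to be as large as possible subject to $\hF_j|_{x_i \leftarrow \hF_i/x}$ still being an honest Laurent polynomial — that is, subject to no negative power of $\hF_i$ surviving, which is the same as $\hF_j \in \L(\mu_i(\x,\F))$ after clearing $x_i = \hF_i/x'_i$.

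To finish, I would package this as: increasing $a_i$ by one (replacing $\hF_j$ by $x_i \hF_j$) would make $\hF_j|_{x_i\leftarrow \hF_i/x}$ acquire the factor $\hF_i$ in the denominator to one higher power... no — increasing $a_i$ toward $0$ is what non-divisibility already chose; the point is that $a_i$ is \emph{maximal} among integers $\le 0$ for which $\hF_j$ remains in $\L(\mu_i)$, and it cannot be positive because $\hF_j$ is, up to the $x_i$-monomial, equal to $x_1^{a_1}\cdots\widehat{x_j}\cdots x_n^{a_n}F_j$ with $F_j$ a polynomial not divisible by any variable (in particular not by $x_i$), so $a_i = 0$ would already require $F_j$ itself to have the right non-divisibility — consistent — and $a_i > 0$ is impossible since then $\hF_j$ would be divisible by $x_i$, contradicting that $\hF_j|_{x_i\leftarrow\hF_i/x}$ is not divisible by $\hF_i$ together with irreducibility. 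The cleanest route is simply: by Lemma \ref{L:subs} the defining condition of $\hF_j$ is that $a_i$ is the unique integer with $\hF_j|_{x_i\leftarrow \hF_i/x} \in S[x^{\pm1},\ldots]$ and not divisible by $\hF_i$; translating $x = x'_i$ and $\hF_i/x = x_i$, this is precisely the statement that $\hF_j \in \L(\mu_i(\x,\F))$ and that raising $a_i$ is obstructed by the appearance of a pole in $x'_i$, i.e. $a_i$ is maximal.

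The main obstacle, I expect, is purely bookkeeping: carefully tracking that the constant-in-$x_i$ term of $\hF_j$ is nonzero and not killed or made $\hF_i$-divisible, and cleanly converting "exact power of $\hF_i$ dividing the substituted expression" into "membership of $\hF_j$ in $\L(\mu_i(\x,\F))$." There is no deep idea here — it is a rephrasing of \eqref{hatF} via Lemma \ref{L:subs} — but one must be attentive to the sign conventions ($a_i \le 0$) and to the single use of the hypothesis $F_i \neq F_j$, without which $\hF_j$ could be a unit multiple of $\hF_i$ and the argument about the nonzero constant term would collapse.
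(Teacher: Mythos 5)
There is a genuine gap: you have the direction of the extremality backwards, and the intermediate claims you use to force it into a ``maximality'' statement are false. Write $\hF_j = x_i^{a}\prod_{k\neq i,j}x_k^{a_k}F_j$ and ask for which $a$ this lies in $\L(\mu_i(\x,\F))$. Since $x_i = \hF_i/x'_i$ with $x'_i$ a unit and $\hF_i$ an element of $\L(\mu_i(\x,\F))$ that is \emph{not} a unit, the set of admissible $a$ is an interval unbounded above (multiplying by more copies of $x_i$ never leaves the ring) and bounded below; the content of the lemma is that $a_i$ is the \emph{lower} endpoint, i.e.\ $-a_i$ is the largest power of $x_i$ one can divide by. This is exactly how the lemma is invoked in the proof of Proposition \ref{P:Q0} (``$k-d$ is the largest power of $\frac{Q}{x_1}$ you can divide it by'') and in the worked example following the definition of $\hF$. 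Your proof instead asserts that \emph{raising} $a_i$ is obstructed ``by the appearance of a pole in $x'_i$'' --- but $x'_i$ is invertible in $\L(\mu_i(\x,\F))$, so no such obstruction exists. The genuine obstruction is to \emph{lowering} $a_i$: dividing $\hF_j$ by one more $x_i$ multiplies it by $x'_i/\hF_i$, and the result lies in $\L(\mu_i(\x,\F))$ if and only if $\hF_i$ divides $\hF_j|_{x_i\leftarrow \hF_i/x'_i}$ there, which is precisely what the second clause of \eqref{hatF} (via Lemma \ref{L:subs}) forbids. That two-line observation, together with the first clause of \eqref{hatF} giving membership at $a=a_i$, is the whole proof.

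Several supporting claims in your write-up are also incorrect and would sink the argument even if the direction were right. You assert that $\hF_i$ involves $x_i$; it cannot, by (LP2). You assert that $\hF_i\nmid G|_{x_i\leftarrow 0}$; the paper's own example refutes this --- there $F_a=b+1$ divides $F_c|_{a\leftarrow 0}=b(b+1)$, which is exactly why $\hF_c=F_c/a$ rather than $F_c$. You conclude that ``the exact power of $\hF_i$ dividing $\hF_j|_{x_i\leftarrow \hF_i/x}$ is exactly $a_i$''; by \eqref{hatF} that power is $0$ (the expression is a Laurent polynomial not divisible by $F_i$), and your computation, if it were valid, would force $a_i=0$ always. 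Finally, the hypothesis $F_i\neq F_j$ is not needed where you place it: $G|_{x_i\leftarrow 0}\neq 0$ is just (LP1) for $F_j$ (non-divisibility by $x_i$), with no reference to $F_i$. The paper offers no written argument to compare against (its proof is ``clear from the definitions''), but the definitional unwinding you attempt needs to be redone with the inequality pointing the other way.
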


\begin{proof}
The largest power of $F_i$ that divides $F_j|_{x_i \leftarrow F_i/x}$ is clearly the largest $a_i$ such that ${F_j}/{x_i^{a_i}}$ remains Laurent polynomial after the substitution $x_i \leftarrow F_i/x$. 
\end{proof}

\subsection{Mutations}
Suppose $i \in [n]$.  Then we say that a tuple $(\x',\F')$ is obtained from a seed $(\x,\F)$ by mutation at $i$, and write $(\x',\F')  = \mu_i(\x,\F)$, if the former can be obtained from the latter by the following 
(non-deterministic) procedure.  

The cluster variables of $\mu_i(\x,\F)$ are given by $x'_i = \hF_i/x_i$ and $x'_j = x_j$ for $j \neq i$.  
The exchange polynomials $F'_j \in \L' := \L(t')$ are obtained from $F_j$ as follows.  First, we define $F'_i := F_i$.  For $j \neq i$ we have two cases.  
If $F_j$ does not depend on $x_i$, then $F_j$ is also an element of $\L'$, and we define $F'_j$ to be any polynomial satisfying $F'_j\propto F_j$, where $F'_j$ is now considered as an element of $\L'$.  

Now suppose $F_j$ does depend on $x_i$.  By Lemma \ref{L:depend}, $x_j$ cannot appear in the denominator $F_i/\hF_i$ of $\hF_i$, so $\hF_i|_{x_j \leftarrow 0}$ is well defined.  We define $G_j$ by
\begin{equation}\label{E:G}
G_j = F_j|_{x_i \leftarrow \frac{\hF_i|_{x_j \leftarrow 0}}{x'_i}}
\end{equation}
Next, we define $H_j$ to be $G_j$ with all common factors (in $S[x_1,\ldots,\hat x_i,\ldots,\hat x_j \ldots,x_n]$) with $\hF_i|_{x_j \leftarrow 0}$ removed.  
Note that this defines $H_j$ only up to a unit in $S$.  Finally, we define 
\begin{equation} \label{E:M}
F'_j = MH_j
\end{equation} 
where $M$ is a Laurent monomial in the $x'_1,x'_2,\ldots,\widehat{x'_j},\ldots, x'_n$ 
with coefficient a unit in $S$, such that $F'_j \in \P' = S[x_1', \ldots, x_n']$, satisfies (LP2), and is not divisible by any variable in $\P'$.  For any $H_j$, it is always possible to pick the monomial $M$ to satisfy these conditions, but in general there are many choices for the coefficient of $M$.  In particular $F'_j$ is defined only up to a unit in $S$.

\begin{example} \label{ex:2}
Consider the seed $$t= \{(a, b+1), (b, (a+1)^2+c^2), (c, b^2+b+a^3+a^2)\}$$ from Example \ref{ex:1}. Recall that $\hF_a=F_a, \hF_b=F_b$ and $\hF_c = F_c/a$. 
Let us see what happens when we mutate at $c$. The variable $c$ changes into
$$d = \frac{\hF_c}{c} = \frac{b^2+b+a^3+a^2}{ac}.$$ 
The exchange polynomial $F_a$ does not change (or changes only by a unit in $S = \Z$) since it does not depend on $c$. To compute the new $F_b$, we make the substitution $$c \longleftarrow \frac{\hF_c|_{b=0}}{d} = \frac{a(a+1)}{d}.$$ The result is 
$(a+1)^2+(\frac{a(a+1)}{d})^2$. Now we need to kill all common factors it has with $a(a+1)$, and change it to an irreducible polynomial by multiplying by a monomial. The first step kills 
the factor $(a+1)^2$, resulting in $1 + (\frac{a}{d})^2$, and the second step turns it into $a^2+d^2$. Thus, the resulting mutated seed can be chosen to be
$$\mu_c(t) = \{(a, b+1), (b, a^2+d^2), (d, b^2+b+a^3+a^2)\}.$$
One can verify from the definition that this is indeed a valid seed. 
\end{example}

We shall now show that if $(\x',\F') = \mu_i(\x,\F)$ is obtained by mutation of $(\x,\F)$ at $i$ then (LP1) is automatically satisfied, so $(\x',\F')$ is also a seed.  It is clear that if $\x$ is a transcendence basis of $\FF$ over $\Frac(S)$, then so is $\x'$.

\begin{lemma}\label{L:variablesinvolved}
Assume we are mutating at $i \in [n]$.  Then $F_j$ depends on $x_i$ if and only if $F'_j$ depends on $x'_i$.
\end{lemma}

\begin{proof}
If $F_j$ does not depend on $x_i$, then $F'_j \propto F_j$ does not depend on $x'_i$.  If $F_j$ depends on $x_i$, then $G_j$ must involve $x'_i$ since $\hF_i\mid_{x_j \leftarrow 0}$ is non-zero by (LP1).  But the remaining operations will not change the fact that $x'_i$ is involved (using also that $F_j$ is not divisible by $x_i$).
\end{proof}

\begin{example}
 Compare the two seeds 
 $$t= \{(a, b+1), (b, (a+1)^2+c^2), (c, b^2+b+a^3+a^2)\}$$
and
 $$\mu_c(t) = \{(a, b+1), (b, a^2+d^2), (d, b^2+b+a^3+a^2)\}$$
 from Example \ref{ex:2}. We see that before the mutation $F_a$ does not depend on $c$, while $F_b$ does. Similarly, after the mutation $F_a$ does not depend on $d$, while $F_b$ does.   This agrees with Lemma \ref{L:variablesinvolved}.
\end{example}

\begin{lemma}\label{L:hFi}
Assume we are mutating at $i \in [n]$.  Then $\hF_i = \hF'_i$.
\end{lemma}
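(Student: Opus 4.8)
The plan is to show that $\hF_i$, viewed as a Laurent polynomial in the variables $x_1,\dots,\widehat{x_i},\dots,x_n = x_1',\dots,\widehat{x_i'},\dots,x_n'$ (these clusters agree away from position $i$), already satisfies the two defining conditions of the exchange Laurent polynomial $\hF_i'$ in the mutated seed $(\x',\F')$; then by the uniqueness statement (the Lemma following \eqref{hatF}, applied coordinate-wise) we conclude $\hF_i = \hF_i'$. Note first that $F_i'$ and $F_i$ are exchange polynomials for the \emph{same} cluster variable up to normalization: $F_i$ does not involve $x_i$ by (LP2), hence does not "depend on $x_i$'' in the sense of the mutation rule, so $F_i' \propto F_i$ as elements of $\L' = \L$. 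Consequently $\hF_i$ and $\hF_i'$ are both obtained by multiplying the same polynomial (up to a unit) by a monomial in $\{x_j\}_{j\neq i}$, so it suffices to check that the monomial normalization is the same.

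The two conditions to verify for $\hF_i$ relative to the seed $(\x',\F')$ are: (a) for each $j \neq i$, the substitution $\hF_i|_{x_j' \leftarrow F_j'/x}$ lies in the appropriate Laurent ring and is not divisible by $F_j'$; and (b) $\hF_i = x_1'^{a_1}\cdots\widehat{x_i'}\cdots x_n'^{a_n} F_i'$ with the $a_k \in \Z_{\leq 0}$ — but (b) is automatic once we know $\hF_i/F_i'$ is a monomial with nonpositive exponents, which follows because $\hF_i/F_i = \hF_i/F_i'$ (up to a unit) has nonpositive exponents by the definition of $\hF_i$ in the original seed. So the crux is (a). Here I would split into two cases. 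If $F_j$ (equivalently $F_j'$, by Lemma \ref{L:variablesinvolved}) does not depend on $x_i$: then for $j \neq i$ the variable being substituted, $x_j' = x_j$, and $F_j' \propto F_j$ as a polynomial not involving $x_i$; the condition $\hF_i|_{x_j \leftarrow F_j/x} \in (\text{Laurent ring})$, not divisible by $F_j$, holds in the original seed, and since neither $\hF_i$ nor $F_j$ has changed and divisibility is testable after the substitution (Lemma \ref{L:subs}), it holds in the new seed. If $F_j$ depends on $x_i$: then by Lemma \ref{L:depend} applied in the original seed (the hypothesis is exactly that $F_i/\hF_i$ — more precisely the relevant factor — and the structure there), $F_i$ does not involve $x_j$, hence $\hF_i$ does not involve $x_j$, hence does not involve $x_j'$; so the substitution $x_j' \leftarrow F_j'/x$ does nothing to $\hF_i$, and the resulting element $\hF_i$ itself must be checked for non-divisibility by $F_j'$. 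Since $F_j'$ genuinely involves $x_i'$ (Lemma \ref{L:variablesinvolved}) while $\hF_i$ does not involve $x_i'$, and $F_j'$ is not a unit times a monomial, $F_j'$ cannot divide $\hF_i$; the Laurent-ring membership is trivial since no substitution occurs.

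I expect the main obstacle to be case handling in part (a) when $F_j$ depends on $x_i$: one must argue carefully that $\hF_i$ is genuinely free of the variable $x_j$ so that the substitution is vacuous, invoking Lemma \ref{L:depend} with the correct orientation, and then that the nondivisibility of $\hF_i$ by $F_j'$ follows purely from $F_j'$ involving a variable ($x_i'$) absent from $\hF_i$ — this uses (LP1) for $F_j'$, namely that $F_j'$ is irreducible and not a monomial, so it has positive degree in $x_i'$ and cannot divide something of degree zero in $x_i'$ unless that something is zero, which it is not. A minor subtlety worth stating explicitly: one should record that the two ambient Laurent rings $\L(t)$ and $\L(t')$ coincide as subrings of $\FF$ only after identifying $x_i' = \hF_i/x_i$; but since $\hF_i$ does not involve $x_i$ or $x_i'$, it lies in $S[x_1^{\pm1},\dots,\widehat{x_i^{\pm1}},\dots,x_n^{\pm1}]$, a common subring of both, so the statement $\hF_i = \hF_i'$ is literally an equality in $\FF$ and the uniqueness lemma applies verbatim.
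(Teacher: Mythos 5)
Your overall framework---reduce to comparing, for each $j \neq i$, the normalizing exponent $a_j$ of $x_j$ in $\hF_i$ across the two seeds, treat separately the cases where $F_j$ does or does not depend on $x_i$, and conclude by uniqueness of $\hF_i$---is essentially the paper's, and your first case is fine. But there is a genuine gap in the case where $F_j$ depends on $x_i$: you invoke Lemma \ref{L:depend} to conclude that $F_i$ does not involve $x_j$, hence that $\hF_i$ is free of $x_j$ and the substitution $x_j \leftarrow F'_j/x$ is vacuous. The hypothesis of Lemma \ref{L:depend} is that the \emph{monomial} $F_j/\hF_j$ involves $x_i$, not that $F_j$ itself does, and the conclusion you draw is false in general: for the seed $\{(x_1,1+x_2),(x_2,1+x_1)\}$ over $S=\Z$ one has $\hF_1=F_1=1+x_2$, so with $i=1$, $j=2$ the polynomial $F_2$ depends on $x_1$ and yet $F_1$ genuinely involves $x_2$; the substitution is not vacuous, and your degree-in-$x'_i$ argument is being applied to the wrong object.

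What you actually need in this case is Lemma \ref{L:depend} in contrapositive form with the indices interchanged: if $F_j$ uses $x_i$, then $F_i/\hF_i$ does not involve $x_j$, i.e.\ $a_j=0$ already in the original seed; and since $F'_j$ uses $x'_i$ by Lemma \ref{L:variablesinvolved}, the same lemma in the mutated seed gives $a'_j=0$ as well. Equivalently, and this is how the paper argues directly: $F_j$ involves $x_i$ while $F_i$ does not (by (LP2)), and $F_i|_{x_j\leftarrow 0}\neq 0$ (by (LP1)), so the irreducible $F_j$ divides no coefficient of $F_i$ written as a polynomial in $x_j$, whence the maximal power of $F_j$ dividing $F_i|_{x_j\leftarrow F_j/x}$ is $0$; the identical argument applies to $F'_j$ and $F'_i=F_i$. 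With that substitution---the exponent is zero on both sides because the \emph{coefficients} of $F_i$ in $x_j$ avoid $x_i$, not because $F_i$ avoids $x_j$---your proof closes.
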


\begin{proof}
By definition we have $F'_i = F_i$, so we need to know that, for each $j$, the same power of $F_j$ divides $F_i\mid_{x_j \leftarrow F_j/x}$ as the power of $F'_j$ divides $F_i\mid_{x_j \leftarrow F'_j/x}$.  If $F_j$ does not depend on $x_i$, then $F_j \propto F'_j$ so this is clear.  
On the other hand, if $F_j$ depends on $x_i$ then $F'_j$ also depends on $x'_i$ by Lemma \ref{L:variablesinvolved}.  But then by Lemma \ref{L:depend} we conclude that both $\hF_i/F_i$ and $\hF'_i/F'_i$ do not involve $x_j$.
Thus $\hF_i/F_i$ and $\hF'_i/F'_i$ has the same power of $x_j$, for any $j$, and the lemma follows. 
\end{proof}

\begin{example} \label{ex:3}
We verify Lemma \ref{L:hFi} in Example \ref{ex:2}.  Let us compute $\hF_d$ in the seed $$\mu_c(t) = \{(a, b+1), (b, a^2+d^2), (d, b^2+b+a^3+a^2)\}.$$ Since $F_b$ depends on $d$, there is no $b$ in the denominator of $\hF_d$. To find the exponent of $a$ in this denominator, make the substitution 
 $a \longleftarrow \frac{b+1}{a'}$ in $F_d$, obtaining 
$$b(b+1)+\frac{(b+1)^2(b+1+a')}{(a')^3}.$$ The maximal power of $b+1$ that divides this is $(b+1)^1$, and therefore $\hF_d = F_d/a = (b^2+b+a^3+a^2)/a$. As we have seen in Example \ref{ex:1}, this coincides with $\hF_c$ in the original seed $t$, agreeing with Lemma \ref{L:hFi}.
\end{example}

\begin{prop}\label{P:irred}
Mutation at $i$ gives a valid seed $$\mu_i(\x,\F) = (\{x'_1,\ldots,x'_n\}, \{F'_1,\ldots,F'_n\}).$$ 
\end{prop}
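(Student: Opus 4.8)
The plan is to verify the three conditions defining a seed for $\mu_i(\x,\F)=(\x',\F')$. That $\x'$ is again a transcendence basis of $\FF$ over $\Frac(S)$ is clear; that each $F'_j$ lies in $\P'$, satisfies $(LP2)$, and is divisible by no variable of $\P'$ is built into the recipe through the choice of the monomial $M$ (and Lemma~\ref{L:depend} is exactly what makes $\hF_i|_{x_j\leftarrow 0}$, hence the whole construction, well defined). So the real content is that each $F'_j$ is a nonzero non-unit of $\P'$ which is irreducible. If $F_j$ does not involve $x_i$ — in particular if $j=i$, by $(LP2)$ — then $F'_j\propto F_j$, and both lie in the subring $S[x_k:k\neq i]$ common to $\P$ and $\P'$, so irreducibility and non-unitness are inherited; I would therefore reduce at once to the case that $F_j$ involves $x_i$. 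Then $j\neq i$; by Lemma~\ref{L:variablesinvolved}, $F'_j$ involves $x'_i$ and so is not a unit; and $F'_j\neq 0$ because the substitution $x_i\leftarrow \hF_i|_{x_j\leftarrow 0}/x'_i$ extends to an isomorphism of fields and is hence injective.

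For the irreducibility I would pass to Laurent rings. Since $F'_j$ is a non-unit divisible by no variable, it is irreducible in $\P'$ as soon as it is irreducible in $\L'$; and as $F'_j=MH_j$ with $M$ a unit of $\L'$, it suffices to prove $H_j$ irreducible in $\L'$. Set $A:=\hF_i|_{x_j\leftarrow 0}$, a nonzero element of $S[x_k^{\pm 1}:k\neq i,j]$ which — crucially — does not involve $x_i$ (because $\hF_i$ does not) and whose polynomial part is, up to a monomial unit, $F_i|_{x_j\leftarrow 0}$. The substitution $x_i\leftarrow A/x'_i$ defining $G_j$ extends to an isomorphism of fields $\FF'\to\FF$; let $\sigma$ be its inverse, so $\sigma$ sends $x'_i$ to $A/x_i$ and fixes $S$ and every other variable. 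A direct computation gives $\sigma(G_j)=F_j$, and since $\sigma$ fixes $A$ it restricts to an isomorphism $\L'_A\to\L_A$ between the localizations $\L'_A:=\L'[A^{-1}]$ and $\L_A:=\L[A^{-1}]$.

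Next, $F_j$ is prime in $\L$ (an irreducible of the UFD $\P$ which is divisible by no variable remains irreducible after inverting the variables), and, as $F_j$ involves $x_i$ while $A$ does not, $F_j\nmid A$, so $F_j$ is still prime in $\L_A$. Write $G_j=CH_j$ in $\L'$, where $C$ is the product of the factors stripped off in forming $H_j$; every prime factor of $C$ divides $A$, so $C$ is a unit of $\L_A$, and applying $\sigma$ to $G_j=CH_j$ gives $\sigma(H_j)=C^{-1}F_j\propto F_j$ in $\L_A$. Hence $H_j$ is irreducible in $\L'_A$. Suppose now, toward a contradiction, that $H_j=PQ$ in $\L'$ with $P,Q$ both non-units of $\L'$. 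Then one of them, say $Q$, is a unit of $\L'_A$; and since $\L'_A$ is $\L'$ with the prime factors of $A$ inverted, $Q$ equals a unit of $\L'$ times a product of integer powers of the non-monomial primes dividing $A$. As $Q\in\L'$ these exponents are non-negative, and if they all vanish then $Q$ is a unit of $\L'$, a contradiction; otherwise a non-monomial prime $p$ of $A$ divides $Q$, hence $H_j$, while $p$ also divides $A$, i.e.\ (up to a monomial) $F_i|_{x_j\leftarrow 0}$ — contradicting that $H_j$ is coprime to $F_i|_{x_j\leftarrow 0}$ by construction. So no such factorization exists, $H_j$ is irreducible in $\L'$, and therefore so is $F'_j$.

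I expect the main obstacle to be this last step: upgrading irreducibility of $H_j$ from $\L'_A$ back to $\L'$. The isomorphism $\sigma$ identifies only the large localizations $\L'_A$ and $\L_A$, not $\L'$ with $\L$, because it sends $x'_i$ to the (generically non-monomial) element $A/x_i$; what makes the upgrade work is, first, that $A$ is free of $x_i$ — which is exactly where Lemma~\ref{L:depend} is used, ensuring $\hF_i$ carries a non-negative exponent on $x_j$ so that $A=\hF_i|_{x_j\leftarrow 0}$ is well defined and still avoids $x_i$ — and, second, that $H_j$ was deliberately made coprime to $A$, which kills the one remaining possibility. (If $A$ happens to be a monomial then $\L'_A=\L'$ and there is nothing to upgrade: $G_j$ is just the image of the irreducible $F_j$ under an invertible monomial substitution.)
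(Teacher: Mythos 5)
Your proof is correct and follows essentially the same route as the paper's: both transport a putative factorization of $F'_j$ back to $F_j$ via the substitution $x'_i \leftarrow \hF_i|_{x_j\leftarrow 0}/x_i$, invoke the irreducibility of $F_j$, and dispose of the remaining cases using the fact that $H_j$ was stripped of all common factors with $\hF_i|_{x_j\leftarrow 0}$. Your packaging via the localizations $\L'_A \cong \L_A$ is a cleaner, more structural way of organizing the same case analysis that the paper carries out directly on the images $P_r|_{x'_i\leftarrow Z/x_i}$ of the putative factors.
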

\begin{proof}  
We need to check the condition (LP1).  By construction, it only remains to show that the $F'_j$ are irreducible in $\P'$.  This is clear if $F_j$ does not involve $x_i$.  Suppose otherwise.  
Then $F'_j$ involves $x'_i$, so it is a non-constant polynomial in $\P'=S[x_1', \ldots, x_n']$, and in particular is not a unit in $\P'$.  (Indeed, the only units in $\P'$ are the units in $S$.) 
Suppose $F'_j = P_1P_2$, for $P_r \in \P'$ non-units.  Let $Z =\hF_i|_{x_j \leftarrow 0}$.  Then
$G_j = A \cdot M^{-1} \cdot P_1P_2,$
where $G_j$ is as in \eqref{E:G}, and $M$ is the Laurent monomial of \eqref{E:M}, and we have that $A \in \P' \cap \P$ and the irreducible factors of $A$ are factors of $Z$.  Since $M$ is a Laurent monomial in $x'_1,x'_2,\ldots,\widehat{x'_j},\ldots, x'_n$ (together with a unit coefficient in $S$), up to a unit in $\L$, we have that $M|_{x'_i \leftarrow \frac{Z}{x_i}}$ is just a power of $Z$.  Now $$
F_j = G_j|_{x'_i \leftarrow \frac{Z}{x_i}},
$$
and $F_j$ is irreducible, so it follows that one of 
$P_r|_{x'_i \leftarrow \frac{Z}{x_i}}$ for $r = 1,2$ is either (i) a unit in $\L$, or (ii) it is a product of factors of $Z$ with a unit in $\L$.  Since $F'_j$ is not divisible by any $x'_k$, it follows that $P_r$ is not a monomial in $\P'$.  Since $Z$ does not involve $x_i$ or $x'_i$, case (ii) is only possible if $P_r$ does not involve $x'_i$, and hence $P_r$ is itself divisible by a factor of $Z$.  This would contradict the definition of $H_j$, so we must be in case (i): $P_r|_{x'_i \leftarrow \frac{Z}{x_i}}$ is a 
unit in $\L$.  If $P_r$ does not involve $x'_i$, then it is also a unit in $\P'$, which is a contradiction.  Finally, if $P_r$ involves $x'_i$, and since it is not 
divisible by $x'_i$ in $\P'$, it is clear that $P_r|_{x'_i \leftarrow \frac{Z}{x_i}}$ cannot
 be a unit. Indeed, this is because $P_r|_{x'_i \leftarrow \frac{Z}{x_i}}$ is not a monomial, as $P_r$ had terms with distinct degrees of $x'_i$, and this property is 
preserved under substitution.
\end{proof}

\begin{prop}\label{P:mutationinverse}
If $(\x',\F')$ is obtained from $(\x,\F)$ by mutation at $i$, then $(\x,\F)$ can be obtained from $(\x',\F')$ by mutation at $i$.
\end{prop}

\begin{proof}
By Lemma \ref{L:hFi}, mutating at $i$ twice we reproduce the same cluster variables $x_1,\ldots,x_n$.  Thus we need only focus on whether we can recover (up to a unit) $F_j$ for $j \neq i$.  If $F_j$ does not involve $x_i$ this follows from the definition.  
Now suppose that $F_j$ does involve the variable $x_i$.  Let $F''_j$ denote the result of mutating $F'_j$ at $i$ (since mutation is not completely deterministic, we are taking $F''_j$ to be any such mutation).  We have
$$
G_j = A \cdot M^{-1} \cdot F'_j
$$ 
as in the proof of Proposition \ref{P:irred}.  Now $F_j = G_j|_{x'_i \leftarrow \frac{Z}{x_i}}$ and $F_j$ is irreducible so $F_j$ must divide $A|_{x'_i \leftarrow \frac{Z}{x_i}}$, $M^{-1}|_{x'_i \leftarrow \frac{Z}{x_i}}$ or $F'_j|_{x'_i \leftarrow \frac{Z}{x_i}}$.  By assumption, $F_j$ involves $x_i$, so it does not divide $A|_{x'_i \leftarrow \frac{Z}{x_i}}$ or $M^{-1}|_{x'_i \leftarrow \frac{Z}{x_i}}$.  
Thus it must divide $F'_j|_{x'_i \leftarrow \frac{Z}{x_i}}$.  It is easy to see then that $F_j$ divides $F''_j$. Indeed, $F''_j$ differs from $F'_j|_{x'_i \leftarrow \frac{Z}{x_i}}$
by a Laurent monomial factor, and by a factor consisting of common divisors with $Z$. Neither one can be divisible by $F_j$, since it is not monomial and depends 
on $x_i$, unlike $Z$.  Irreducibility of $F_j$ and $F''_j$ now implies the statement.
\end{proof}

\begin{example} \label{ex:4}
Consider the seed $$t= \{(a, b+1), (b, (a+1)^2+c^2), (c, b^2+b+a^3+a^2)\}$$ from Example \ref{ex:1} and its mutation $$\mu_c(t) = \{(a, b+1), (b, a^2+d^2), (d, b^2+b+a^3+a^2)\}$$
 from Example \ref{ex:2}. We saw in Example \ref{ex:3} that $\hF_d = F_d/a$. It can be easily seen that $\hF_a=F_a$ and $\hF_b=F_b$ in $\mu_c(t)$, just like in Example \ref{ex:1}. 
Let us see what happens when we mutate the seed $\mu_c(t)$ at $d$. The variable $d$ changes into
$$\frac{\hF_d}{d} = \frac{b^2+b+a^3+a^2}{ad} = c.$$ 
The exchange polynomial $F_a$ does not change (or changes only by a unit in $S = \Z$) since it does not depend on $d$. To compute the new $F_b$, we make the substitution $$d \longleftarrow \frac{\hF_d|_{b=0}}{c} = \frac{a(a+1)}{c}.$$ The result is 
$a^2+(\frac{a(a+1)}{c})^2$. Now we need to kill all common factors it has with $a(a+1)$, and change it to an irreducible polynomial by multiplying by a monomial. The first step kills 
the factor $a^2$, resulting in $1 + (\frac{a+1}{c})^2$, and the second step turns it into $(a+1)^2+c^2$. Thus, the resulting mutated seed can be chosen to be
$$\mu_d(\mu_c(t)) = \{(a, b+1), (b, (a+1)^2+c^2), (c, b^2+b+a^3+a^2)\} = t.$$
This agrees with Proposition \ref{P:mutationinverse}.
\end{example}

\section{Laurent phenomenon algebras} \label{sec:LP}
\subsection{Definition}
Let $S$ be a fixed coefficient ring and $\FF$ denote the ambient fraction field in $n$ indeterminates as in Section \ref{sec:seeds}.  A {\it Laurent phenomenon algebra} $(\A, \{(\x,\F\})$ is a subring of $\A \subset \FF$ together with a distinguished collection of seeds $\{(\x,\F)\} \subset \FF$ belonging to the ambient field $\FF$.  The algebra $\A \subset \FF$ is generated over $S$ by all the variables $\x$ in any of the seeds of $\A$.  The seeds satisfy the condition: for each seed $(\x,\F)$ and $i \in [n]$, we are given a seed $(\x',\F') = \mu_i(\x,\F)$ obtained from $(\x,\F)$ by mutation at $i$. Thus the seeds form the vertices of a $n$-regular graph, where the edges are mutations.  Furthermore, we assume all seeds are connected by mutation.  We shall often write $\A$ to mean the pair $(\A,\{(\x,\F)\})$.  To emphasize that the seeds are part of the data we shall say ``LP algebra $\A$'', and if the seeds are not part of the information, we say ``commutative ring $\A$''.

If $t = (\x,\F)$ is any seed in $\FF$, we shall let $\A(t)$ denote any LP algebra which has $t$ as a seed.  We say that $\A(t)$ is generated (as a LP algebra) by $t$, or has {\it initial seed} $t$.  Since seed mutation is only well-defined up to units, the seeds of $\A(t)$ are not determined by $t$.  However, as we shall see presently, the commutative subring $\A(t) \subset \FF$ is determined by $t$.

\subsection{Equivalence of seeds}
Recall that for two elements $f,g \in \FF$, we write $f \propto g$ to mean that $f$ and $g$ differ (multiplicatively) by a unit in $S$.  We say that two seeds $(\x,\F)$ and $(\x',\F')$ are {\it equivalent} if the following two conditions hold:
\begin{enumerate}
\item
For each $i$ we have $x_i \propto x'_i$, and
\item
For each $i$ we have $F_i \propto F'_i$, where $F_i, F'_i$ are viewed as elements of the ambient field $\FF = \Frac(S[x_1,x_2,\ldots,x_n]) = \Frac(S[x'_1,x'_2,\ldots,x'_n])$.
\end{enumerate}

\begin{lemma}\label{lem:equiv}
Suppose $(\x,\F)$ and $(\x',\F')$ are equivalent seeds.  Let $(\yy,\G)$ and $(\yy',\G')$ be obtained from $(\x,\F)$ and $(\x',\F')$ respectively by mutation at $i$.  Then $(\yy,\G)$ and $(\yy',\G')$ are equivalent seeds.
\end{lemma}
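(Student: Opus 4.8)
The plan is to track each ingredient in the mutation recipe of Section~\ref{sec:seeds} and show it is insensitive, up to units in $S$, to replacing a seed by an equivalent one. Write $(\x,\F)$ and $(\x',\F')$ for the two equivalent seeds, so $x_i = u_i x'_i$ and $F_i = v_i F'_i$ for units $u_i, v_i \in S$. First I would observe that the exchange Laurent polynomials transform in the expected way: since $\hF_j$ is obtained from $F_j$ by multiplying by a Laurent monomial in the $x$'s chosen by the conditions \eqref{hatF}, and those conditions (Laurentness after $x_j \leftarrow F_j/x$, and non-divisibility by $F_j$) are unaffected by rescaling each variable and $F_j$ by a unit, we get $\hF_j \propto \hF'_j$, with the monomial exponents $a_\bullet$ literally the same. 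In particular the mutated cluster variables satisfy $y_i = \hF_i/x_i \propto \hF'_i/x'_i = y'_i$, and $y_j = x_j \propto x'_j = y'_j$ for $j \neq i$, giving condition (1) of equivalence for the outputs.

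Next I would handle the exchange polynomials $G_j$, $H_j$, $F'_j$ coming out of \eqref{E:G}. If $F_j$ does not involve $x_i$, then $G_j \propto G'_j$ since $F_j \propto F'_j$, and $\mu_i$ leaves it proportional to itself, so $\G_j \propto \G'_j$ trivially; in fact one should also invoke Lemma~\ref{L:variablesinvolved} to know the "does not depend on $x_i$" case is the same on both sides. If $F_j$ does involve $x_i$, I would substitute into \eqref{E:G}: since $\hF_i|_{x_j \leftarrow 0} \propto \hF'_i|_{x'_j \leftarrow 0}$ (a unit multiple, by the first step, and these are nonzero by (LP1)) and $y_i \propto y'_i$, plugging $\hF_i|_{x_j \leftarrow 0}/y_i$ for $x_i$ into $F_j$ versus $\hF'_i|_{x'_j \leftarrow 0}/y'_i$ for $x'_i$ into $F'_j = v_j^{-1} F_j$ differs only by a unit factor and by unit rescalings of the other variables $x_k = u_k x'_k$. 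Hence $G_j$ and $G'_j$ agree up to a unit times a Laurent monomial in the common variables $x'_k$ ($k \neq i,j$). Removing common factors with $\hF_i|_{x_j \leftarrow 0}$ (respectively $\hF'_i|_{x'_j \leftarrow 0}$) is an operation defined only up to units in $S$ to begin with, and since the two "$Z$" polynomials are unit multiples of each other, the resulting $H_j$ and $H'_j$ are again unit-times-monomial multiples of each other. Finally, $F'_j$ and $(F'_j)'$ are obtained from $H_j$, $H'_j$ by multiplying by the unique (up to unit) Laurent monomial correction that lands in $\P'$, satisfies (LP2), and is not divisible by any variable; since $H_j \propto (\text{monomial}) \cdot H'_j$ and the target conditions are the same in $\P' = S[x'_1,\dots,x'_n] = S[y_1,\dots,y_n]$ up to the unit rescalings $y_k = u_k x'_k$, the two normalized outputs are proportional, i.e.\ $\G_j \propto \G'_j$. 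This establishes condition (2).

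The main obstacle I anticipate is bookkeeping rather than a genuine difficulty: one must be careful that "differ by a unit times a Laurent monomial in the remaining variables" is preserved under each of the three steps (substitution, removal of common factors, monomial renormalization), and in particular that the common-factor-removal step behaves well — a priori one worries that $G_j$ and $G'_j$ differing by a monomial could change \emph{which} irreducible factors they share with $Z = \hF_i|_{x_j \leftarrow 0}$. But $Z$ is not divisible by any variable (by the first step, since $\hF_i$ satisfies the variable-freeness built into the $\F$/$\hF$ normalizations, and setting $x_j \leftarrow 0$ cannot introduce variable factors downward), so multiplying $G_j$ by a monomial in the $x'_k$'s does not affect the gcd with $Z$ in $S[x_1,\dots,\hat x_i,\dots,\hat x_j,\dots,x_n]$. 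Thus $H_j$ is genuinely well-defined up to a unit and a monomial, and the argument closes. The only other point worth stating explicitly is that the identification $\Frac(S[x_1,\dots,x_n]) = \Frac(S[x'_1,\dots,x'_n])$ used throughout is exactly the one in the definition of equivalence, so the relation $\propto$ is being tested in a single fixed field $\FF$.
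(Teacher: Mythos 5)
The paper gives no proof of this lemma at all --- it is stated bare, evidently regarded as immediate from the definitions --- so your write-up is a careful expansion of exactly the verification the authors are implicitly invoking: each ingredient of the mutation recipe ($\hF$, the substitution \eqref{E:G}, the common-factor removal, the final monomial normalization) is checked to be insensitive, up to units and Laurent monomials, to a unit rescaling of the seed. Your argument is correct in structure and conclusion; in fact the units conspire more tightly than you claim, since a short computation (using that $\hF_i$ and $\hF'_i$ carry the same exponents $a_\bullet$) shows $Z/y_i = u_i\, Z'/y'_i$ where $x_i = u_i x'_i$, so that $G_j \propto G'_j$ on the nose with no monomial discrepancy at that stage.

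One sub-claim in your final paragraph is false as stated: $Z = \hF_i|_{x_j\leftarrow 0}$ \emph{can} be divisible by a variable even though $\hF_i$ is not --- e.g.\ $F_i = x_j + x_k x_l$ gives $Z = x_k x_l$ --- so setting $x_j \leftarrow 0$ certainly can ``introduce variable factors downward.'' This does not sink the argument, but the correct justification is different: if $G_j$ and $G'_j$ differ by a unit times a Laurent monomial, then any \emph{non-monomial} irreducible factor of $Z$ divides one iff it divides the other, while a discrepancy in the \emph{monomial} common factors only perturbs $H_j$ versus $H'_j$ by a further unit-times-monomial, which the final normalization by $M$ absorbs. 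With that repair the proof is complete.
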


\begin{lemma}\label{lem:seednorm}
Suppose $\A(t)$ and $\A'(t)$ are two LP algebras generated by a fixed seed $t$.  Then each seed of $\A(t)$ is equivalent to some seed of $\A'(t)$ and conversely.  In particular, as subrings of $\FF$, the two commutative rings $\A(t)$ and $\A'(t)$ are identical.
\end{lemma}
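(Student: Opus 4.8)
The plan is to prove Lemma~\ref{lem:seednorm} by combining the fact that mutation is well-defined up to units with Lemma~\ref{lem:equiv}, using an induction on the distance of a seed from the initial seed $t$ in the exchange graph. Since both $\A(t)$ and $\A'(t)$ have all seeds connected by mutation and both contain $t$, it suffices to show: for each seed $s$ of $\A(t)$ there is an equivalent seed $s'$ of $\A'(t)$, and symmetrically. Fix a path $t = t_0, t_1, \ldots, t_k = s$ of mutations in $\A(t)$, say $t_{\ell+1} = \mu_{i_{\ell+1}}(t_\ell)$. In $\A'(t)$ we may follow the ``same'' sequence of mutation directions $i_1, i_2, \ldots, i_k$, producing seeds $t = t'_0, t'_1, \ldots, t'_k =: s'$. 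The claim is that $t_\ell$ is equivalent to $t'_\ell$ for all $\ell$, which gives the result with $\ell = k$.

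First I would establish the base case: $t_0 = t'_0 = t$, which is trivially equivalent to itself. For the inductive step, suppose $t_\ell$ is equivalent to $t'_\ell$. Both $t_{\ell+1}$ and $t'_{\ell+1}$ are obtained by mutation at $i_{\ell+1}$ — here one must be slightly careful that mutation is only defined up to units in $S$, so $t_{\ell+1}$ is \emph{some} seed obtained by mutation at $i_{\ell+1}$ from $t_\ell$, and likewise $t'_{\ell+1}$ from $t'_\ell$; but Lemma~\ref{lem:equiv} says precisely that any two such mutated seeds (of equivalent seeds, at the same index) are equivalent, so $t_{\ell+1} \propto t'_{\ell+1}$ componentwise in the required sense. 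This closes the induction. The symmetric statement (every seed of $\A'(t)$ is equivalent to a seed of $\A(t)$) follows by the same argument with the roles of $\A$ and $\A'$ swapped.

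For the final sentence, I would argue that equivalence of seeds preserves the subring they generate: if $(\x,\F)$ and $(\x',\F')$ are equivalent, then $x_i \propto x'_i$ means each $x'_i$ is a unit multiple of $x_i$, hence $S[x_1,\ldots,x_n] = S[x'_1,\ldots,x'_n]$ as subrings of $\FF$, and in particular the cluster variables of one seed lie in the $S$-subalgebra generated by the cluster variables of the other. Since $\A(t)$ is by definition the $S$-subalgebra of $\FF$ generated by all cluster variables appearing in any seed of $\A(t)$, and every such variable is (up to a unit of $S$, hence within the same $S$-subalgebra) a cluster variable of some seed of $\A'(t)$ and conversely, we conclude $\A(t) = \A'(t)$ as subrings of $\FF$.

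The main obstacle is essentially bookkeeping rather than a genuine mathematical difficulty: one must track the ``up to a unit in $S$'' ambiguity carefully at every mutation step and make sure that the notion of following ``the same'' mutation sequence in $\A'(t)$ makes sense even though the graph structure (the specific choice of representative seed at each vertex) differs between $\A(t)$ and $\A'(t)$. Lemma~\ref{lem:equiv} is exactly the tool that absorbs this ambiguity, so once the induction is set up correctly the proof is short. A minor point to verify is that $\A(t)$ and $\A'(t)$, having the \emph{same} initial seed $t$ (not merely equivalent ones), really do start the induction from a common vertex — this is immediate from the hypothesis but worth stating.
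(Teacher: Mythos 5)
Your proof is correct and is exactly the argument the paper intends (the paper states the lemma without proof, treating it as an immediate consequence of Lemma~\ref{lem:equiv}): induct on the length of a mutation path from $t$, apply Lemma~\ref{lem:equiv} at each step, and note that equivalent seeds have cluster variables differing by units of $S$ so they generate the same $S$-subalgebra of $\FF$.
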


\begin{example} \label{ex:n1}
 Let $S = \mathbb Z[C,C^{-1}]$ and consider two LP algebras $\A$ and $\A'$ generated by the same seed $$t = \{(a, f+C), (f, a+C)\}.$$ The first LP algebra $\A$ has cluster variables $$a, \; b, \; d=\frac{b+1}{a}, \; e=\frac{bC+a+C}{ab}, \; f=\frac{a+C}{b},$$
 and seeds
 $$\{(a, b+1), (b, a+C)\}, \{(b, Cd+1), (d, b+1)\}, \{(d, e+1), (e, Cd+1)\},$$ $$\{(e, f+C), (f, e+1)\}, \{(a, f+C), (f, a+C)\}.$$
 The second LP algebra $\A'$ has cluster variables $$a, \; b, \; d'=\frac{b+1}{aC}, \; e=\frac{bC+a+C}{ab}, \; f=\frac{a+C}{b},$$
 and seeds
 $$\{(a, (b+1)/C), (b, a+C)\},\{(b, C^2d'+1), (d', (b+1)/C)\},\{(d', (e+1)/C), (e, C^2d'+1)\},$$ $$\{(e, f+C), (f, (e+1)/C)\},\{(a, f+C), (f, a+C)\}.$$
 Then the seeds split into pairs of equivalent ones in the obvious way. For example, the seeds $\{(d, e+1), (e, Cd+1)\}$ and $\{(d', (e+1)/C), (e, C^2d'+1)\}$ are equivalent since $d \propto d'$ and also $e+1 \propto (e+1)/C$ and $Cd+1 \propto C^2d'+1$. It is also easy to see that the rings generated by 
 $a,b,d,e,f$ and $a,b,d',e,f$ over $S = \mathbb Z[C,C^{-1}]$ coincide, in agreement with Lemma \ref{lem:seednorm}. 
\end{example}

\subsection{Normalization}
Let $\A$ be a Laurent phenomenon algebra.  We will say that $\A$ is {\it normalized} if whenever two seeds $t_1$, $t_2$ are equivalent, we have that $t_1 = t_2$.  Suppose $\A'$ is another LP algebra with the same ambient field as $\A$.  Then we will say $\A$ is the normalization of $\A'$ if $\A$ is normalized, and there is a surjective map $p:t' \mapsto t$ sending seeds of $\A'$ to seeds of $\A$ such that
\begin{enumerate}
\item
for each seed $t'$ of $\A'$, we have that $p(t')$ and $t'$ are equivalent, and
\item
for each seed $t'$ of $\A'$ and each $i \in [n]$ we have that $p(\mu_i(t')) = \mu_i(p(t'))$.
\end{enumerate}
By Lemma \ref{lem:equiv}, we see that (1) and the fact that $\A$ is normalized implies (2).  Our usage of ``normalization'' is different from, but related to, the usual usage in cluster algebras \cite{CA1}.

The following result follows from Lemma \ref{lem:seednorm}.
\begin{lemma}\label{lem:normal}
Suppose $\A$ and $\A'$ are two LP algebras both generated by a fixed seed $t$.  If $\A'$ is normalized, then it is the normalization of $\A$.
\end{lemma}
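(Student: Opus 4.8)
The plan is to build the map $p$ directly out of Lemma~\ref{lem:seednorm} and then read off the two properties required by the definition of ``normalization'', the second of which is automatic. First I would observe that, since $\A$ and $\A'$ are both generated by the same seed $t$, Lemma~\ref{lem:seednorm} tells us that every seed $s$ of $\A$ is equivalent to \emph{some} seed of $\A'$; because $\A'$ is normalized, any two equivalent seeds of $\A'$ coincide, so that seed of $\A'$ is in fact unique. Define $p(s)$ to be it. Property~(1) in the definition of normalization --- that $p(s)$ and $s$ are equivalent --- then holds by construction.

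Next I would check surjectivity of $p$. The converse half of Lemma~\ref{lem:seednorm} says each seed $s'$ of $\A'$ is equivalent to some seed $s$ of $\A$; then $p(s)$ and $s'$ are both seeds of $\A'$ equivalent to $s$, hence equivalent to each other, hence equal since $\A'$ is normalized. Thus $s' = p(s)$ lies in the image of $p$, and $p$ is surjective.

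Finally, property~(2), namely $p(\mu_i(s)) = \mu_i(p(s))$ for every seed $s$ of $\A$ and every $i \in [n]$. It is safest to read ``$\mu_i$'' here as the specific mutated seed that the data of the LP algebra (of $\A$ on the left, of $\A'$ on the right) attaches to a seed; each is a legitimate mutation at $i$ of $s$, respectively of $p(s)$. Since $s$ and $p(s)$ are equivalent, Lemma~\ref{lem:equiv} gives that $\mu_i(s)$ and $\mu_i(p(s))$ are equivalent. By construction $p(\mu_i(s))$ is the unique seed of $\A'$ equivalent to $\mu_i(s)$, and $\mu_i(p(s))$ is also a seed of $\A'$ equivalent to $\mu_i(s)$; as $\A'$ is normalized, the two agree. (Equivalently, one may just quote the observation made in the text above, that~(1) and normalizedness of $\A'$ together force~(2) via Lemma~\ref{lem:equiv}.) This exhibits $\A'$ as the normalization of $\A$.

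I do not anticipate a genuine obstacle: the whole argument is the uniqueness of equivalent seeds inside the normalized algebra $\A'$, fed through Lemmas~\ref{lem:seednorm} and~\ref{lem:equiv}. The one place to be careful is that seed mutation is defined only up to a unit in $S$, so ``$\mu_i$'' must be interpreted as the chosen mutation that is part of the defining data of each LP algebra; once that is fixed no choices remain, and Lemma~\ref{lem:equiv} handles the compatibility.
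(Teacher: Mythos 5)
Your proof is correct and follows exactly the route the paper intends: the paper simply asserts that the result ``follows from Lemma~\ref{lem:seednorm}'', and your argument spells out that deduction --- defining $p(s)$ as the unique seed of $\A'$ equivalent to $s$, checking surjectivity from the converse direction of Lemma~\ref{lem:seednorm}, and obtaining property~(2) from property~(1) via Lemma~\ref{lem:equiv}, just as the paper notes in the paragraph preceding the lemma.
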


\begin{example}
Both of the LP algebras in Example \ref{ex:n1} are normalized, and thus each of them is a normalization of the other.  One could get a non-normalized LP algebra $\A''$ with ten seeds by taking the multiset union of all the seeds in $\A$ and in $\A'$.  The exchange graph (see Section \ref{sec:exchangegraph}) of $\A''$ is a 10-cycle that goes once through each of the ten seeds: imagine gluing the two 5-cycle exchange graphs of $\A$ and $\A''$ by cutting one of the edges incident to the initial seed $t$. 
\end{example}

\subsection{Finite type and finite mutation type}\label{sec:finitetype}
Suppose $\A$ is a LP algebra which is normalized.  Then we say that $\A$ is of {\it finite type} if it has finitely many seeds.  If $\A$ is not necessarily normalized, we say that $\A$ is of finite type if it has a normalization $\A'$ of finite type.  This condition implies that $\A$ has finitely many equivalence classes of seeds, and the converse holds in rank two (Corollary \ref{cor:rank2}) but is not clear in general. 

Call two seeds $t$ and $t'$ {\it {similar}} if there exists a seed $t''$ equivalent to $t'$ such that $t''$ can be obtained from $t$ by renaming the cluster variables (and substituting this renaming into the exchange polynomials).  In particular, equivalent seeds are similar.  Let us say that an LP algebra is {\it {of finite mutation type}} if it has finitely many similarity classes of seeds. In the case of cluster algebras there is a beautiful theory of cluster algebras of
 finite mutation type, see \cite{FSTh, FST}.  See Sections \ref{sec:GaleRobinson} and \ref{sec:finitemutation} for examples of interesting similarity classes of seeds, and of interesting LP algebras of finite mutation type.

\subsection{Freezing}
Let $\A$ be a rank $n$ Laurent phenomenon algebra, and $(\x,\F)$ a chosen seed in $\A$.  Let $i \in [n]$.  Let $S' = S[x_i^{\pm 1}]$.  Let $\{(\x^{(1)},\F^{(1)}), (\x^{(2)},\F^{(2)}),\ldots\}$ be the subset of seeds that can be obtained from $(\x,\F)$ by mutation at the indices $j \in [n] \setminus i$.  In particular, each seed $(\x^{(k)},\F^{(k)})$ has $x^{(k)}_i = x_i$.

For each seed $(\x^{(k)},\F^{(k)})$ we produce a rank $n-1$ seed $(\x'^{(k)},\F'^{(k)})$ as follows: we remove $(x_i,F_i)$, and we replace $F^{(k)}_j$ for $j \neq i$ by $F'^{(k)}_j := F^{(k)}_j/x_i^d$, where the power $x_i^d$ is the same as that in $\hF^{(k)}_j$.  We also have $x'^{(k)}_j = x^{(k)}_j$ for $j \neq i$.
Let us now consider the collection of rank $n-1$ seeds $\{(\x'^{(k)},\F'^{(k)})\}$.  
The polynomials $F'^{(k)}_i$ are now considered elements of $S'[x_1,\ldots,x_{i-1},x_{i+1},\ldots,x_n]$.  We claim that the seeds $(\x'^{(k)},\F'^{(k)})$ satisfy (LP1) and (LP2). Indeed, $F^{(k)}_j/x_i^d$ is still irreducible in $S'[x_1,\ldots,x_{i-1},x_{i+1},\ldots,x_n]$ since any factorization in $S'[x_1,\ldots,x_{i-1},x_{i+1},\ldots,x_n]$ would, after clearing denominators, give a factorization of $F^{(k)}_j$ in $S[x_1,\ldots,x_n]$.  Also, $F^{(k)}_j/x_i^d$ will not be divisible by $x_k$ for $k \neq i$, and will not depend on $x_j$ (because the same holds for $F^{(k)}_j$).  Let $\A' \subset \FF =\Frac(S'[x_1,\ldots,x_{i-1},x_{i+1},\ldots,x_n])$ be the subalgebra generated by all the variables $\x'$.

\begin{example}
 Let $\A$ be an LP algebra over $S = \mathbb Z$ with the initial seed $$t = \{(a, b+1), (b, a+c), (c,b+1)\}.$$ We mutate at the two variables distinct from $c$ several times to obtain four more seeds: 
 $$\{(b, c^2d+1), (c,b+1), (d, b+1)\}, \{(c,e+1), (d, e+1), (e, c^2d+1)\},$$ $$\{(c,e+1), (e, f+c), (f, e+1)\}, \text{ and }  \{(a, f+c), (c,f+a), (f, a+c)\}.$$  
 
 Let $\A'$ be the LP algebra over $S = \mathbb Z[c,c^{-1}]$ obtained from $\A$ by freezing $c$ in $t$. The seeds of $\A'$ obtained from the above seeds of $\A$ are 
 $$\{(a, (b+1)/c), (b, a+c)\},\{(b, c^2d+1), (d, (b+1)/c)\},\{(d, (e+1)/c), (e, c^2d+1)\},$$ $$\{(e, f+c), (f, (e+1)/c)\},\{(a, f+c), (f, a+c)\},$$ where $$a,b,d=\frac{b+1}{ac},e=\frac{bc+a+c}{ab},f=\frac{a+c}{b}$$ are the variables in those seeds, coinciding with the variables of the original 
 LP algebra $\A$. Here we applied the freezing procedure as described above. For example, in the first seed we have $F_a = (b+1)/c$ since $c$ appears with exponent $1$ in the denominator $F_a/\hF_a$ in $t \in \A$.
The reader can verify that after the identification $C \mapsto c$, the LP algebra $\A'$ from Example \ref{ex:n1} can be identified with the LP algebra $\A'$ in the current example. 
\end{example}

\begin{prop}
The algebra $\A'$, together with the seeds $\{(\x',\F')\}$ are a Laurent phenomenon algebra.
\end{prop}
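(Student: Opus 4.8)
The plan is to verify that the collection $\{(\x'^{(k)},\F'^{(k)})\}$ satisfies the axioms of a Laurent phenomenon algebra: namely, that each $(\x'^{(k)},\F'^{(k)})$ is a genuine seed in the ambient field $\FF = \Frac(S'[x_1,\ldots,\widehat{x_i},\ldots,x_n])$, that mutation of these seeds in the rank $n-1$ sense agrees with the restriction of the rank $n$ mutations, and that the seeds are all connected by mutation. The last point is immediate from the definition of the collection $\{(\x^{(k)},\F^{(k)})\}$ as exactly those seeds reachable from $(\x,\F)$ by mutations at indices in $[n]\setminus i$: this is a connected subgraph of the rank $n$ mutation graph by construction, and the passage to rank $n-1$ seeds preserves that adjacency structure. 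The fact that each $(\x'^{(k)},\F'^{(k)})$ satisfies (LP1)--(LP2) is already asserted in the paragraph preceding the statement, so the real content is the compatibility of mutation.

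First I would fix a seed $(\x^{(k)},\F^{(k)})$ and an index $j \in [n]\setminus i$, and compare the rank $n$ mutation $\mu_j$ applied to $(\x^{(k)},\F^{(k)})$ with the rank $n-1$ mutation $\mu_j$ applied to $(\x'^{(k)},\F'^{(k)})$. The key observation is that the exchange Laurent polynomials are essentially unchanged by freezing: if $\hF^{(k)}_\ell = x_i^{d}\, m \, F^{(k)}_\ell$ (where $m$ is a monomial in the remaining variables), then $\hF'^{(k)}_\ell = m\, F'^{(k)}_\ell$, and since $F'^{(k)}_\ell = F^{(k)}_\ell / x_i^d$ we get $\hF'^{(k)}_\ell = \hF^{(k)}_\ell$ as elements of $\FF$, where $x_i$ is now a unit in $S'$. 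This requires checking that the defining conditions \eqref{hatF} for $\hF'^{(k)}_\ell$ over the smaller variable set, with $x_i$ adjoined as a coefficient, pick out the same Laurent polynomial; this follows because the substitution $x_\ell \leftarrow F_\ell/x$ and the divisibility test in \eqref{hatF} involve only the variables $x_1,\ldots,\widehat{x_i},\ldots,x_n$ and the irreducible element $F_\ell$ (or $F'_\ell$, which differs from $F_\ell$ by the unit $x_i^d$), so the answer is insensitive to whether $x_i$ is a variable or a coefficient. Given this, the new cluster variable $x'^{(k)}_j{}' = \hF'^{(k)}_j / x^{(k)}_j$ agrees with $\hF^{(k)}_j/x^{(k)}_j$, and the substitution formula \eqref{E:G} together with the removal of common factors with $\hF_i|_{x_j\leftarrow 0}$ produces the same $H_j$ up to the monomial/unit ambiguity; the only difference is that in the rank $n-1$ picture the ambiguous monomial $M$ ranges over Laurent monomials in the remaining $x'_\ell$ with coefficient a unit in $S' = S[x_i^{\pm 1}]$, which allows exactly the extra factor of $x_i^{d}$ that distinguishes $F'$ from $F$. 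So after mutation the rank $n-1$ seed obtained is equivalent (in the sense of the preceding subsection) to the one obtained by freezing $x_i$ in $\mu_j(\x^{(k)},\F^{(k)})$, and by Lemma \ref{lem:equiv} we can choose representatives so that it is equal.

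The main obstacle I anticipate is bookkeeping the monomial and unit ambiguities carefully: both the exchange polynomials $F^{(k)}_j$ and the frozen versions $F'^{(k)}_j$ are defined only up to units (in $S$ and $S'$ respectively), and the mutation output $F'_j = MH_j$ carries a further monomial ambiguity, so the comparison must be phrased at the level of the equivalence relation on seeds rather than literal equality, and then upgraded using Lemma \ref{lem:equiv} and Lemma \ref{lem:seednorm}. A secondary point needing care is the case $F^{(k)}_j = F^{(k)}_i$ (or equality among exchange polynomials generally), where Lemma \ref{lem:hcrit} does not apply and one must fall back on the direct definition of $\hF$; but since we never mutate at $i$ in this collection, $\hF_i$ is irrelevant to the rank $n-1$ dynamics except through $\hF_i|_{x_j\leftarrow 0}$, which is a fixed polynomial in $S'[x_1,\ldots,\widehat{x_i},\ldots,x_n]$ and causes no new difficulty. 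Finally I would note that $\A'$ as a commutative ring is generated over $S'$ by the variables appearing in the $\x'^{(k)}$, which are exactly the variables appearing in the $\x^{(k)}$ other than $x_i$ (with $x_i$ now a coefficient), so $\A'$ is well defined and the pair $(\A',\{(\x',\F')\})$ is a Laurent phenomenon algebra.
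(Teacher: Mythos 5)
Your proposal is correct and follows essentially the same route as the paper: the paper's proof simply states that the result follows from the definitions, with the key point being exactly what you identify — that $x_i$ is a unit in $S'$, so the extra powers of $x_i$ distinguishing $F'$ from $F$ can be absorbed into the monomial $M$ in the mutation of exchange polynomials. Your write-up is a more detailed expansion of that same observation.
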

\begin{proof}
We have already explained that each $(\x',\F')$ is a legitimate LP seed.  We show that if two seeds $t=(\x^{(r)},\F^{(r)})$ and $\mu_\ell(t) = (\x^{(s)},\F^{(s)})$ of $\A$ are related by mutation at $\ell$, then so are $t' = (\x'^{(r)},\F'^{(r)})$ and $\mu_\ell(t)' = (\x'^{(s)},\F'^{(s)})$.  By \eqref{hatF} and the definition of $\F'^{(k)}_j$, we have that $\hF'^{(k)}_j = \hF^{(k)}_j$ for all $k$ and $j \neq i$.  It follows that the equation $x^{(r)}_\ell x^{(s)}_\ell = \hF^{(r)}_\ell$ implies that $x'^{(r)}_\ell x'^{(s)}_\ell = \hF'^{(r)}_\ell$.  
Thus the cluster variables in $\A'$ mutate correctly.
Now we check that the exchange polynomials in $\A'$ mutate correctly.  The equality $\hF'^{(r)}_j = \hF^{(r)}_j$ and the equality $x'^{(r)}_j = x^{(r)}_j$ for all $j \neq i$ implies that we perform the same substitution in \eqref{E:G} to calculate $\mu_\ell$ for the seeds $t$ and $t'$.  The rest of the calculation of the mutation is also the same, and the only difference is that for the seed $t'$, we started with $F'^{(r)}_j = F^{(r)}_j/x_i^d$.  But $x_i$ is a unit in the coefficient ring $S'$ of $\A'$, and mutations of exchange polynomials are defined up to units, so we indeed have $\mu_\ell(t)' = \mu_\ell(t')$.
\end{proof}

\subsection{Cluster complex and exchange graph}\label{sec:exchangegraph}
The {\it cluster complex} of a LP algebra is the simplicial complex with base set equal to the set of cluster variables, and faces corresponding to collections of cluster variables that lie in the same cluster.  The {\it exchange graph} of a LP algebra $\A$ is the graph with vertex set equal to the set of seeds of $\A$, and edges given by mutations.

\begin{remark}\label{rem:flag}
The cluster complex of a LP algebra is not always a flag complex: clusters are not determined by pairwise compatibility.  Take the seed $$t = \{(x_1,P),(x_2,P),(x_3,Q),(x_4,Q)\}$$ where $P,Q \in S$ are irreducible and not proportional.  The normalized LP algebra generated by $t$ has 9 seeds and 6 cluster variables.  Every pair of cluster variables appears together in some cluster.

The corresponding property is conjectured to hold for cluster algebras \cite{FSTh}.
\end{remark}

\section{Comparison with cluster algebras of geometric type} \label{sec:LPCL}
We compare our notion of seeds and seed mutation with those in the theory of cluster algebras.  We will restrict ourselves to cluster algebras of geometric type. 

In this subsection we will take an integer $m \geq n$ and set $S = \Z[x^{\pm 1}_{n+1},x^{\pm 1}_{n+2},\ldots, x^{\pm 1}_m]$.  The variables $x_{n+1},\ldots,x_m$ are called {\it frozen variables}.  A {\it cluster algebra seed} in $\FF$ is a pair $(\x,\tB)$ where
\begin{enumerate}
\item
$\x = \{x_1,x_2,\ldots,x_n\}$ is a transcendence basis for $\FF$ over $\Frac(S)$.
\item
$\tB = (b_{ij})$ is a $m \times n$ {\it exchange matrix} such that the top $n \times n$ submatrix $B$ of $\tB$ is a skew-symmetrizable integer matrix: that is, there exists a ($n \times n$) diagonal matrix $D$ with positive diagonal entries such that the matrix $DB$ is skew-symmetric.
\end{enumerate}
To a cluster algebra seed $(\x,\tB)$ we associate exchange polynomials $\{F_1,\ldots,F_n\}$ defined by
\begin{equation}\label{E:Fcluster}
F_j = \prod_{b_{ij} >0} x_i^{b_{ij}} + \prod_{b_{ij} < 0} x_i^{-b_{ij}}.
\end{equation}
These exchange polynomials are always binomials.  Recall that a vector $v \in \Z^m$ is called {\it primitive} if it is non-zero, and the greatest-common-divisor of the entries of $v$ is equal to $1$.

\begin{lemma}
Suppose the column $(b_{1j},b_{2j},\ldots,b_{mj})$ is a primitive integer vector in $\Z^m$.  Then $F_j$ is irreducible in $S[x_1,\ldots,x_n] = \Z[x_1,x_2,\ldots,x_m]$.
\end{lemma}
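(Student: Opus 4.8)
The plan is to reduce the statement to a standard irreducibility criterion for polynomials of the shape $u + v$ where $u,v$ are monomials whose combined exponent vector is primitive. Write $F_j = \prod_{b_{ij}>0} x_i^{b_{ij}} + \prod_{b_{ij}<0} x_i^{-b_{ij}} = \m^+ + \m^-$, where $\m^+$ and $\m^-$ are coprime monomials (no variable divides both, since no $b_{ij}$ is simultaneously positive and negative). First I would observe that since the full column $(b_{1j},\ldots,b_{mj})$ is primitive, the exponent vector of the monomial $\m^+/\m^- = \prod_i x_i^{b_{ij}}$ (a Laurent monomial) is primitive as well; this is the crucial hypothesis and where primitivity enters.

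The main step is then: a binomial $\m^+ + \m^-$ in a Laurent polynomial ring over a UFD, with $\gcd(\m^+,\m^-)=1$ and $\m^+/\m^-$ having primitive exponent vector, is irreducible. I would prove this by passing to the Laurent ring $\Z[x_1^{\pm1},\ldots,x_m^{\pm1}]$, where irreducibility of $\m^+ + \m^-$ is equivalent to irreducibility of $1 + \m$ with $\m = \m^+/\m^-$ a monomial of primitive exponent vector. Primitivity means the sublattice of $\Z^m$ generated by the exponent vector of $\m$ is a direct summand, so after a monomial change of coordinates (an element of $GL_m(\Z)$ acting on the Laurent ring by an automorphism) we may assume $\m = x_1$. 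Then $1 + x_1$ is visibly irreducible in $\Z[x_1^{\pm1},\ldots,x_m^{\pm1}] = \Z[x_2^{\pm1},\ldots,x_m^{\pm1}][x_1^{\pm1}]$, being linear of content $1$ in $x_1$. Transporting back, $\m^+ + \m^-$ is irreducible in the Laurent ring, hence (being a genuine polynomial not divisible by any variable, as each variable divides at most one of $\m^+,\m^-$ but then not the sum) it is irreducible in $S[x_1,\ldots,x_n] = \Z[x_1,\ldots,x_m]$ itself: any factorization there would be a factorization in the Laurent ring, forcing one factor to be a unit in the Laurent ring, i.e. a Laurent monomial, but the only monomial factors of $\m^+ + \m^-$ in $\Z[x_1,\ldots,x_m]$ are units of $\Z$.

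The step I expect to require the most care is the passage between the Laurent ring and the polynomial ring: one must check that an irreducible element of $\Z[x_1,\ldots,x_m]$ which happens to be a unit times an irreducible of the Laurent ring is still irreducible, using that $F_j$ is not divisible by any $x_i$. Concretely, if $F_j = gh$ in $\Z[x_1,\ldots,x_m]$ with $g,h$ non-units, then in the Laurent ring one of them, say $g$, is a unit, hence a monomial $c\prod x_i^{a_i}$ with $c \in \{\pm 1\}$ and $a_i \geq 0$; since $F_j$ is not divisible by any variable we get all $a_i = 0$, so $g = \pm 1$ is a unit in $\Z[x_1,\ldots,x_m]$, a contradiction. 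This, together with the primitive-exponent monomial-change-of-variables argument, completes the proof.
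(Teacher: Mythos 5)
Your proof is correct, but it takes a genuinely different route from the paper's. The paper argues via Newton polytopes: the Newton polytope of $F_j$ is a lattice segment, a nontrivial factorization would express it as a Minkowski sum of two non-point lattice polytopes (necessarily parallel segments), forcing an interior lattice point on the segment and hence a common divisor $d>1$ of the coordinates of its endpoint difference, contradicting primitivity. You instead pass to the Laurent ring, use the fact that a primitive vector extends to a $\Z$-basis to find a monomial automorphism in $GL_m(\Z)$ carrying $1+\m^+/\m^-$ to $1+x_1$, observe that $1+x_1$ is irreducible there, and descend back to the polynomial ring via the unit/monomial analysis; each step checks out, including the final descent, where you correctly use that $F_j$ is divisible by no variable to rule out nontrivial monomial factors. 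The two approaches buy slightly different things: the paper's polytope argument is coordinate-free, makes no use of the ambient ring beyond Gauss's lemma for Newton polytopes, and (as the paper notes) immediately yields irreducibility even over $\mathbb{C}$; your change-of-variables argument is more hands-on and structural, makes the role of primitivity transparent (it is exactly the condition that the exponent vector be part of a lattice basis), and generalizes cleanly to the standard fact that a binomial $\m^+ + c\,\m^-$ is irreducible in a Laurent polynomial ring precisely when its exponent vector is primitive. Either proof is acceptable here.
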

\begin{proof}
Our proof will show that $F_j$ is irreducible even with complex coefficients.  The Newton polytope $N(p)$ of a polynomial $p(x_1,\ldots,x_m) \in \Z[x_1,\ldots,x_m]$ is the convex hull of the vectors $(a_1,a_2,\ldots,a_m)$ for all monomials $x_1^{a_1}\cdots x_m^{a_m}$ that appear in $p$.  It is well-known that we have $N(pq) = N(p) + N(q)$ where addition here is the Minkowski sum.  

The Newton polytope $N(F_j)$ of a binomial is a line segment.  If $F_j$ can be factorized non-trivially, then $N(F_j)$ must be the Minkowski-sum of two 
lattice polytopes which are not points.  (In fact, these polytopes must be line segments  parallel to $N(F_j)$.)  In particular, this would imply that $N(F_j)$ 
contained an interior lattice point.  
But this would in turn imply that there is an integer $d > 1$, namely one plus the nuber of such internal points, which divides all the coordinates of the endpoints of $N(F_j)$.  The result follows.
\end{proof}

We say that $\tB$ is {\it primitive} (or $(\x,\tB)$ is {\it primitive}) if the columns of $\tB$ are primitive integer vectors.

\begin{prop}\label{P:CAseed}
A primitive cluster algebra seed $(\x,\tB)$ gives rise to a Laurent phenomenon algebra seed $(\x,\F)$.
\end{prop}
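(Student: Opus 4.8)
The plan is to verify that the binomials $F_j$ defined by \eqref{E:Fcluster} satisfy the two seed conditions (LP1) and (LP2), given that $(\x,\tB)$ is a primitive cluster algebra seed. Condition (LP2) is immediate: since $B$ is skew-symmetrizable, $b_{jj}$ (being a diagonal entry of a skew-symmetric matrix after rescaling) is zero, so $x_j$ appears in neither product in \eqref{E:Fcluster}, and $F_j$ does not involve $x_j$. For (LP1), I must check that $F_j$ is irreducible in $\P = S[x_1,\ldots,x_n] = \Z[x_1,\ldots,x_m]$ and that it is not divisible by any variable $x_k$.

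The irreducibility is exactly the content of the Lemma immediately preceding Proposition \ref{P:CAseed}, whose hypothesis is that the column $(b_{1j},\ldots,b_{mj})$ is a primitive integer vector; this is guaranteed by the assumption that $(\x,\tB)$ is primitive. So irreducibility comes for free by invoking that Lemma, applied to column $j$.

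Non-divisibility by a variable is where a small subtlety lies, and I expect it to be the main (though still minor) obstacle. The polynomial $F_j$ is a sum of two monomials $\prod_{b_{ij}>0} x_i^{b_{ij}}$ and $\prod_{b_{ij}<0} x_i^{-b_{ij}}$; a variable $x_k$ divides $F_j$ if and only if it divides both monomials, i.e. if $b_{kj}>0$ and $b_{kj}<0$ simultaneously, which is impossible. One has to be slightly careful about the degenerate case where one of the two products is empty, in which case $F_j$ has a constant term $1$, and then certainly no variable divides it; and about the case where the entire column $j$ is zero — but this is excluded by primitivity, since the zero vector is not primitive. Hence in all cases $F_j$ is not divisible by any $x_k$.

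Putting these together: primitivity of column $j$ yields irreducibility via the preceding Lemma and, together with skew-symmetrizability (for (LP2)) and the binomial structure (for non-divisibility by variables), yields that each $F_j$ satisfies (LP1)–(LP2). Therefore $(\x,\F)$ is a Laurent phenomenon algebra seed, which is the assertion of the Proposition. I would also remark in passing that the exchange Laurent polynomials $\hF_j$ in this case are obtained simply by clearing no denominators beyond monomials, consistent with difference (2) in the introduction, though this is not needed for the statement.
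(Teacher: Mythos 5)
Your proof is correct and follows exactly the route the paper intends: the paper states the Proposition without a separate proof precisely because it is the combination of the preceding irreducibility Lemma (whose primitivity hypothesis is supplied by the assumption on $\tB$) with the immediate observations that $b_{jj}=0$ gives (LP2) and that a binomial with a nonzero column cannot be divisible by a variable. Your extra care about the degenerate cases (an empty product, a zero column) is a reasonable elaboration of what the paper leaves implicit.
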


Cluster algebra seed mutation is given as follows.  Let $i \in [n]$.  Then the mutation $\mu_i(\x,\tB)$ of $(\x,\tB)$ at $i$ is given by $(\x',\tB')$ where $x'_i = F_i/x_i$ and $x'_j = x_j$ for $j \neq i$.  Here $F_i$ is given by \eqref{E:Fcluster} and note that the formula uses $F_i$ and not $\hF_i$!  The new exchange matrix is given by
\begin{equation}\label{E:clustermutation}
b'_{kj} = \begin{cases}
-b_{kj} & \mbox{if $k =i$ or $j = i$;} \\
b_{kj} + \dfrac{|b_{ki}|b_{ij}+b_{ki}|b_{ij}|}{2} & \mbox{otherwise.}
\end{cases}
\end{equation}
Given a cluster algebra seed $t = (\x,\tB)$, the {\it cluster algebra} $\A_{CA}(t)$ with initial seed $t$ is the collection of all seeds $t'$ obtained by successive mutation from $t$, together with the subring of $\FF$ generated over $S$ by all cluster variables $\x'$ in any of these seeds.  
The property that the exchange matrix is primitive is preserved under mutation of exchange matrices.
\begin{lemma}\label{L:prim}
Suppose $\tB$ is primitive.  Then so is $\tB' = \mu_i(\tB)$.
\end{lemma}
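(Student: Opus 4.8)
The plan is to unwind the definition of matrix mutation \eqref{E:clustermutation} and show directly that primitivity of the columns is preserved. First I would dispose of the easy columns: for $j = i$, the $i$-th column of $\tB'$ is just the negative of the $i$-th column of $\tB$, so it is primitive because negation does not change the gcd. Thus the real content is in the columns $j \neq i$, where the mutation rule reads
\[
b'_{kj} = b_{kj} + \frac{|b_{ki}|\,b_{ij} + b_{ki}\,|b_{ij}|}{2}.
\]
Note that when $k = i$ this again gives $b'_{ij} = -b_{ij}$, so within a column $j \neq i$ only the $i$-th entry is (possibly) negated and the others are shifted by a multiple of $b_{ij}$; in particular $\{b'_{1j},\ldots,b'_{mj}\}$ and $\{b_{1j},\ldots,b_{mj}\}$ generate the same subgroup of $\Z$ once we throw in the entry $b_{ij}$, because each $b'_{kj} - b_{kj}$ is an integer multiple of $b_{ij}$ and $b'_{ij} = -b_{ij}$.

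The key step is then this gcd computation: let $d = \gcd(b_{1j},\ldots,b_{mj})$ and $d' = \gcd(b'_{1j},\ldots,b'_{mj})$. Since $b'_{ij} = -b_{ij}$ and every other $b'_{kj}$ differs from $b_{kj}$ by a multiple of $b_{ij}$, any common divisor of $b_{1j},\ldots,b_{mj}$ divides $b_{ij}$ hence divides each $b'_{kj}$, giving $d \mid d'$; the symmetric argument (using that mutation is an involution, Proposition above, or just that the same identities run backwards) gives $d' \mid d$. Hence $d = d' = 1$, and the column stays primitive. The $j = i$ column was already handled, so all columns of $\tB'$ are primitive.

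The only mild obstacle is being careful that the fractional-looking expression $\tfrac{1}{2}(|b_{ki}|b_{ij} + b_{ki}|b_{ij}|)$ is genuinely an integer (it equals $b_{ki}b_{ij}$ when $b_{ki},b_{ij}$ have the same sign and $0$ otherwise), so that the ``differs by a multiple of $b_{ij}$'' assertion is literally a statement about integers; this is a one-line sign analysis. No nontrivial input beyond the displayed mutation formula is needed, so I expect the proof to be short — essentially the two divisibility observations above plus the trivial column $i$ case.
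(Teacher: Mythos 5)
Your proof is correct and takes essentially the same route as the paper: the key point in both is that the correction term $\tfrac{1}{2}(|b_{ki}|b_{ij}+b_{ki}|b_{ij}|)$ is an integer multiple of $b_{ij}$ while $b'_{ij}=-b_{ij}$, so any common divisor of the new column divides $b_{ij}$ and hence each old entry. The paper only records the direction $d'\mid d$ (which is all that is needed), whereas you also note $d\mid d'$, but this is an inessential difference.
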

\begin{proof}
Suppose $d$ divides $b'_{kj}$ for all $k$ and some fixed $j$.  Then by \eqref{E:clustermutation}, $d$ divides $b_{ij}$, and so it must divide  $\dfrac{|b_{ki}|b_{ij}+b_{ki}|b_{ij}|}{2}$ for any $k$.  It follows that $d$ divides $b_{kj}$ for all $k$.  Thus if $\tB$ is primitive, so is $\tB'$.
\end{proof}
Assume now that we are in the situation of Proposition \ref{P:CAseed}.  Let $(\x,\tB)$ be any cluster algebra seed of $\A_{CA}(t_0)$ and assume that the exchange polynomials give a legitimate LP algebra seed $(\x,\F)$.   Let the cluster algebra seed nutation of $(\x,\F)$ at $i$ be $(\x',\tB')$.
Let the Laurent phenomenon seed mutation of $(\x,\F)$ at $i$ be $(\x'',\F'')$.  We want to compare $(\x'',\F'')$ with $(\x',\tB')$.  The new cluster variable $x''_i$ in the Laurent phenomenon seed mutation is given by $\hF_i/x_i$ instead of $F_i/x_i$, so we have the equality $x''_i = x'_i$ if and only if $\hF_i = F_i$.

When do we have $F_i = \hF_i$ for a LP algebra seed arising from a cluster algebra seed?  If $x_i$ occurs in $F_j$ then we know that $x_j$ does not appear in $F_i/\hF_i$ by Lemma \ref{L:depend}.  If $x_i$ does not occur in $F_j$, then $x_j$ also does not occur in $F_i$ by the skew-symmetrizability of $B$.  But then $F_i|_{x_j \leftarrow F_j/x} = F_i$  is divisible by $F_j$ only if $F_i \propto F_j$, 
since $F_i$ and $F_j$ are both irreducible.   In fact, for a cluster algebra seed, we have $F_i \propto F_j$ only if $F_i = F_j$.  This suggests we look at the ``coprime" condition of cluster algebras.

Recall from \cite{CA3} that a cluster algebra seed $(\x,\tB)$ is called {\it coprime} if the exchange binomials $F_i$ are coprime in $S[x_1,x_2,\ldots,x_n]$.  If $(\x,\tB)$ satisfies the condition of Proposition \ref{P:CAseed}, then coprimality is equivalent to the condition that $F_i \neq F_j$ for $i \neq j$, and thus implies that $F_i = \hF_i$ for all $i$.  This suggests the following result.

\begin{prop}\label{P:cluster}
Suppose $t_0 = (\x_0,\tB_0)$ is a primitive cluster algebra seed where $\tB_0$ is a full rank matrix. Then the cluster algebra $\A_{CA}(t)$ of geometric type generated by $(\x_0,\tB_0)$ is a Laurent phenomenon algebra, and for every seed in $\A_{CA}(t)$, cluster algebra seed mutation agrees with LP algebra seed mutation.
\end{prop}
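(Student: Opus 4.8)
The plan is a single induction on the number of mutations from $t$, carrying the \emph{strengthened} hypothesis that every seed reachable from $t$ is a primitive cluster algebra seed whose exchange matrix has full rank. The base case is exactly the hypothesis of the Proposition, and by Proposition \ref{P:CAseed} the seed $t=(\x,\tB)$ has an associated LP algebra seed $(\x,\F)$ with $\F$ given by \eqref{E:Fcluster}.

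The first step is to observe that full rank of $\tB$ forces the seed $(\x,\F)$ to be coprime, i.e.\ $F_i\not\propto F_j$ for $i\neq j$. Each $F_i$ is a sum of two monomials built from disjoint sets of variables (a row index contributes to at most one of the two monomials), and this unordered pair of monomials recovers the $i$-th column of $\tB$ up to an overall sign; since $F_i$ is divisible by no variable, any unit of $S$ relating $F_i$ and $F_j$ must be $\pm1$, so $F_i\propto F_j$ would force column $i$ of $\tB$ to equal $\pm$ column $j$, contradicting linear independence of the columns when $i\neq j$. Granting coprimality, the computation in the paragraph preceding the Proposition shows $\hF_i=F_i$ for every $i$: when $b_{ij}\neq 0$, the substitution $x_j\leftarrow F_j/x$ turns $F_i$ into a monomial plus a monomial times $(F_j/x)^{\pm b_{ij}}$, which is not divisible by the non-monomial $F_j$; when $b_{ij}=0$ it leaves $F_i$ unchanged, which is again not divisible by $F_j$ by coprimality. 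In either case no negative exponent is needed, so $\hF_i=F_i$.

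With $\hF_i=F_i$ in hand, the explicit seed comparison carried out in the text just before the statement shows that LP mutation $\mu_i(\x,\F)$ coincides with the LP seed attached via \eqref{E:Fcluster} to the cluster mutation $\mu_i(\x,\tB)=(\x',\tB')$: the new cluster variable is $F_i/x_i=x'_i$, and the transformation of exchange polynomials reproduces \eqref{E:clustermutation} after clearing common monomial factors. By Lemma \ref{L:prim}, $\tB'$ is again primitive; and since the matrix mutation $\mu_i$ factors as $\tB'=E\tB E'$ with $E$ ($m\times m$) and $E'$ ($n\times n$) invertible integer matrices (the standard Fomin--Zelevinsky factorization), $\tB'$ again has full rank. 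This closes the induction. Choosing the binomial \eqref{E:Fcluster} as the representative exchange polynomial at each seed, the LP seeds reachable from $(\x,\F)$ are precisely the cluster seeds of $\A_{CA}(t)$; they form a connected $n$-regular graph under mutation, and the two mutation rules agree at each vertex. Since $\A_{CA}(t)$ is by definition the subring of $\FF$ generated over $S$ by all cluster variables appearing in these seeds, $(\A_{CA}(t),\{(\x,\F)\})$ is a Laurent phenomenon algebra and cluster mutation equals LP mutation throughout.

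The only ingredient not already present in the excerpt is the invariance of the rank of $\tB$ under matrix mutation, which is standard and follows from the factorization $\tB'=E\tB E'$ through invertible integer matrices; everything else is assembled from Proposition \ref{P:CAseed}, Lemma \ref{L:prim}, Lemma \ref{L:depend}, and the seed-by-seed comparison preceding the statement. The main (and only real) subtlety is to propagate full rank rather than coprimality through the induction: coprimality by itself is not obviously preserved under cluster mutation, whereas full rank is preserved and implies coprimality, which is exactly what is needed to keep $\hF_i=F_i$ at every step.
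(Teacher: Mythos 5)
Your argument is correct and follows essentially the same route as the paper: establish coprimality of every seed mutation-equivalent to $t$, deduce $\hF_i=F_i$ everywhere, and then invoke the seed-by-seed comparison preceding the statement together with Lemma \ref{L:prim}. The only difference is that where the paper simply cites \cite[Proposition 1.8]{CA3} for the fact that all seeds mutation-equivalent to a full-rank seed are coprime, you unpack that citation (full rank implies coprime, and full rank is preserved under matrix mutation) inside your induction.
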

\begin{proof}
By \cite[Proposition 1.8]{CA3}, all seeds mutation equivalent to $(\x_0,\tB_0)$ are coprime.  By Lemma \ref{L:prim}, all these seeds are also primitive.  Thus every seed of $\A_{CA}(t_0)$ is a LP algebra seed which in addition satisfies $\hF_i = F_i$ for all $i \in [n]$. 

Let $(\x,\tB)$ be any cluster algebra seed of $\A_{CA}(t_0)$ and let $(\x,\F)$ denote the corresponding LP algebra seed.  Let the cluster algebra seed obtained from mutation at $i$ be $(\x',\tB')$ and let the Laurent phenomenon seed mutation of $(\x,\F)$ at $i$ be $(\x'',\F'')$.  Since $\hF_i = F_i$, we have $x'_i = x''_i$.

We now check that $F'_j = F''_j$.  First, suppose that $F_j$ does not involve $x_i$.  This happens if and only if $b_{ij} = 0$ and directly from \eqref{E:clustermutation} we have that the $j$-th column of $\tB'$ is the same as the $j$-th column of $\tB$.  It follows that in this case we have $F''_j = F_j = F'_j$.  

Now suppose that $F_j$ does involve $x_i$ and so $b_{ij} \neq 0$.  Suppose $F_j = A+B$ where $A$ is the monomial involving $x_i$.  We now calculate $F''_j$ directly from the definitions in Section \ref{sec:seeds}.  By the skew-symmetrizability condition of $\tB$, we have that $b_{ji} \neq 0$.  Thus $F_i$ involves $x_j$, and so $F_i|_{x_j \leftarrow 0}$ is actually a monomial (rather than a binomial).  Then $G_j = A' + B$ as defined in \eqref{E:G} is the sum of two monomials.  Now let us consider the occurrences of $x_k$ for $k \neq i$ in $G_j$.  We calculate that as long as $b_{ki}$ and $b_{ij}$ have the same sign then $A'/A$ has a factor of $x_k^{b_{ki}b_{ij}}$.  If $b_{kj}$ has the same sign as $b_{ij}$ (or $b_{kj} = 0$) then all powers of $x_k$ in $G_j$ occur in the same monomial.  Otherwise if $b_{kj} \neq 0$ and has opposite sign to $b_{ij}$, then powers of $x_k$ occur in both monomials $A'$ and $B$ of $G_j$.  To obtain $F''_j$ any common factors of $x_k$ are factored out.  A case-by-case computation shows that this is exactly what happens in \eqref{E:clustermutation}, giving the equality $F''_j = F'_j$. Thus $(\x'',\F'')$ is the Laurent phenomenon seed associated to $(\x',\tB')$.  
\end{proof}

The conditions of Proposition \ref{P:cluster} holds for all cluster algebras which have an initial seed with {\it principal coefficients}.  A seed $(\x,\tB)$ has principal coefficients if the matrix $\tB$ is $2n \times n$, and the bottom $n \times n$ submatrix is the identity matrix.  As shown in \cite{CA4}, ``one can think of principal coefficients as a crucial special case providing control over cluster algebras with arbitrary coefficients''.

\begin{cor}\label{C:cluster}
Every cluster algebra with principal coefficients is a Laurent phenomenon algebra.
\end{cor}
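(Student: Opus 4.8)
The plan is to derive the corollary as an immediate consequence of Proposition \ref{P:cluster}, by checking that an initial seed with principal coefficients automatically satisfies the two hypotheses of that proposition: primitivity of the exchange matrix, and full rank. Recall that a seed with principal coefficients is, in the notation of Section \ref{sec:LPCL}, the case $m = 2n$, so that the exchange matrix $\tB$ is $2n \times n$ with top block $B$ skew-symmetrizable and bottom block the $n\times n$ identity matrix, and the coefficient ring is $S = \Z[x_{n+1}^{\pm 1},\ldots,x_{2n}^{\pm 1}]$. The first thing I would note is that this $S$ is a Laurent polynomial ring over $\Z$, hence a unique factorization domain, so it is a legitimate coefficient ring in the sense of Section \ref{sec:seeds}.

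Next I would check primitivity. For each $j \in [n]$, the $j$-th column of $\tB$ has entry $1$ in row $n+j$ (this is the contribution of the identity block), so it is a nonzero integer vector the greatest common divisor of whose entries divides $1$ and is therefore equal to $1$. Thus every column of $\tB$ is a primitive integer vector, i.e. $\tB$ is primitive, and by Proposition \ref{P:CAseed} the associated exchange polynomials $F_1,\ldots,F_n$ form a genuine Laurent phenomenon algebra seed $(\x,\F)$. Then I would check that $\tB$ has full rank: since it contains the $n\times n$ identity matrix as its bottom block, its $n$ columns are linearly independent over $\Q$, so $\tB$ has rank $n$.

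With $t = (\x,\tB)$ primitive and $\tB$ of full rank, Proposition \ref{P:cluster} applies directly and shows that the cluster algebra $\A_{CA}(t)$ of geometric type generated by $t$ is a Laurent phenomenon algebra (with cluster-algebra and LP seed mutation agreeing at every seed). Since every cluster algebra with principal coefficients is, by definition, generated by a seed of this form, the corollary follows.

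I do not anticipate a real obstacle here: all the substantive work is already contained in Propositions \ref{P:CAseed} and \ref{P:cluster}, and the corollary is precisely the observation that principal coefficients force the two hypotheses ``primitive'' and ``full rank'' to hold for free. The only point meriting a moment's attention is bookkeeping — making sure the coefficient-ring convention $S = \Z[x_{n+1}^{\pm 1},\ldots,x_m^{\pm 1}]$ of Section \ref{sec:LPCL} specializes correctly to the principal-coefficients normalization $m = 2n$ — but this is entirely routine.
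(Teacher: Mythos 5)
Your proposal is correct and matches the paper's own (implicit) argument: the paper derives the corollary by noting that a principal-coefficients seed has $\tB$ of size $2n\times n$ with identity bottom block, which forces both primitivity and full rank, so Proposition \ref{P:cluster} applies. Your verification of those two hypotheses is exactly the intended reasoning.
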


\begin{remark}
The full rank and primitive conditions on the exchange matrix $\tB$ can be thought of as certain non-degeneracy conditions on the cluster algebra which have appeared in a number of places in the literature.  For example, constructions by Geiss, Leclerc, and Schroer \cite{GLS} of cluster algebras that are not unique factorization domains fail these conditions.
\end{remark}

\begin{example}
Let us finish with an example of an LP algebra and a cluster algebra which have the same initial seed but are different.  Working with $S = \Z$ and $\FF = \Q(a,b,c)$, consider the following initial seed:
$$t = \{(a, 1+b), (b, a+c), (c, 1+b)\}.$$
(Note that $F_a = F_c$.)  Then there are four more variables in this LP algebra, given by 
$$d = \frac{1+b}{ac}, \;\; e=\frac{a+c}{b}, \;\; e = \frac{a+c+bc}{ab}, \;\; f= \frac{a+c+ab}{bc},$$
and the cluster complex consists of the faces $abc$, $ace$, $cef$, $aeg$, $abd$, $bcd$, $cfd$, $efd$, $egd$, $agd$.
The two other kinds of clusters that appear are 
$$\{(a, e+b), (e, a+c), (c, e+b)\} \;\;\; \text{and} \;\;\; \{(d, 1+b), (b, 1+c^2d), (c, 1+b)\}.$$
On the other hand, the cluster algebra this seed produces is a type $A_3$ cluster algebras with a total of $9$ variables and $14$ clusters.

\end{example}

\section{The caterpillar lemma and Laurent phenomenon} \label{sec:cat}
In this section we establish the namesake property of Laurent phenomenon algebras:

\begin{theorem}\label{T:Laurent}

Let $\A$ be a Laurent phenomenon algebra and $t = (\x,\F)$ be a seed of $\A$.  Then every cluster variable of $\A$ belongs to the Laurent polynomial ring $\L(t) = S[x_1^{\pm 1},\ldots,x_n^{\pm 1}]$.
\end{theorem}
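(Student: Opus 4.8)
The plan is to adapt Fomin and Zelevinsky's caterpillar lemma argument from \cite{FZLP} to the LP setting. The key reduction is that it suffices to prove Laurentness for cluster variables obtained from $t$ by a bounded-depth sequence of mutations, and then organize the induction along a ``caterpillar tree'' of seeds. Concretely, I would first establish that any cluster variable $z$ of $\A$ can be reached from $t$ by a path of mutations $t = t_0, t_1, \ldots, t_\ell$ in the exchange graph, and that it is enough to control three consecutive seeds at a time. So I would state and prove a \emph{caterpillar lemma}: given seeds related by mutations $\mu_i$, then $\mu_j$, then $\mu_i$ again (where $i \neq j$), with appropriate compatibility of exchange polynomials along the chain, every cluster variable appearing in the last seed is a Laurent polynomial in the first cluster. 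The global Laurent phenomenon then follows by inducting on the distance in the exchange graph: writing $z$ in the cluster of $t_\ell$, pull it back step by step, at each stage using the three-seed caterpillar lemma to move the ``Laurentness guarantee'' one step closer to $t$.

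The heart of the matter is the three-seed lemma. Here I would write $t = (\x, \F)$, mutate at $i$ to get $t' = (\x', \F')$ with $x_i' = \hF_i/x_i$, mutate at $j \neq i$ to get $t'' = (\x'', \F'')$ with $x_j'' = \hF'_j/x_j$, and then mutate at $i$ again to get $t'''$ with new variable $x_i''' = \hF''_i/x_i''$. A cluster variable $w$ of $t'''$ is, by the Laurent phenomenon for the single mutation step $t'' \to t'''$ (which is trivial since the new variable is literally a Laurent monomial times an exchange polynomial), a Laurent polynomial in $\x''$. I then need to show $w \in \L(t)$. Write $w = N(x_1, \ldots, x_{i-1}, x_i', x_{i+1}, \ldots, x_n)$ — wait, more carefully: $w$ is Laurent in $\x'' = \{x_1, \ldots, \hat x_i, \ldots, \hat x_j, \ldots, x_n\} \cup \{x_i', x_j''\}$. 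I substitute $x_j'' = \hF_j'/x_j$ to rewrite $w$ as a Laurent polynomial in $\x'$, then substitute $x_i' = \hF_i/x_i$ to rewrite it in terms of $\x$. The issue is that substituting $\hF_i/x_i$ can introduce denominators that are powers of $\hF_i$ — but these get cancelled because $\hF''_i$ is, by Lemma \ref{L:hFi}, equal to $\hF_i$ (as elements of $\L$), and the defining property \eqref{hatF} of exchange Laurent polynomials guarantees precisely that the relevant substitutions stay polynomial in the exchange-polynomial variable and are not divisible by it. This is exactly the role of conditions (LP1)--(LP2) together with the normalization of $\hF$, and the non-divisibility clause is what prevents spurious $\hF_i$'s in the denominator from failing to cancel.

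The main obstacle — and what the bulk of the proof must handle — is controlling the denominators when a mutated exchange polynomial $F_j'$ (or $\hF_j'$) depends on $x_i'$: after substituting $x_i' = \hF_i/x_i$ into $\hF_j'$, we get a Laurent expression in $\x$ whose denominator is a power of $x_i$ and possibly a power of $\hF_i$, and we need the power of $\hF_i$ to vanish. This is where I would use the explicit description of mutation of exchange polynomials in \eqref{E:G} and the subsequent removal of common factors with $\hF_i|_{x_j \leftarrow 0}$: the common-factor removal is designed so that $G_j$ recovers $F_j$ after the reverse substitution (as exploited in the proof of the involutivity proposition), which forces the relevant $\hF_i$-divisibility to work out. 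I expect to organize this as a sequence of claims: (a) $\hF_i = \hF_i'' = \hF_i'''$-ish compatibility along the caterpillar; (b) a ``no $\hF_i$ in the denominator'' claim for each exchange Laurent polynomial after the double-$i$ substitution; (c) assembling these to conclude $w \in \L(t)$. Steps (b) is the technically delicate one, and careful bookkeeping with Lemma \ref{L:subs} and Lemma \ref{L:depend} will be essential there.
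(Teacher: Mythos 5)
Your overall strategy --- reduce to a three-step caterpillar $t_0 \to t_1 \to t_2 \to t_3$ given by mutations $\mu_i,\mu_j,\mu_i$, and induct along a mutation path to the target seed --- is the same as the paper's, and your analysis of why the new cluster variables $u$ of $t_2$ and $v$ of $t_3$ lie in $\L(t_0)$ (substitute back through the exchange Laurent polynomials and use the non-divisibility clause in \eqref{hatF} to cancel spurious powers of $\hF_i$) matches the first halves of Lemmas \ref{lem:u} and \ref{lem:v}. The gap is in how you close the induction. Knowing that the cluster variables of $t_3$ lie in $\L(t_0)$ does not let you ``pull the Laurentness guarantee back one step at a time'': if $w$ is Laurent in the cluster of $t_3$, substituting $u,v\in\L(t_0)$ only yields $w=g/(u^bv^c)$ with $g\in\L(t_0)$, because $1/u$ and $1/v$ are not themselves Laurent in $\x$. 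A single chain of such substitutions never lands in $\L(t_0)$, and your reduction to ``bounded-depth'' sequences is not available, since cluster variables arbitrarily far from $t$ must be handled.

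The missing ingredient is the pair of coprimality statements $\gcd(z,u)=1$ and $\gcd(z,v)=1$ in the unique factorization domain $\L(t_0)$ (the second halves of Lemmas \ref{lem:u} and \ref{lem:v}). The paper's induction is on the length of the spine from $t_0$ to $t_{\head}$: both $t_1$ and $t_3$ are strictly closer to $t_{\head}$ than $t_0$ (with $t_3$ obtained by a detour off the spine, mutating at $i$ again after the first two spine steps), so the induction hypothesis gives \emph{two} representations $w=f/z^a$ and $w=g/(u^bv^c)$ with $f,g\in\L(t_0)$; coprimality of $z$ with both $u$ and $v$ then forces $z^a\mid f$, i.e.\ $w\in\L(t_0)$. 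These gcd computations are not an afterthought: establishing $\gcd(z,v)=1$ is the most delicate part of the argument, requiring a case analysis according to whether $R$ depends on $u$ and whether $\hQ(z)$ depends on $z$, including the degenerate situation $P\propto Q\propto R$ handled by Lemma \ref{L:allequal}. Your proposal as written never identifies this mechanism, so the induction does not close.
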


For LP algebras of rank $n \leq 1$ the result is trivial, so we assume $n \geq 2$ from now on.  Our proof follows the same strategy as Fomin and Zelevinsky's work \cite{CA1,FZLP}.
We prove an analogue of Fomin and Zelevinsky's Caterpillar Lemma.  Let $t_0 = t$ contain cluster variables $x,y$, and let $t_1,t_2,t_3$ be the seeds obtained by mutating first at $x$, to get $z$, then at $y$ to get $u$, and finally at $z$ to get $v$, as in the following diagram:
$$
\overset{x,y}{\bullet} \overset{\hP}{\text{---------}}\overset{z,y}{\bullet} \overset{\hQ}{\text{---------}} \overset{z,u}{\bullet} \overset{\hR}{\text{---------}} \overset{v,u}{\bullet}
$$
Here $\hP,\hQ,\hR$ are the exchange Laurent polynomials of the respective mutations, so $xz = \hP$, $yu = \hQ$ and $zv = \hR$.  We shall think of the Laurent polynomials $\hP,\hQ,\hR$ as polynomials in one special variable: $\hP=\hP(y)$, $\hQ = \hQ(z)$ and $\hR = \hR(u)$.  

Let $\L =\L(t_0)$ denote the Laurent polynomial ring for the original cluster containing $x$ and $y$.  In the following, $\gcd$ is always taken inside $\L$.  The greatest common divisor is defined up to a unit, so saying that $\gcd(a,b) = 1$, is the same as saying that the only elements that divide both $a$ and $b$ are units.
\begin{lemma} \label{lem:u}
We have \begin{itemize}
\item
$u\in \L$,
\item
$\gcd(z,u) = 1$.
\end{itemize}
\end{lemma}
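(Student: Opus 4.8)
The plan is to unwind the definitions at $t_1 = \mu_x(t_0)$ and $t_2 = \mu_y(t_1)$ and use the explicit mutation formulas. Here $u$ is the cluster variable produced by mutating $t_1$ at $y$, so $u = \hQ(z)/y$ where $\hQ = \hat{G}_y$ is the exchange Laurent polynomial of $y$ in the seed $t_1$. To see $u \in \L$, I would start from $G_y$, the exchange polynomial of $y$ in $t_1$, and track how it was produced from $F_y$ (the exchange polynomial of $y$ in $t_0$) by the mutation at $x$: either $F_y$ does not involve $x$, in which case $G_y \propto F_y$ is already a polynomial in the $x_k$ with $k\neq x$ and involves neither $x$ nor $z$, so $\hQ$ is obtained from it by multiplying by a Laurent monomial in those same variables and hence lies in $\L$; or $F_y$ involves $x$, in which case by formula \eqref{E:G} we substitute $x \leftarrow (\hP|_{y\leftarrow 0})/z$ into $F_y$, remove common factors, and multiply by a monomial $M$ in the $x'_k$. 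The key point is that $\hP = \hat F_x$ and $\hP|_{y\leftarrow 0}$ are Laurent polynomials in the original variables $x_k$ ($k \neq x$), not involving $y$, so after the substitution $G_y$ (hence $H_y$, hence $\hQ$) becomes a Laurent polynomial in $z$ and the $x_k$ ($k\neq x,y$); since $z = \hP/x = \hP(y)/x \in \L$ (as $\hP$ is a Laurent polynomial in the $x_k$), we get $\hQ \in \L$, and therefore $u = \hQ(z)/y \in \L$.

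For the second bullet, $\gcd(z,u) = 1$ in $\L$, I would argue as follows. Both $z$ and $u$ are non-units in $\L$. Suppose a non-unit $g \in \L$ divides both. Since $\L$ is a Laurent polynomial ring over the UFD $S$, we may take $g$ to be an irreducible polynomial in $\P = S[x_1,\ldots,x_n]$ not divisible by any variable. Now $xz = \hP = x_1^{c_1}\cdots\widehat{x}\cdots x_n^{c_n}\,F_x$ with $c_k \le 0$, so $z \propto x^{-1}\cdot(\text{monomial})\cdot F_x$, and since $g$ is variable-free-of-monomials and irreducible, $g \mid z$ forces $g \propto F_x$. So it suffices to show $F_x \nmid u$ in $\L$. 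Recall $u = \hQ(z)/y$ where $\hQ = \hat G_y$, and by the defining property \eqref{hatF} of exchange Laurent polynomials applied in the seed $t_1$ — using Lemma \ref{L:subs} — the substitution $z \leftarrow \hP/x$ (equivalently $z \leftarrow \hat G_x/x$, noting $\hat G_x = \hP$) turns $\hQ$ into a Laurent polynomial \emph{not divisible by} $\hat G_x = \hP$, hence not divisible by $F_x$ (which differs from $\hP$ by a monomial times a unit). But $\hQ(z)|_{z\leftarrow \hP/x} = y\,u|_{z\leftarrow\hP/x}$, and after the substitution $u$ becomes a Laurent polynomial; if $F_x$ divided $u$ in $\L$, it would divide $u|_{z\leftarrow \hP/x}$ (since $F_x = F_x|_{z\leftarrow\hP/x}$ does not involve $z$ or $x$... more precisely $F_x$ does not involve $x$ and substituting for $z$ doesn't affect it), hence divide $\hQ|_{z\leftarrow\hP/x}$, contradicting \eqref{hatF}. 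Therefore $\gcd(z,u)=1$.

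The main obstacle I expect is the careful bookkeeping in the second bullet: one must identify which exchange polynomial plays which role ($\hP = \hat G_x$ in the seed $t_1$, $\hQ = \hat G_y$ in $t_1$), and correctly invoke the non-divisibility clause of \eqref{hatF} — which is stated as a property of $\hat F_i$ after substituting $x_j \leftarrow F_j/x$ — in the form needed here, namely that $\hat G_y|_{z \leftarrow \hat G_x/x}$ is not divisible by $\hat G_x$. Lemma \ref{L:subs} is exactly what licenses replacing $G_x/x$ by $\hat G_x/x = \hP/x$ in that substitution and replacing divisibility-by-$G_x$ by divisibility-by-$\hat G_x$. A secondary subtlety is making sure the reduction ``$g \mid z \Rightarrow g \propto F_x$'' is valid: this uses that $\hP/x$ has the form (unit)$\cdot$(Laurent monomial)$\cdot F_x$ with $F_x$ irreducible in $\P$ and not divisible by any variable, so its only irreducible-polynomial divisors in $\L$ (up to units and monomials) are $F_x$ itself. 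Once these identifications are pinned down, both statements follow from the definitions without further computation.
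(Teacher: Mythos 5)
Your overall strategy is the paper's: both bullets are meant to follow from the defining condition \eqref{hatF} of the exchange Laurent polynomial $\hQ$ in the seed $t_1$, transported via Lemma \ref{L:subs} and the identity $z=\hP/x$. Your treatment of the second bullet is essentially the paper's argument (reduce $\gcd(z,u)$ to whether the irreducible $P$ divides $\hQ(\hP/x)$ in $\L$, then quote the non-divisibility clause of \eqref{hatF} together with Lemma \ref{L:subs} and the identification $\hF'_x=\hP$ from Lemma \ref{L:hFi}), and it is correct.

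The first bullet, however, has a genuine gap. You deduce $\hQ(z)\in\L$ from ``$\hQ$ is a Laurent polynomial in $z$ and the $x_k$, and $z=\hP/x\in\L$.'' That inference is invalid: by definition $\hQ=z^{a}\cdot(\text{monomial})\cdot F'_y$ with $a\le 0$, and when $a<0$ the substitution introduces $z^{-1}=x/\hP$, which is \emph{not} in $\L$ because $\hP$ is not a monomial. Negative $a$ really occurs whenever $P$ does not depend on $y$; for instance in the seed $\{(x,P),(y,P)\}$ with $P\in S$ one gets $\hQ=P/z$, and $\hQ(z)=xy$ lands in $\L$ only because the factor $P$ in the numerator cancels the $P$ in the denominator of $z^{-1}$. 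That cancellation is exactly the content of the ``is a Laurent polynomial'' half of condition \eqref{hatF} applied to $\hQ$ with respect to the variable $z$ in $t_1$ --- the very same clause whose ``not divisible by'' half you correctly invoke for the second bullet. So the statement is true and the repair is one line (cite \eqref{hatF} and Lemma \ref{L:subs} for the first bullet exactly as you do for the second; this is the paper's proof), but as written your justification of $u\in\L$ does not go through.
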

\begin{proof}
To show that $u \in \L$, it suffices to show that $\hQ(z) \in \L$.  
But by \eqref{hatF} and Lemma \ref{L:subs}, $\hQ(z)|_{z \leftarrow \hP/x}$ lies in $\L$.  Thus the claim follows from the equality $z = \hP/x$ in $\FF$.

Now, $x$ and $y$ are units in $\L$ and $u = \hQ/y$ and $z = \hP/x$ so
$\gcd(z,u) = \gcd(\hP,\hQ) = \gcd(P,\hQ)$.
Again by \eqref{hatF} and Lemma \ref{L:subs}, $\hQ(x) = \hQ(z)|_{z \leftarrow \hP/x}$ is not divisible by $P$ in $\L$.  Since $P$ is irreducible in $\L$, it follows that $\gcd(P,\hQ) = 1$.
\end{proof}

Recall that $f \propto g$ means that $f$ and $g$ differ multiplicatively by a unit in $S$.  

\begin{lemma} \label{L:allequal}
Suppose that considered as elements of the ambient field $\FF$, we have that $P \propto Q \propto R$ and hence the polynomials do not depend on $y,z,u$ respectively.  Then we have
$$
z = \frac{P}{xyM}, \qquad u \propto x, \qquad \mbox{and } v \propto y,
$$ 
where $M$ is a monomial not involving $x,y,z,v,u$.
\end{lemma}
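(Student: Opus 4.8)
The plan is to track the three mutations explicitly under the hypothesis $P\propto Q\propto R$, using the fact that in this degenerate situation every exchange Laurent polynomial is essentially $P$ (up to a monomial and a unit), so the whole computation collapses.

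\medskip

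First I would record what the hypothesis buys us. Since $P$ does not involve $y$, the mutation $\mu_x$ at $x$ replaces $x$ by $z = \hP/x$ and does not touch $F_y = Q \propto P$; thus in $t_1$ the exchange polynomial of $y$ is still $\propto P$, and in particular it does not involve $z$. Hence $\hQ(z)$, viewed as an exchange Laurent polynomial of $y$ in $t_1$, is obtained from $Q\propto P$ by multiplying by a Laurent monomial in the variables of $t_1$ other than $y$, namely in $z, x_3,\dots,x_n$ (here I am using $z$ in place of $x$). By Lemma~\ref{lem:hcrit} (applied with the further observation that $P$ does not depend on $z$), no negative power of $z$ is needed and in fact the $z$-exponent is $0$; so $\hQ \propto z^{0}\, m'\, P$ for some monomial $m'$ in $x_3,\dots,x_n$ with non-positive exponents. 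Actually one must be slightly careful: $\hQ$ as an element of $\L(t_1)$ must be a genuine polynomial divided by a monomial with the $x_j$-divisibility condition, so $\hQ \propto P / m$ where $m$ is the monomial in $x_3,\dots,x_n$ absorbing whatever variables divide $P$; since $F_y = P$ already satisfies (LP1) in the original seed it is not divisible by any variable, so in fact $m = 1$ and $\hQ \propto P$. This is the key simplification.

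\medskip

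Now the three exchange relations read $xz = \hP = P/(\text{monomial in }x_3,\dots,x_n \text{ and possibly nothing})$. To pin down $\hP$ precisely: $\hP$ is the exchange Laurent polynomial of $x$ in $t_0$, so $\hP \propto x_1^{a_1}\cdots\widehat{x}\cdots x_n^{a_n} P$ with $a_j \le 0$; since $P = F_x$ satisfies (LP1), the only way to pick up negative exponents is from the substitution/divisibility test against the $F_j$, but with $P \propto Q \propto R$ one checks this test forces the exponent of $y$ to be $-1$ (because $F_y = Q \propto P$ and $P|_{y \leftarrow P/x}$ is divisible by exactly the first power of $P$) and all other $a_j = 0$. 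This gives $\hP = P/(yM)$ for a monomial $M$ in the variables other than $x,y$, i.e. $z = P/(xyM)$, which is the first claimed formula. Then $u = \hQ/y \propto P/y$, and since $z \propto P/(xyM)$ we get $u \propto xz M \propto x$ (using that $x,M$ are units in... no — rather, $u\propto P/y$ and $P = xyzM$ as an identity in $\FF$, so $u \propto xzM/1$; but more directly $u \propto P/y$ and $xy z M = P$ gives $u\propto xzM$, and one more application of the same reasoning to the third mutation, with $\hR \propto P$ by the identical argument in $t_2$, yields $v = \hR/z \propto P/z \propto xyM \propto y$ after substituting). I would streamline this last paragraph by just carrying the single identity $P = xyzM$ (as elements of $\FF$) together with $\hP \propto P$, $\hQ \propto P$, $\hR \propto P$ and reading off $u = \hQ/y$, $v = \hR/z$.

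\medskip

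The main obstacle I anticipate is the bookkeeping of exactly which monomial corrections appear in $\hP$, $\hQ$, $\hR$ — in particular showing the exponent of $y$ in $\hP$ is precisely $-1$ (not $0$ and not $\le -2$), and that $\hQ$ and $\hR$ carry no extra monomial at all. Getting the exponent right is where the hypothesis $P \propto Q \propto R$ is really used: it is what makes the substituted polynomial $P|_{y \leftarrow P/x}$ divisible by $P$ to exactly the first power. Once those three normalizations are nailed down, the identities $z = P/(xyM)$, $u \propto x$, $v \propty y$ drop out of the definitions $z=\hP/x$, $u=\hQ/y$, $v=\hR/z$ together with the single relation $P = xyzM$; I would present that part as a short direct computation rather than belaboring it.
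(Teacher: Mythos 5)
There is a genuine gap, and it sits exactly where you predicted the difficulty would be: the normalization of $\hQ$ (and of $\hP$). You argue that $\hQ \propto Q \propto P$ with no monomial correction, on the grounds that $Q$ satisfies (LP1) and that Lemma \ref{lem:hcrit} forces the $z$-exponent to be $0$. Both steps are wrong. The monomial in $\hF_j$ is governed by the divisibility test \eqref{hatF}, not by (LP1): the $z$-exponent of $\hQ$ is minus the largest power of $F_z$ (the exchange polynomial of $z$ in $t_1$, which is $P$) dividing $Q|_{z \leftarrow P/x'} = Q$, and since $Q \propto P$ is irreducible this power is exactly $1$, so $\hQ = Q/(zM')$ with a genuine factor of $z$ in the denominator. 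Lemma \ref{lem:hcrit} cannot be invoked here because its hypothesis $F_i \neq F_j$ fails: in $t_1$ we have $F_z \propto F_y$. For the same reason your claim that all exponents $a_j$ with $j \neq y$ vanish in $\hP$ fails whenever some further cluster variable $w$ has $F_w \propto P$; the correct statement is $\hP = P/(yM)$ with $M$ the product of exactly those $w$.

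These are not cosmetic slips: they break the conclusion. With your $\hQ \propto P$ one gets $u = \hQ/y \propto P/y = xzM$, which is not proportional to $x$; indeed your own computation stalls at $u \propto xzM$ and $v \propto xyM$ without cancelling the extra factors, and the parenthetical where you start to justify the cancellation trails off. The paper's proof computes $u = Q/(yzM') \propto xM/M'$ and then observes that $M = M'$, because the variables occurring in the correction monomial of $\hP$ (resp.\ $\hQ$) are exactly those $w$ with $F_w \propto P$ (resp.\ $F_w \propto Q$), and these sets coincide since $P \propto Q$. That identification $M = M'$ is the real content of the lemma and is absent from your argument; the same mechanism is what yields $v \propto y$.
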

\begin{proof}
By definition, the exchange polynomials for $x$ and $y$ in $t_0$ are $P$ and $Q$ respectively.  Using the definition \eqref{hatF}, we have $\hP = \dfrac{P}{yM}$, where $M$ is a monomial in the other (not $x$ or $y$) cluster variables of $t_0$.  This gives the formula for $z$.  

Similarly, $\hQ = \dfrac{Q}{zM'}$, giving $u = \dfrac{Q}{yzM'} \propto \dfrac{xM}{M'}$.  From the definition \eqref{hatF} and the assumption $P \propto Q$, we see that the cluster variables $w$ that occur in $M$ are also exactly the ones occurring in $M'$, with the same degree. 
So we have $M = M'$, and $u \propto x$.  The argument for $v \propto y$ is the same.\end{proof}

\begin{lemma} \label{lem:v}
We have
\begin{itemize}
\item
$v \in \L$;
\item
$\gcd(z,v) = 1$.
\end{itemize}
\end{lemma}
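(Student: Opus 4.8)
The plan is to mimic the structure of the proof of Lemma \ref{lem:u}, but now for the seed $t_2 = (z,u)$ rather than $t_1 = (z,y)$, and then transport the conclusion back to $\L = \L(t_0)$ using what we already know. Concretely, since $zv = \hR$ and we wish to show $v \in \L$, it suffices to show $\hR(u) \in \L$, i.e. that the exchange Laurent polynomial $\hR$ for $z$ in the seed $t_2$, regarded as a polynomial in $u$ with Laurent-monomial coefficients in the remaining variables, becomes an element of $\L$ after the substitutions $u = \hQ/y$ and $z = \hP/x$. The first substitution is handled by the defining property \eqref{hatF} of $\hR$ in the ring $\L(t_2) = \L(\mu_u(t_1))$: it guarantees $\hR(\hQ/y)$ lies in the Laurent polynomial ring over $z$ and the other variables. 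The remaining task is then to clear the variable $z$, for which I would use $z = \hP/x$ together with Lemma \ref{lem:u} (which says $u \in \L$ and $\gcd(z,u)=1$) and Lemma \ref{L:subs}. The subtlety is that a negative power of $z$ could appear in $\hR$; but by Lemma \ref{lem:hcrit}, the exponent of $z$ in $\hR$ is chosen maximal so that $\hR$ is a Laurent polynomial in $\L(\mu_z(t_2)) = \L(t_3)$, and one checks that this forces, after substituting $z=\hP/x$, at worst a bounded negative power of $P$, which is then cancelled — this is exactly where $\gcd(z,u)=1$ and the non-divisibility-by-$F_z$ clause of \eqref{hatF} are needed. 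I would organize this as: (i) $\hR(\hQ/y)\in S[z^{\pm1},\dots]$; (ii) substitute $z=\hP/x$ and use that $P$ is irreducible in $\L$ and does not divide the relevant coefficients, to conclude the result lies in $\L[ \, \hP^{-1}]$ with no actual pole, hence in $\L$.

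For the second bullet, $\gcd(z,v)=1$: as in Lemma \ref{lem:u} write $z = \hP/x$ and $v = \hR/z$, with $x$ a unit in $\L$. Since $z$ itself appears in the denominator of $v$, I would instead argue directly: $\gcd(z,v)$ divides $z\,v = \hR$, and $\hR \propto P^{-\text{(something)}} \cdot (\text{polynomial in }u\text{ over }\L)$ after the substitution, so it suffices to show that no irreducible factor of $z$ — equivalently, no irreducible factor of $P$ in $\L$, and since $P$ is irreducible, just $P$ itself — divides $v$. Suppose $P \mid v$ in $\L$. Then $P \mid z v = \hR(u)$ evaluated with $z = \hP/x$; but $z = \hP/x \propto P/(xyM)$, so $P \mid z$ already, and I must show $P \nmid v = \hR/z$, i.e. that $P$ divides $\hR$ (as a substituted element of $\L$) to order exactly one more than it divides... — more cleanly, I would trace through: $v = \hR(\hQ/y)/z$, and the defining non-divisibility in \eqref{hatF} for the seed $t_2$ (divisibility tested after $u \leftarrow \hQ_u/\text{new var}$) combined with Lemma \ref{lem:u} ($\gcd(z,u)=1$) should yield that $P$ does not divide $v$.

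The main obstacle I anticipate is bookkeeping the powers of $z$ (equivalently of $P$) correctly through the chain of substitutions $t_3 \to t_2 \to t_1 \to t_0$: each mutation is only defined up to a Laurent monomial and the passage from $F$'s to $\hF$'s introduces monomial normalizations, so one must be careful that "$\hR$ is a Laurent polynomial in $\L(t_3)$" (Lemma \ref{lem:hcrit}) together with "$\hR(\hQ/y)$ is not divisible by $F_z = P$" (from \eqref{hatF} and Lemma \ref{L:subs}) really does pin down the $z$-exponent so that, after $z = \hP/x$, the expression lands in $\L$ with no pole and $P$-adic valuation exactly zero on $v$. I expect that once the statement "$\gcd(z,u) = 1$" from Lemma \ref{lem:u} is fed in at the right spot, both bullets fall out, and the proof will be short — essentially the same three-line argument as Lemma \ref{lem:u} with $t_1$ replaced by $t_2$ and one extra appeal to Lemma \ref{lem:u} itself.
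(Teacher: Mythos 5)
Your overall plan --- substitute $u=\hQ(z)/y$ into $\hR$, then substitute $z=\hP(y)/x$, and control the resulting powers of $P$ --- points in the right direction, but it is missing the one ingredient that actually makes the divisibility by $P$ work, and that ingredient is not \eqref{hatF}, Lemma \ref{lem:hcrit}, or $\gcd(z,u)=1$. The defining property \eqref{hatF} of $\hR$ (in the seed $t_2$, with respect to the variable $u$) controls divisibility by $F_u=Q(z)$ only; it tells you that $\hR|_{u\leftarrow \hQ(z)/y}$ is a Laurent polynomial in $z,y,\dots$ and is not divisible by $Q$. It says nothing about divisibility by $P(y)$. But to divide by $z\propto P(y)/(\text{monomial})$ and land in $\L$, you need precisely that $P(y)$ divides the part of $\hR(u)$ that is constant (and lower) in $z$. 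The paper gets this from the fact that $R$ \emph{mutates back to} $P$: by the definition of exchange-polynomial mutation, $R\bigl(\hQ(0)/y\bigr)=P(y)\cdot M\cdot A$ where $M$ is a Laurent monomial and $A$ consists of factors of $\hQ(0)$ (equation \eqref{E:PMA}). Writing $R(u)=\bigl[R(\hQ(z)/y)-R(\hQ(0)/y)\bigr]+R(\hQ(0)/y)$, the first bracket is a polynomial in $z$ with no constant term (hence divisible by $z$), and the second is divisible by $P(y)$ by \eqref{E:PMA}; together these give $v\in\L$. Your step (ii), ``use that $P$ is irreducible and does not divide the relevant coefficients, to conclude \dots no actual pole,'' asserts the conclusion rather than proving it --- and in fact the mechanism is the opposite of what you describe: you need $P$ \emph{to divide} the relevant term, not to fail to divide it. Your appeal to Lemma \ref{lem:hcrit} is also misapplied: $\hR=\hF_z$ contains no power of $z$ at all (it is a Laurent monomial in the \emph{other} variables times $R$), and the relevant instance of that lemma concerns the exponent of $u$ and the ring $\L(t_1)$, not $\L(t_3)$.

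Two further gaps. First, the statement $\gcd(z,v)=1$ also hinges on \eqref{E:PMA}: the paper shows $R(u)/z\equiv B+Cx \pmod z$ with $B,C$ independent of $x$ and $\gcd(C,z)=1$ (using that $A\mid \hQ(0)$ does not involve $y$ while $z\propto P(y)$ is irreducible), whence $z\nmid B+Cx$. Your sketch of this part trails off without identifying what forces $P\nmid v$. Second, the argument genuinely requires a case analysis that your ``same three-line argument as Lemma \ref{lem:u}'' prediction ignores: the cases where $R$ does not depend on $u$ (which needs Lemma \ref{L:allequal} and the subcase $Q\propto R$), and where $\hQ(z)$ does not depend on $z$ (where one instead shows $\hR(u)/P(y)$ is a unit in $\L$ directly from the mutation rule). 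The asymmetry with Lemma \ref{lem:u} is real: there, the denominator $x$ is a unit of $\L$, while here the denominator $z$ is not, and that is exactly why the reverse-mutation identity \eqref{E:PMA} cannot be avoided.
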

\begin{proof}
We have $v = \hR(u)/z$.  
Since $xz = \hP(y)$, we have that $z/P(y)$ is a unit in $\L$, and thus $z$ is irreducible in $\L$ by (LP1).

Case 1: Suppose that $R(u)$ does not depend on $u$.  Then by Lemma \ref{L:variablesinvolved}, by the definition of mutation of exchange polynomials, $R \propto P$ and $P(y)$ does not depend on $y$.  Now $\hR = R \cdot M(u)$, 
where $M(u)$ is a Laurent monomial depending on $u$ (and other cluster variables in $t_2$) and not on $z$.  The power of $u$ that appears in $M(u)$ is equal to $-1$ if $Q(z)$ divides $R$, and equal to $0$ otherwise (we use (LP1) that $R$ and $Q(z)$ are irreducible).  Since $R$ does not depend on $z$, the former occurs if and only if $Q \propto R$, using (LP3).  By Lemma \ref{L:allequal}, $v \propto y$ is a unit in $\L$, so both of the claims follow.  

Thus we may assume that $\hR = R \cdot M$ where $M$ is a Laurent monomial not involving $u$.  Similarly, we may assume that $\hP = P \cdot M'$ where $M'$ is a Laurent monomial not involving $y$.  We calculate
$$
v = \frac{\hR}{z} \propto \frac{P \cdot M}{(P \cdot M')/x}
$$
giving that $v$ is a unit in $\L$, and again both of the claims follow.

\smallskip
Case 2: Suppose that $R(u)$ depends on $u$.  Then by Lemma \ref{L:depend}, $\hQ(z) = Q(z) \cdot M$ for a monomial $M$ not depending on $z$.  
Suppose that $\hR(u) = R(u) \cdot u^{-p} \cdot M'$, for a monomial $M'$ not depending on $u$, and $p \geq 0$.  

Case 2a: Suppose that $\hQ(z)$ depends on $z$.  Then $p = 0$ and $\hR/R$ is a unit in $\L$.  We have
$$
\frac{R(u)}{z} = \frac{R\left(\frac{\hQ(z)}{y}\right)}{z}= \frac{R\left(\frac{\hQ(z)}{y}\right) - R\left(\frac{\hQ(0)}{y}\right)}{z} + \frac{R\left(\frac{\hQ(0)}{y}\right)}{z} 
$$
Since $R(u)$ mutates to $P(y)$, we know that
\begin{equation}\label{E:PMA}
\frac{R\left(\frac{\hQ(0)}{y}\right)}{z} = \frac{P(y) \cdot M'' \cdot A}{z}\end{equation}
where $A$ is the product of some factors of $\hQ(0)$ which can be chosen to be polynomial, and $M'' = M''(y)$ is a Laurent monomial in $y$, and the other variables (that is, $M''$ does not involve $z$ or $u$).  Note that $\hQ(0) \in \L$ and thus $A \in \L$.  As $z = \hP(y)/x$ and $\hP(y)/P(y)$ is a unit in $\L$, it follows that $\frac{R(\frac{Q(0)}{y})}{z} \in \L$.  Also, $f(z) = R\left(\frac{\hQ(z)}{y}\right) - R\left(\frac{\hQ(0)}{y}\right)$ is a polynomial in $z$ with constant term removed.  It follows that $\frac{1}{z}\left(R\left(\frac{\hQ(z)}{y}\right) - R\left(\frac{\hQ(0)}{y}\right)\right)$ is a polynomial in $z$, and thus lies in $\L$.  Thus $\frac{R(u)}{z} \in \L$, and since $\hR/R \in \L$, we have $v = \hR/z \in \L$.

Now, $\hQ(0)$ does not involve $y$, so the quantity $A$ in \eqref{E:PMA} does not involve $y$.  Since we have assumed that $R(u)$ depends on $u$, by Lemma \ref{L:variablesinvolved}, $P(y)$ depends on $y$ as well, so we see that $\frac{R\left(\frac{\hQ(0)}{y}\right)}{z}$ is not divisible by $z$ in $\L$.  Since $z$ is irreducible in $\L$, it follows that $\frac{R\left(\frac{\hQ(0)}{y}\right)}{z} = Cx$, where $C \in \L$ does not depend on $x$, and $\gcd(C,z) = 1$.

However, $f(z)$ is a polynomial in $z$ whose coefficients do not depend on $x$.  Thus we have
$$
\frac{R(u)}{z} \equiv B + Cx \mod z
$$
for $B,C \in \L$ satisfying $\gcd(C,z) = 1$, and $B,C$ do not depend on $x$.  It follows that $\gcd(z,v) = \gcd(z,B+Cx)= 1$.

\medskip
Case 2b: Suppose $\hQ(z)$ does not depend on $z$, so $\hQ(z) = \hQ(0)$.  Then
$$
\hR(u) = \left(\frac{\hQ(0)}{y}\right)^{-p}  \cdot R\left(\frac{\hQ(0)}{y}\right)  \cdot M'
$$
where $M'$ involves only the other cluster variables, and by \eqref{hatF} and Lemma \ref{L:subs}, $p$ is chosen so that $\hR \in \L$ and is not divisible by $\hQ(0)$.  But by the definition of how to obtain $P(y)$ from $R(u)$ by mutation, we have 
$$
R\left(\frac{\hQ(0)}{y}\right) = P(y) \cdot M'' \cdot A
$$
where $A$ is a product of some factors of $\hQ(0)$, and $M''$ is a unit in $\L$.  Since $\hQ(0) = \hQ(z)$ is irreducible by (LP1), we see that $\hR(u)/P(y)$ is a unit in $\L$.  It follows that $v =\hR(u)/z = \hR(u)x/\hP(y) \in \L$ is a unit in $\L$, and $\gcd(z,v) = 1$.
%
%
%
%
%
%
%
%
%
%
\end{proof}

\begin{proof}[Proof of Theorem \ref{T:Laurent}]
 Denote by $t_0 = t$ our original cluster the Laurent polynomial ring $\L(t)$ we are considering.  Let $t_{\head}$ be the cluster containing the cluster variable $w$ we are trying to prove lies in $\L(t)$. 
 Find the mutation path from $t_0$ to $t_{\head}$ which we shall refer to as the {\it {spine}}. Here we consider the exchange graph to be 
a (infinite) regular tree of degree $n$, ignoring possible monodromies. Thus every cluster 
$t_{\head}$ can be assumed to have a unique path from $t_0$. The argument is by induction on the length the spine.  If it has length one, the statement is obvious, and if it has length two it is addressed in Lemma \ref{lem:u}.

Assume now the length of the spine is at least three. Assume that the first two steps from $t_0$ to $t_{\head}$ are 
$$
\overset{x,y}{\bullet} \overset{\hP}{\text{---------}}\overset{z,y}{\bullet} \overset{\hQ}{\text{---------}} \overset{z,u}{\bullet}.
$$
Consider a third mutation, which mutates the same variable as the first step did, obtaining the familiar diagram 
$$
\overset{x,y}{\bullet} \overset{\hP}{\text{---------}}\overset{z,y}{\bullet} \overset{\hQ}{\text{---------}} \overset{z,u}{\bullet} \overset{\hR}{\text{---------}} \overset{v,u}{\bullet}
$$
where the clusters from left to right are $t_0$, $t_1$, $t_2$ and $t_3$. Note that $t_3$ {\it might not} lie on the spine, but it is closer to $t_{\head}$ than $t_0$, and so is $t_1$. By the induction assumption we have $w \in t_{\head}$ lies in $\L(t_1)$ and in $\L(t_3)$. Thus we have two expressions $w = f/z^a$ and $w = g/u^b v^c$, where $f$ and $g$ lie in $\L = \L(t_0)$. By Lemmas \ref{lem:u} and \ref{lem:v} we know $z$ is relatively prime with both $u$ and $v$, which implies $w \in \L$.
\end{proof}

\section{Rank two} \label{sec:r2}
In this section we classify rank two Laurent phenomenon algebras, and give an explicit description of normalized LP algebras of rank two with finitely many seeds.  Let $\A$ be a LP algebra of rank 2, with seeds $\ldots, t_{-1},t_0,t_1,\ldots$ and cluster variables $\ldots,x_{-1},x_0,x_1,x_2,\ldots$ so that $t_i$ contains the cluster variables $\{x_i,x_{i+1}\}$ as in the following:
\begin{equation}\label{E:rank2}
{\text{---------}}\overset{x_0,x_1}{\bullet} {\text{---------}}\overset{x_1,x_2}{\bullet} {\text{---------}} \overset{x_2,x_3}{\bullet} {\text{---------}} \overset{x_3,x_4}{\bullet}
{\text{---------}}
\end{equation}

Note that the seeds and variables may be repeated.  

\begin{example}\label{ex:FZ}
Let $S$ be a coefficient ring and $\FF = \Frac(S[x_1,x_2])$.  Let $q_1,q_2,r_1,r_2 \in S$ and $b,c \in \Z_{\geq 1}$ be such that $r_1 + q_1x^c$ and $r_2+q_2x^c$ are irreducible in $S[x]$.  Let $\A_{b,c}$ be the rank two LP algebra with initial seed $t_1 = \{\{x_1,x_2\},\{r_2+q_2x_2^c,r_1+q_1x_1^b\}\}$.  While we cannot apply Proposition \ref{P:cluster} (unless $r_1,r_2,q_1,q_2$ are variables), nevertheless the cluster algebra $\A_{CA}(t_1)$ with initial seed $t_1$ can naturally be identified with $\A_{b,c}$.  The seeds of $\A_{b,c}$ are of the form
$t_i =\{\{x_i,x_{i+1}\},\{r_{i+1}+q_{i+1}x_{i+1}^{b_{i+1}},r_i+q_ix_{i}^{b_{i}}\}\}$
where $b_i = c$ if $i$ is even, and $b_i = b$ if $i$ is odd, and $r_i,q_i \in S$ satisfy recursions given in \cite[Example 2.5]{CA1}.  
\end{example}

By Theorem \ref{T:Laurent}, we may write
$$
x_m = \frac{S(x_1,x_2)}{x_1^{d_1(m)}x_2^{d_2(m)}}
$$
for a polynomial $S(x_1,x_2) \in S[x_1,x_2]$ not divisible by either $x_1$ or $x_2$.  Following terminology of cluster algebras \cite{CA1} we call the vector 
$$
\delta(m) = d_1(m) \alpha_1 + d_2(m) \alpha_2
$$
the {\it denominator vector} of $x_m$.  Here $\alpha_1$ and $\alpha_2$ are a basis of a two-dimensional lattice $Q \simeq \Z^2$.  Given a rank two Cartan matrix
$$
\left( \begin{array}{cc} 2 & -b \\ -c &2 \end{array} \right)
$$
we have a set $\Phi^+$ of positive real roots, and a set $\Phi^+ \cup \{-\alpha_1,-\alpha_2\}$ of almost positive real roots.  We refer the reader to \cite[Section 6]{CA1} for full details.

\begin{prop}\label{P:den}
Suppose the exchange polynomials of $t_1$ are $F_1 = P(x_2)$ and $F_2 = Q(x_1)$ with degrees $c \geq 1$ and $b \geq 1$ respectively.  Then the set of denominator vectors of $\A$ is exactly $\Phi^+ \cup \{-\alpha_1,-\alpha_2\}$.
\end{prop}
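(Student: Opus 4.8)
\textbf{Proof proposal for Proposition \ref{P:den}.}
The plan is to mimic the rank-two analysis in \cite[Section 6]{CA1}, tracking denominator vectors through mutations of the bi-infinite rank-two chain \eqref{E:rank2}. First I would set up the recursion: since $t_1$ has exchange polynomials $F_1 = P(x_2)$ of degree $c$ and $F_2 = Q(x_1)$ of degree $b$, Example \ref{ex:FZ} (with the coefficients $r_i, q_i \in S$) gives us that each $t_i$ has exchange polynomials $r_{i+1} + q_{i+1} x_{i+1}^{b_{i+1}}$ and $r_i + q_i x_i^{b_i}$, with $b_i$ alternating between $b$ and $c$ and each $r_i + q_i x^{b_i}$ irreducible (hence the $\hat F$'s equal the $F$'s, as the exchange polynomials are not proportional and involve genuine variables, so no monomial correction occurs). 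The mutation relation is therefore $x_{i-1} x_{i+1} = r_i + q_i x_i^{b_i}$, the classical rank-two exchange relation. Taking denominator vectors and using Theorem \ref{T:Laurent} together with the fact that $x_i, x_{i+1}$ are algebraically independent, I get the numerical recursion $\delta(i+1) = \max(b_i \delta(i) - \delta(i-1), -\delta(i-1))$ componentwise — or more precisely $\delta(i-1) + \delta(i+1) = (b_i \delta(i))_+$ in the appropriate sense — which is exactly the recursion analyzed for cluster algebras.

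Next I would run this recursion forward and backward from the seed values $\delta(1) = -\alpha_1$, $\delta(2) = -\alpha_2$ (reading off that $x_1 = x_1/1$ has denominator vector $-\alpha_1$ under the normalization where $\delta(m) = d_1(m)\alpha_1 + d_2(m)\alpha_2$; I will need to be careful with the sign convention so that frozen/initial variables get $-\alpha_i$, matching the almost-positive-roots indexing of \cite{CA1}). The combinatorial core is then identical to the finite- and infinite-type dichotomy in \cite[Section 6]{CA1}: if $bc \le 3$ the recursion is periodic and the denominator vectors cycle through the finite set $\Phi^+ \cup \{-\alpha_1, -\alpha_2\}$ associated to the Cartan matrix $\left(\begin{array}{cc} 2 & -b \\ -c & 2 \end{array}\right)$; if $bc \ge 4$ the vectors $\delta(m)$ for $m \to \pm\infty$ march off to infinity along the two limiting rays, and together with the two negative simple roots they enumerate exactly the almost positive roots of the corresponding (affine or indefinite) root system. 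The key input that makes the LP computation reduce cleanly to the cluster computation is that, because each exchange polynomial is a genuine binomial-type polynomial $r_i + q_i x_i^{b_i}$ with $r_i, q_i$ invertible-or-not in $S$ but in any case the $x_i$-degree is exactly $b_i$, the denominator exponents behave exactly as in the binomial (cluster) case — the coefficients $r_i, q_i$ do not affect denominator vectors.

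The main obstacle I anticipate is not the root-system bookkeeping (which is lifted verbatim from \cite{CA1}) but rather justifying that the denominator recursion holds with the stated max-form, i.e.\ that no unexpected cancellation occurs in the numerator $S(x_1, x_2)$ when we substitute the Laurent expressions for $x_i$ into $r_i + q_i x_i^{b_i}$ and clear denominators. Concretely, I must show that after writing $x_{i+1} = (r_i + q_i x_i^{b_i})/x_{i-1}$ and expressing everything over $\L(t_1)$, the resulting rational function in $x_1, x_2$ is already in lowest terms — no factor of $x_1$ or $x_2$ divides the numerator. This is where irreducibility of the exchange polynomials (LP1) and the fact that $r_i + q_i x_i^{b_i}$ is not divisible by $x_i$ get used: an inductive argument shows $\gcd$ conditions between consecutive numerators (analogous to Lemmas \ref{lem:u} and \ref{lem:v}) force the denominator exponents to be exactly the predicted ones rather than something smaller. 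Once that coprimality is nailed down for all $i$, the denominator vectors satisfy the piecewise-linear recursion on the nose, and the identification of their range with $\Phi^+ \cup \{-\alpha_1, -\alpha_2\}$ follows from \cite[Section 6]{CA1} with no further work.
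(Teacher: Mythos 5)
Your overall strategy --- reduce the denominator-vector computation to the Fomin--Zelevinsky rank-two analysis in \cite[Section 6]{CA1} --- is the same as the paper's, but there is a genuine gap at the very first step. You write that Example \ref{ex:FZ} ``gives us that each $t_i$ has exchange polynomials $r_{i+1}+q_{i+1}x_{i+1}^{b_{i+1}}$ and $r_i+q_ix_i^{b_i}$,'' and your later argument leans on the exchange polynomials being ``genuine binomial-type.'' But the hypothesis of Proposition \ref{P:den} is only that $F_1 = P(x_2)$ and $F_2 = Q(x_1)$ are irreducible of degrees $c$ and $b$; they need not have two terms. For instance the Chekhov--Shapiro seed $\{(x,A+By+Cy^2),(y,Q+Px)\}$ satisfies the hypothesis with $(b,c)=(1,2)$ but is not of the form in Example \ref{ex:FZ}, and its mutations produce further non-binomial exchange polynomials. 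So you have silently replaced the general statement by the special case where the conclusion is already known, and the content of the proposition is exactly the passage from the general case to that special case.

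What is missing is the paper's key observation: under LP mutation at $x_2$ the new exchange polynomial is
$F_1'(x_0) = x_0^c\,F_1\!\left(F_2(0)/x_0\right)/T$ with $T\in S$, so $F_1'$ again has degree exactly $c$ in $x_0$ (and nonzero constant term), and by induction every seed of $\A$ has exchange polynomials of full degrees $b$ and $c$ depending on the other cluster variable, whence $\hF = F$ throughout. From this one concludes that the denominator vectors $\delta(m)$ depend only on $(b,c)$ and not on the actual polynomials $P$ and $Q$, and only \emph{then} may one specialize to Example \ref{ex:FZ} and quote \cite{CA1}. Your closing paragraph about coprimality and the absence of cancellation in the numerators is a legitimate concern (the paper compresses it into ``it follows easily''), but as written your proof never establishes the degree-preservation step that makes the whole reduction legal, and your ``coefficients $r_i,q_i$ do not affect denominator vectors'' is a statement about the wrong family of seeds. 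Also, a minor point: $\hF=F$ holds here not because the exchange polynomials are non-proportional, but because each $F_i$ depends on the other cluster variable while having a nonzero constant term in $S$, so the substituted expression cannot be divisible by the other exchange polynomial.
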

\begin{proof}
We first observe that the condition that $F_1$ and $F_2 $ depend on $x_2$ and $x_1$ implies that all the exchange polynomials of $\A$ depend on the other variable of that cluster, and in particular that $\hF = F$ for all the exchange polynomials of $\A$.  

When $x_2$ is mutated, we have 
$$F_1'(x_0) = \frac{x_0^c F_1(F_2(0)/x_0)}{T}
$$
for $T \in S$ not depending on $x_1$ or $x_2$.  In particular $F_1'$ also has degree $c$ in $x_0$.  It follows easily from this that the denominator vectors $\delta(m)$ depend only on $b$ and $c$, and not on $P(x_2)$ and $Q(x_1)$.  So to compute $\delta(m)$ we may assume we are in the situation of Example \ref{ex:FZ} where the result is established in \cite{CA1}.
\end{proof}
Just as in \cite[Theorem 6.1]{CA1}, each $\delta(m)$ can be computed explicitly, but we shall not need this in the following. Proposition \ref{P:den} does not consider the case where $P(x_2)$ does not depend on $x_2$, or $Q(x_1)$ does not depend on $x_1$.  Instead we have

\begin{prop}\label{P:Q0}
Suppose the exchange polynomials of $t_1$ are $F_1 = P(x_2)$ and $F_2 = Q(x_1)$ and suppose that $Q(x_1)$ does not depend on $x_1$, and furthermore that $P \not \propto Q$.  Then $x_0 \propto x_4$.
%
%
\end{prop}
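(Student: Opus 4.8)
The plan is to track the mutation data through the chain $t_1 \to t_2 \to t_3$ (mutating at $x_1$, then $x_2$, then $x_3$) and show that all of the monomial and polynomial bookkeeping conspires to give $x_0 \propto x_4$. The hypothesis $Q(x_1)$ does not depend on $x_1$ means that $\hF_2 = Q \cdot N$ for a monomial $N$ not involving $x_1$ (but possibly involving $x_2$ up to an appropriate power dictated by \eqref{hatF}), and since $P \not\propto Q$ the polynomials $P$ and $Q$ are coprime irreducibles in $\L(t_1)$. First I would compute $x_0$: mutating $t_1$ at $x_1$ gives $x_0 = \hF_1/x_1$, where $\hF_1$ is the exchange Laurent polynomial attached to $x_1$ in $t_1$ with exchange polynomial $P(x_2)$; thus $x_0 = P(x_2)\cdot N'/x_1$ for some monomial $N'$. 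Similarly I would compute $x_3$ from mutating $t_2$ at $x_2$ and $x_4$ from mutating $t_3$ at $x_3$, keeping careful track of what the exchange polynomial of $x_2$ becomes in $t_2$ and of $x_3$ in $t_3$.

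The key point is to use the mutation rule \eqref{E:G} together with the fact that $Q$ is a constant in $x_1$. When we mutate at $x_1$, the exchange polynomial $F_2 = Q$ of the variable $x_2$ does not depend on $x_1$, so by the definition of mutation $F_2' \propto Q$ stays essentially unchanged in $t_2$ — it remains a polynomial not involving either $x_1$, $x_2$, or $x_3$. Hence when we next mutate at $x_2$ to produce $x_3$, we have $x_3 = \hF_2'/x_2$ where $\hF_2' \propto Q \cdot (\text{monomial})$, so $x_3$ is, up to a monomial in the surviving variables, just $Q/x_2$; in particular $x_3$ does not depend on $x_1$ or $x_0$. I would then feed this into the computation of $x_4$: mutating $t_3$ at $x_3$, the new variable is $x_4 = \hF_3/x_3$, and $F_3$ (the exchange polynomial of $x_3$ in $t_3$) is obtained from $F_3 = P$-type data via \eqref{E:G} with the substitution $x_2 \leftarrow \hF_2'|_{\cdots}/x_3$. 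Since $\hF_2'$ is, up to units and monomials, just $Q$ (a constant), this substitution is by a pure monomial in $x_3$, the common-factor removal step removes nothing new, and the resulting $\hF_3$ is proportional to $P$ times a monomial in $x_1$ (and the other surviving variables).

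Putting the pieces together, $x_4 = \hF_3/x_3 \propto \big(P(x_2)\cdot(\text{monomial in }x_1)\big)\big/\big(Q/x_2 \cdot(\text{monomial})\big)$, and I would check that all the monomial factors match those appearing in $x_0 = P(x_2)\cdot N'/x_1$, using — as in the proof of Lemma \ref{L:allequal} — that the cluster variables occurring in these monomials are exactly those whose exchange polynomials are proportional to $P$ or to $Q$, so the exponents are forced. This forces $x_0$ and $x_4$ to differ only by a unit in $S$, which is the claim. The main obstacle I anticipate is the monomial bookkeeping: precisely pinning down which powers of $x_1$, $x_2$, $x_3$ appear in each $\hF$ via \eqref{hatF} and Lemma \ref{lem:hcrit}, and verifying that the contributions cancel exactly rather than up to a spurious monomial. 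The hypothesis $P \not\propto Q$ is what guarantees the denominators don't collapse further (and rules out the degenerate situation of Lemma \ref{L:allequal} with all three polynomials proportional), so I would isolate that coprimality input carefully and use it both to keep $Q$ irreducible through the mutations and to control the common-factor-removal step.
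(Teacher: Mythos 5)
Your high-level plan (walk around the $4$-cycle keeping track of exchange data) is the same as the paper's, but the execution contains errors that break the argument rather than just leaving bookkeeping to be filled in.

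First, the indexing and the identification of $x_0$. With the convention $t_i=\{x_i,x_{i+1}\}$, the variable $x_0$ is produced from $t_1$ by mutating at $x_2$, not at $x_1$; hence $x_0=\hF_2/x_2$, and since $F_2=Q\in S$, one gets $\hF_2=Q$ exactly (the monomial correction is $x_1^{a_1}$ with $a_1\le 0$; a negative $a_1$ would put a power of $P$ into the denominator of $\hF_2|_{x_1\leftarrow P/x}$, and $a_1=0$ is admissible precisely because $P\not\propto Q$ forces $P\nmid Q$). So $x_0=Q/x_2$. Your formula $x_0=P(x_2)N'/x_1$ is in fact $x_3$, and your description of $\hF_2$ as $Q$ times a monomial ``not involving $x_1$ but possibly involving $x_2$'' is backwards: by definition $\hF_2=x_1^{a_1}F_2$ and can never involve $x_2$. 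Because the hypothesis is not symmetric in $P$ and $Q$, this is not a harmless relabelling: your final step compares $\hF_3/x_3\propto P\cdot(\text{monomial})/(Q/x_2)$ with $P\cdot N'/x_1$, and these differ by a factor of $Q$, a non-unit of $S$, so they cannot be proportional. (The expression $\hF_3/x_3$ you write down also pairs the $P$-type exchange Laurent polynomial with the $Q$-type variable, which is not an exchange relation of any seed in the chain.)

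Second, and more seriously, the claim that ``the common-factor removal step removes nothing new'' when $P$ is mutated is false in general. Writing $P=\sum_{i=0}^k p_i x_2^i$, the substitution $x_2\leftarrow Q/x_4$ followed by clearing denominators gives $\sum_i p_iQ^ix_4^{k-i}$, whose coefficients have common factor $Q^d$ where $d$ is the largest power of $Q$ dividing all the $p_iQ^i$; this $d$ is positive whenever $Q$ divides $P(0)$, and it is the same $d$ appearing in $\hP=P/x_2^d$. The entire content of the paper's proof is that the three occurrences of this exponent --- in $\hP$, in the factor $Q^d$ stripped from the mutated polynomial $R=x_4^kP(Q/x_4)/Q^d$, and in $\hR=R/x_4^{k-d}$ (via Lemma \ref{lem:hcrit}) --- cancel exactly to give $\hR=\hP$. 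By declaring the removal trivial you eliminate the one computation that makes everything match. For the proposition as literally stated there is in fact a shorter correct route you could salvage: $F_{x_2}$ is $\propto Q$ in both $t_1$ and $t_2$ (it never involves the mutated variable), and in both seeds its exchange Laurent polynomial is $Q$ itself by the $P\not\propto Q$ argument above, whence $x_0=Q/x_2=x_4$ directly; the $R$, $\hR$ computation is what is needed later, in Theorem \ref{thm:rank2}, to show the exchange graph actually closes into a square.
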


\begin{proof}
Let $d$ be such that $\hP = P/x_1^d$, and let $k$ be the degree of $P$ as a polynomial in $y$. Clearly $k \geq d$. Then 
$$R \propto \frac{P(\frac{Q}{x_3}) x_3^{k}}{Q^d}$$
where $R=R(x_3)$ is the exchange polynomial for $x_2$ in $t_2 = \{x_2,x_3\}$.
In terms of the $t_1 = \{x_1,x_2\}$ cluster  we have 
$$R \propto \frac{P(x_1)(\frac{Q}{x_1})^{k-d}}{x_1^d},$$ and clearly $k-d$ is the largest power of $\frac{Q}{x_1}$ you can divide it by such that the result is a Laurent polynomial. Thus by Lemma \ref{lem:hcrit} $\hR=R/x_3^{k-d}$.
But then
$$\hR \propto \frac{P(x_1)(\frac{Q}{x_1})^{k-d}}{x_1^d (\frac{Q}{x_1})^{k-d}} = \frac{P(x_1)}{x_1^d} = \hP,$$ and therefore $x_0$ and $x_4$ are differ by a unit in $S$.
\end{proof}

\begin{theorem}\label{thm:rank2}
Suppose the exchange polynomials of $t_1$ are $F_1 = P(x_2)$ and $F_2 = Q(x_1)$ with degrees $c \geq 0$ and $b \geq 0$ respectively, and assume that $c \geq b$.  Then $\A$ is of finite type if and only if either $b = 0$, or $(b,c)$ is equal to one of $(1,1)$, $(1,2)$ and $(1,3)$.
\end{theorem}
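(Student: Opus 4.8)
The plan is to split into the two non-finite and the four candidate finite cases, and handle them with the tools already developed. First I would dispose of the case $b = 0$: here $Q(x_1)$ does not depend on $x_1$, and either $P \propto Q$ or not. If $P \not\propto Q$, Proposition \ref{P:Q0} gives $x_0 \propto x_4$, so the sequence of seeds is periodic with period $4$ (up to units), hence $\A$ has finitely many equivalence classes of seeds and is of finite type. If $P \propto Q$, then both exchange polynomials lie in $S$ and Lemma \ref{L:allequal} (with its notation $z = P/(xyM)$, $u \propto x$, $v \propto y$) shows the seeds are again eventually periodic, so finite type holds. This takes care of the ``if'' direction when $b = 0$.

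Next, for $b \geq 1$ I would invoke Proposition \ref{P:den}: since $F_1 = P(x_2)$ has degree $c \geq 1$ in $x_2$ and $F_2 = Q(x_1)$ has degree $b \geq 1$ in $x_1$, all exchange polynomials of $\A$ depend on the other cluster variable, $\hF = F$ throughout, and the set of denominator vectors of the cluster variables of $\A$ is exactly $\Phi^+ \cup \{-\alpha_1,-\alpha_2\}$ for the Cartan matrix $\left(\begin{smallmatrix} 2 & -b \\ -c & 2\end{smallmatrix}\right)$. Distinct cluster variables have distinct denominator vectors (the denominator vector determines the power of $x_1,x_2$ in the denominator, and one checks as in \cite{CA1} that the numerators are then forced, or more simply that two cluster variables with the same denominator vector in a rank two chain must coincide because the exchange relations are invertible). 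Therefore the number of cluster variables of $\A$ is finite if and only if $\Phi^+$ is finite, i.e. if and only if the Cartan matrix is of finite type: $bc \leq 3$. Combined with $c \geq b \geq 1$ this forces $(b,c) \in \{(1,1),(1,2),(1,3)\}$. Since the seeds of a rank two LP algebra are determined by consecutive pairs of cluster variables together with their (determined) exchange polynomials, finitely many cluster variables is equivalent to finitely many seeds, giving both directions at once in the range $b \geq 1$.

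The one genuinely delicate point, which I expect to be the main obstacle, is the claim that a cluster variable is determined by its denominator vector, so that ``finitely many denominator vectors'' really does imply ``finitely many cluster variables.'' In the cluster algebra setting this is part of the standard structure theory, but in the LP setting the numerators $S(x_1,x_2)$ could in principle vary while the denominator vector stays fixed. I would handle this by exploiting the rank two structure directly: if $x_m$ and $x_{m'}$ had the same denominator vector with, say, $m < m'$ both positive, then tracing the exchange relations $x_{i-1}x_{i+1} = F_i(x_i)$ backwards (each step is a division, hence determined), the equality of denominator vectors propagates to force $x_{m-1} \propto x_{m'-1}$ and then inductively $x_0 \propto x_{m'-m}$, etc., yielding periodicity and hence finitely many seeds; if instead there were infinitely many distinct denominator vectors we are done. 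This reduces everything to the already-established Proposition \ref{P:den} plus the elementary observation that denominator vectors strictly increase (in the appropriate sense) as one moves away from $t_1$ until periodicity sets in. Finally I would double-check the boundary cases $(1,1),(1,2),(1,3)$ against Example \ref{ex:FZ}, where $\A_{b,c}$ is identified with the corresponding finite-type cluster algebra $A_2$, $B_2$, $G_2$ respectively, confirming finiteness, and note that $bc \geq 4$ with $b \geq 1$ gives infinite $\Phi^+$ hence infinitely many cluster variables and infinite type.
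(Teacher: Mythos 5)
Your treatment of the case $b=0$ and of the ``only if'' direction for $b\geq 1$ matches the paper: Proposition \ref{P:Q0} and Lemma \ref{L:allequal} give the square and the triangle, and Proposition \ref{P:den} shows that when $bc\geq 4$ there are infinitely many denominator vectors, hence infinitely many cluster variables and infinitely many seeds. The genuine gap is in the ``if'' direction for $(b,c)\in\{(1,1),(1,2),(1,3)\}$. Finiteness of $\Phi^+\cup\{-\alpha_1,-\alpha_2\}$ does not by itself bound the number of cluster variables or seeds: the denominator vector records only the monomial $x_1^{d_1(m)}x_2^{d_2(m)}$, and the numerators $S(x_1,x_2)$ --- and, more seriously, the coefficients of the exchange polynomials, which are part of the seed --- could in principle keep changing while the denominator vectors cycle periodically. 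Your proposed repair does not get off the ground: from $\delta(m)=\delta(m')$ you cannot conclude $x_m\propto x_{m'}$ (two Laurent polynomials with the same monomial denominator can have different numerators), and even granting proportionality of a consecutive pair of cluster variables you would still need the exchange polynomials of the two seeds to agree before the backwards recursion $x_{m-1}=\hF(x_m)/x_{m+1}$ reproduces itself; that agreement is precisely what has to be proved.

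The appeal to Example \ref{ex:FZ} does not close the gap either: that example only covers exchange polynomials of the special binomial shape $r+qx^c$, whereas the theorem allows arbitrary irreducible $P$ of degree $c$ (for instance $Ay^2+BGy+CFG^2$ with $B\neq 0$ when $(b,c)=(1,2)$), which is not a cluster algebra seed, so no identification with the $A_2$, $B_2$, $G_2$ cluster algebras is available. The paper's proof of the ``if'' direction is necessarily constructive: it puts the initial seed into a normal form whose coefficients satisfy explicit coprimality conditions (obtained by extracting suitable gcd's), checks that mutation preserves this normal form while merely permuting the coefficients, and then verifies by a direct, computer-assisted rational-function computation that the cluster variables wrap around after $5$, $6$, or $8$ mutations, yielding the pentagon, hexagon, and octagon of Figure \ref{fig:finite}. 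Some explicit verification of this kind is unavoidable, and it is exactly the part your proposal omits.
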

\begin{proof}
For simplicity let us denote the initial seed by $t_1 = \{(x,P(y)),(y,Q(x))\}$, so $x_1 = x$ and $x_2 = y$ have exchange polynomials $P(y)$ and $Q(x)$ respectively.

Suppose $b = 0$.  If $P \propto Q$ then $\A$ has a normalization $\A'$ consisting of the following three seeds (see Lemma \ref{L:allequal})
\begin{equation}
\label{E:triangle}
\{(x,P),(y,P)\}, \;\; \{(x,P),(z,P)\}, \;\; \{(z,P),(y,P)\} \;\; \text{where} \;\; z=P/xy.
\end{equation}

If $P \not \propto Q$ then by Proposition \ref{P:Q0} (and with $k,d$ as in Proposition \ref{P:Q0}) our LP algebra has normalization $\A'$ whose four seeds are 
\begin{equation}\label{E:square}
\{(x,P),(y,Q)\}, \;\; \{(z,P),(y,Q)\}, \;\; \{(z,R),(u,Q)\} \;\; \{(x,R),(u,Q)\}
\end{equation}
where $$\hP = P/y^d, \;\; \hQ = Q, \;\; \hR = R/u^{k-d} = \hP.$$

\begin{figure}
\makebox[\textwidth]{
\begin{tabular}{|c|c|}
\hline
\multirow{4}{*}{$b=0$  triangle} & $
\{(x,P),(y,P)\}$\\
&$ \{(x,P),(z,P)\}$\\ 
&$\{(z,P),(y,P)\}$
\\
\cline{2-2}
&$z = P/xy$
\\
\hline
\hline
\multirow{5}{*}{$b=0$  square} & $
\{(x,P),(y,Q)\}$ \\ &$\{(z,P),(y,Q)\}$\\ &$\{(z,R),(u,Q)\}$ \\ &$\{(x,R),(u,Q)\}$ \\
\cline{2-2}
&$\hP = P/y^d, \;\; \hQ = Q, \;\; \hR = R/u^{k-d} = \hP.$
\\
\hline
\hline
\multirow{7}{*}{$(b,c)=(1,1)$ pentagon}
&$\{(x, Ay+BE),(y, Cx+DE)\}$\\
&$\{(z, Ay+EB),(y, Dz+CB)\}$\\
&$\{(z, Eu+AC),(u, Dz+BC)\}$\\
&$\{(t, Eu+CA),(u, Bt+DA)\}$\\
&$\{(t, Cx+ED),(x, Bt+AD)\}$\\
\cline{2-2}
&$\gcd(A,B) = \gcd(A,E) =\gcd(B,D)= \gcd(C,D) = \gcd(C,E) = 1$\\
\cline{2-2}
&mutation generated by $(BC)(DE)$ and $(AB)(DE)$\\
\hline
\hline
\multirow{9}{*}{$(b,c)=(2,1)$ hexagon}
&$\{(x, Ay^2+BGy+CFG^2),(y, Dx+EFG)\}$\\
&$\{(z, Ay^2+BGy+FCG^2),(y, Ez+DCG)\}$\\
&$\{(z, Fu^2+BDu+ACD^2),(u, Ez+GCD)\}
$ \\
&$\{(v, Fu^2+BDu+CAD^2),(u, Gv+EAD)\}
$\\
&$\{(v, Ct^2+BEt+FAE^2),(t, Gv+DAE)\}$\\
&$\{(x, Ct^2+BEt+AFE^2),(t, Dx+GFE)\}$\\
\cline{2-2}
&$\gcd(A,G)=\gcd(C,E)=\gcd(D,E)=\gcd(D,F)=\gcd(D,G)=\gcd(E,G)$\\
& $=\gcd(A,B,C)=\gcd(A,B,F)=\gcd(B,C,F) =1$\\
\cline{2-2}
& mutation generated by $(CF)(DE)$ and $(AC)(EG)$
\\
\hline
\hline
\multirow{14}{*}{$(b,c)=(3,1)$ octagon}
&$\{(x, Ay^3+BKLy^2+CHK^2L^2y+DGH^2K^3L^3),(y, Ex+FGHKL^2)\}$\\
&$\{(z, Ay^3+BLKy^2+CHL^2K^2y+GDH^2L^3K^3),(y, Fz+EDHLK^2)\}$\\
&$\{(z, Gu^3+CEKu^2+BDE^2K^2u+AHD^2E^3K^3),(u, Fz+LHDEK^2)\}$\\
&$\{(w,Gu^3+CKEu^2+BDK^2E^2u+HAD^2K^3E^3),(u, Lw+FADKE^2)\}$\\
&$\{(w, Hs^3+BFEs^2+CAF^2E^2s+GDA^2F^3E^3),(s, Lw+KDAFE^2)\}$\\
&$\{(v, Hs^3+BEFs^2+CAE^2F^2s+DGA^2E^3F^3),(s, Kv+LGAEF^2)\}$\\
&$\{(v, Dt^3+CLFt^2+BGL^2F^2t+HAG^2L^3F^3),(t, Kv+EAGLF^2)\}$\\
&$\{(x, Dt^3+CFLt^2+BGF^2L^2t+AHG^2F^3L^3),(t, Ex+KHGFL^2)\}$\\
\cline{2-2}
&
$\gcd(A,K)=\gcd(A,L)=\gcd(D,F)=\gcd(D,L)=\gcd(E,F)=\gcd(E,G)=\gcd(E,H)$\\
&$=\gcd(E,K)=\gcd(E,L)=\gcd(F,H)=\gcd(F,K)=\gcd(F,L)=\gcd(G,K)=\gcd(K,L)$\\
&$=\gcd(A,B,H)=\gcd(C,D,G)$\\
&$=\gcd(A,B,C,D)=\gcd(A,B,C,G)=\gcd(B,C,D,H)=\gcd(B,C,G,H)=1$\\
\cline{2-2}
&  mutation generated by $(DG)(EF)(KL)$ and $(AD)(BC)(FK)(GH)$\\
\hline
\end{tabular}
}
\caption{Finite type normalized LP algebras of rank two}
\label{fig:finite}
\end{figure}

\medskip

Now suppose that $b > 0$ and thus also $c > 0$.  We apply Proposition \ref{P:den}.  It is clear that the denominator vector is an invariant of the equivalence class of a seed.  Thus $\A$ can be of finite type only if the set $\Phi^+ \cup \{-\alpha_1,-\alpha_2\}$ of almost positive roots is finite, which happens if and only if the Cartan matrix $$
\left( \begin{array}{cc} 2 & -b \\ -c &2 \end{array} \right)
$$
is of finite type.  Thus we are reduced to $(b,c)$ being equal to one of $(1,1)$, $(1,2)$ and $(1,3)$.  For each of these cases, we shall now construct a normalized LP algebra $\A'$ with initial seed $t_1 = \{(x,P(y)),(y,Q(x))\}$.  
By Lemma \ref{lem:normal}, this normalized LP algebra $\A'$ will be the normalization of $\A$.

Before we begin, we note that for all the exchange polynomials we shall encounter, we have $\hF = F$.

\medskip

\noindent {\bf{The case $(b,c) = (1,1)$}}.

In this case  we have
$$t_1 = \{(x, A_0y+B_0),(y, C_0x+D_0)\}$$
where $A_0,B_0,C_0,D_0 \in S$, and $A_0,C_0 \neq 0$.  Let $E = \gcd(B_0,D_0)$ (which is only defined up to a unit in $S$).  Renaming the coefficients, we shall write the initial seed now as 
$$\{(x, Ay+BE),(y, Cx+DE)\},$$ 
where $A:= A_0$, $B:=B_0/E$, $C:=C_0$, and $D:=D_0/E$, and $B,D$ now satisfy $\gcd(B,D)=1$.  Also the irreducibility of $Ay+BE$ and $Cx+DE$ is equivalent to $\gcd(A,B) = \gcd(A,E) = \gcd(C,E) = \gcd(C,D) = 1$, and there are no further restrictions on $A,B,C,D,E$.

Mutation at $x$ gives the seed 
$$\{(z, Ay+BE),(y, Dz+BC)\}.$$
Indeed, the mutation rule tells us to substitute $z = BE/x$ into $Cx+DE$, and then kill any common factors with $BE$. Thus we kill the factor $E$ in $BCE+DEz$, but no other factor 
since $D$ is relatively prime with $B$ and $C$, by construction.

This seed has exactly the same form as the initial seed, except the coefficients $A,B,C,D,E$ are permuted as follows:
$$B \longleftrightarrow E, \;\;\; C \longleftrightarrow D.$$ 
Furthermore this relabeling just permutes the five relatively prime pairs $$\{(B,D), (A,B),(A,E),(C,E),(C,D)\}.$$  As a result, we know that the next mutation 
will be identical to the first one, just with a different permutation of coefficients. 
Proceeding in this fashion, one checks that the list of clusters has the form given in Figure \ref{fig:finite}.  An example is the above calculation obtaining cluster 
$\{(z, Ay+BE),(y, Dz+BC)\}$ from the cluster $\{(x, Ay+BE),(y, Cx+DE)\}$.
The fact the cluster variables ``wrap around'' after five mutations is a simple computer calculation with rational functions.  For example, it says that 
$$
z = \dfrac{Ay+BE}{x} \qquad u = \dfrac{Dz+CB}{y} \qquad t = \dfrac{Eu+AC}{z}
$$
gives $t =\dfrac{Cx+ED}{y}$.  It follows from denominator vector considerations (Proposition \ref{P:den}) that all of $x,y,z,u,t$ are distinct even up to units. 

Note that the subgroup of the permutation group of $\{A,B,C,D,E\}$ generated by the involutions $(BC)(DE)$ and $(AB)(DE)$ is a dihedral group of order 10.  The element in the center of this subgroup acts on the seeds by swapping the two cluster variables and exchange polynomials.

\medskip

\noindent
{\bf{The case $(b,c) = (1,2)$}}.

The initial seed in this case looks like 
$$\{(x, A_0y^2+B_0y+C_0),(y, D_0x+E_0)\}.$$
Let $F_0 = \gcd(C_0,E_0)$, and define $C_1:=C_0/F_0$ and $E:=E_0/F_0$, and let $G=\gcd(C_1,B_0,F_0)$, and set $C:=C_1/G$, $B=B_0/G$, and $F:=F_0/G$.
This writes the seed in the form
$$t_1 = \{(x, Ay^2+BGy+CFG^2),(y, Dx+EFG)\}$$ and by construction we have that $\gcd(CG,E)=\gcd(C,B,F)=1$.  Furthermore, the irreducibility of $Ay^2+BGy+CFG^2$ and $Dx+EFG$ imply that $\gcd(A,BG,CFG) = \gcd(D,EFG) = 1$.  Together these $\gcd$ conditions are equivalent to the relatively prime pairs and triples listed in Figure \ref{fig:finite}.  Note that these $\gcd$ conditions do not imply that $Ay^2+BGy+CFG^2$ does not factor into two linear factors.  This is a condition that is separately imposed.

Mutation of $t_1$ at $x$ produces the seed 
$$\{(z, Ay^2+BGy+CFG^2),(y, Ez+CDG)\}.$$
This follows from the definitions, together with the observation that $Ez+CDG$ has no common factor with $(F_x)|_{y\leftarrow 0} = CFG^2$, since $\gcd(E,CDG) = 1$.  This new seed is identical to the original seed with coefficients permuted as follows:
$$F \longleftrightarrow C, \;\;\; E \longleftrightarrow D.$$
Furthermore, this relabeling just permutes the relatively prime pairs and triples.

Similarly, the mutation at $y$ of $t_1$ produces
$$\{(x, Ct^2+BEt+AFE^2),(t, Dx+EFG)\}.$$
To see this we note that $(F_y)|_{x\leftarrow 0} = EFG$, and substituting $y = EFG/t$  into $F_x = Ay^2+BGy+CFG^2$ we obtain the Laurent polynomial $A(EFG)^2 t^{-2} + BEFG^2 t^{-1} +CFG^2$.  We then divide by $FG^2$ and multiply by $t^2$ to obtain $F'_x = Ct^2+BEt+AFE^2$.  Note that there are no further common factors among the coefficients because we have $\gcd(C,BE,AFE^2) = 1$.  This seed is identical to the original seed with coefficients permuted as follows:
$$A \longleftrightarrow C, \;\;\; E \longleftrightarrow G.$$
Again, this relabeling just permutes the relatively prime pairs and triples.

Since the form of the seed always remains the same, it is easy to repeatedly mutate it.
An involved but straightforward computer calculation then checks that after the six mutations one indeed comes back to the original variables.  One checks that the resulting list of clusters is as given in Figure \ref{fig:finite}.  It follows from denominator vector considerations (Proposition \ref{P:den}) that all of $x,y,z,u,v,t$ are distinct even up to units.  

In this case the subgroup of the permutation group on $\{A,B,C,D,E,F,G\}$ generated by the involutions $(CF)(DE)$ and $(AC)(EG)$ has order six.

\medskip

\noindent
{\bf{The case $(b,c) = (1,3)$}}.

The initial seed in this case looks like 
$$\{(x, A_0y^3+B_0y^2+C_0y+D_0),(y, E_0x+F_0)\}$$
where $A_0,B_0,C_0,D_0,E_0,F_0 \in S$.

Let $G_0 = \gcd(D_0,F_0)$ and define $D_1:=D_0/G_0$ and $F=F_0/G_0$.  Then let $H_0=\gcd(D_1,C_0,G_0)$, and define $D_2:=D_1/H_0$, $C_1:=C_0/H_0$ and $G_1=G_0/H_0$.  Then let $K=\gcd(D_2,C_1,B_0,H_0)$ and define $D:=D_2/K$, $C_2:=C_1/K$, $B_1:=B_0/K$, and $H_1:=H_0/K$.  Finally, let $L=\gcd(G_1,C_2,B_1,H_1)$ and define $G:=G_1/L$, $C:=C_2/L$, $B:=B_1/L$, and $H:=H_1/L$. As a result, the seed can be written in the form
$$t_1 = \{(x, Ay^3+BKLy^2+CHK^2L^2y+DGH^2K^3L^3),(y, Ex+FGHKL^2)\}$$ 
and we know that $$\gcd(DHK^2L,F)=\gcd(DK,CKL,GL)=\gcd(D,CL,BL,HL)=\gcd(G,C,B,H)=1.$$
Also the irreducibility of the exchange polynomials gives
$$
\gcd(A,BKL,CHK^2L^2,DGH^2K^3L^3) = \gcd(E,FGHKL^2) = 1.
$$
Together these $\gcd$ conditions are equivalent to the relatively prime pairs, triples, and quadruples listed in Figure \ref{fig:finite}.

We claim that mutation at $x$ produces the seed 
$$t_2 = \{(z, Ay^3+BKLy^2+CHK^2L^2y+DGH^2K^3L^3),(y, Fz+DEHLK^2)\}.$$
To see this, we substitute $x = (F_x)|_{y \leftarrow 0}/z = DGH^2K^3L^3/z$ into $F_y$ to obtain the Laurent polynomial $DEGH^2K^3L^3z^{-1}+FGHKL^2 = (DEHLK^2z^{-1}+F)GHKL^2$.  To check that there are no common factors between $DEHLK^2z^{-1}+F$ and $DGH^2K^3L^3$ it is enough to verify that $\gcd(F,DEHLK^2)=1$, which follows from the $\gcd$ conditions listed in Figure \ref{fig:finite}.  This new seed is identical to the original seed with coefficients permuted as follows:
$$K \longleftrightarrow L, \;\;\; E \longleftrightarrow F, \;\;\; G \longleftrightarrow D.$$
Furthermore, this relabeling just permutes the $\gcd$ conditions.

Similarly, the mutation at $y$ of the original seed produces
$$\{(x, Dt^3+CFLt^2+BGF^2L^2t+AHG^2F^3L^3),(t, Ex+FGHKL^2)\}.$$
To see that $\gcd(D,CFL,BGF^2L^2,AHG^2F^3L^3) = 1$, we use the conditions 
$$
\gcd(D,F) =\gcd(D,L) =\gcd(C,D,G) = \gcd(A,B,C,D) = \gcd(B,C,D,H) = 1$$
from Figure \ref{fig:finite}.
The new seed is identical to the original seed with coefficients permuted as follows:
$$B \longleftrightarrow C, \;\;\; K \longleftrightarrow F, \;\;\; G \longleftrightarrow H, \;\;\; A \longleftrightarrow D.$$
Furthermore, this relabeling just permutes the $\gcd$ conditions.

Again, we know that we can just proceed mutating and we will be obtaining similar looking clusters where coefficients are just permuted as described above.  One checks that the resulting list of clusters is as given in Figure \ref{fig:finite}.  The check that the cluster variables correctly wrap around is now a very involved computation with rational functions, which can be verified by computer.  In this case, the subgroup of the permutation group on $\{A,B,C,D,E,F,G,H,K,L\}$ generated by $(DG)(EF)(KL)$ and $(AD)(BC)(FK)(GH)$ has order 8.
\end{proof}

As a corollary we have
\begin{cor}\label{cor:rank2}
A rank two LP algebra $\A$ is of finite type if and only if one of the following equivalent conditions hold:
\begin{itemize}
\item
$\A$ has finitely many equivalence classes of seeds;
\item
$\A$ has finitely many distinct cluster variables, up to units;
\item
$\A$ has finitely many distinct denominator vectors with respect to some seed.
\end{itemize}
\end{cor}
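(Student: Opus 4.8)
The plan is to show the cyclic chain of implications among the three finiteness conditions together with the property of being of finite type, using the structural classification in Theorem \ref{thm:rank2} as the engine. Concretely, I would prove: (finite type) $\Rightarrow$ (finitely many equivalence classes of seeds) $\Rightarrow$ (finitely many denominator vectors) $\Rightarrow$ (finitely many cluster variables up to units) $\Rightarrow$ (finite type). The first implication is essentially the definition: if $\A$ has a normalization $\A'$ with finitely many seeds, then since every seed of $\A$ is equivalent to a seed of $\A'$ (the surjection $p$ from the definition of normalization), $\A$ has at most as many equivalence classes of seeds as $\A'$ has seeds.

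For the step (finitely many equivalence classes of seeds) $\Rightarrow$ (finitely many denominator vectors), I would observe, as already noted in the proof of Theorem \ref{thm:rank2}, that the denominator vector $\delta(m)$ is an invariant of the equivalence class of a seed: equivalent seeds have cluster variables differing by units in $S$, hence the same exponents $d_1(m), d_2(m)$. So finitely many equivalence classes forces finitely many denominator vectors. The step (finitely many denominator vectors) $\Rightarrow$ (finite type) is where the real content lies, and it is essentially a repackaging of Theorem \ref{thm:rank2}: if there are infinitely many seeds up to equivalence, I want to rule this out whenever the denominator vectors are finite. Here I would split into the cases of Theorem \ref{thm:rank2}. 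If $b = 0$ (or symmetrically $c = 0$), Lemma \ref{L:allequal} and Proposition \ref{P:Q0} show directly that $\A$ has a normalization with three or four seeds, so it is of finite type regardless. If $b, c \geq 1$, then by Proposition \ref{P:den} the set of denominator vectors equals $\Phi^+ \cup \{-\alpha_1, -\alpha_2\}$ for the Cartan matrix with off-diagonal entries $-b, -c$; this set is finite if and only if that Cartan matrix is of finite type, i.e. $bc \leq 3$, and in each such case Theorem \ref{thm:rank2} exhibits an explicit finite normalized LP algebra. Conversely if $bc \geq 4$ the denominator vector set is infinite. So "finitely many denominator vectors" pins us into exactly the finite-type cases.

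It remains to thread in "finitely many cluster variables up to units." Note (finite type) $\Rightarrow$ (finitely many cluster variables up to units) is immediate since a finite normalization has finitely many seeds, hence finitely many cluster variables, and every cluster variable of $\A$ is a unit multiple of one of these. For the reverse direction, (finitely many cluster variables up to units) $\Rightarrow$ (finite type): if there are finitely many cluster variables up to units, then in particular there are finitely many distinct denominator vectors (each cluster variable determines its denominator vector, and unit multiples share it), so we are reduced to the previous step. This closes the cycle: all three conditions are equivalent to finite type.

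The main obstacle I expect is bookkeeping rather than mathematics: one must be careful that Proposition \ref{P:den} only applies when both $F_1$ and $F_2$ genuinely involve $x_2$ and $x_1$ respectively, and handle the degenerate cases ($b = 0$ or $c = 0$, and within $b=0$ the subcases $P \propto Q$ and $P \not\propto Q$) separately via Lemma \ref{L:allequal} and Proposition \ref{P:Q0}; one also must not confuse "equivalence classes of seeds" with "seeds of the normalization," though Lemma \ref{lem:seednorm} and the discussion of normalization make this routine. Once the degenerate cases are dispatched and Proposition \ref{P:den} is invoked, the equivalence of the three bulleted conditions with finite type is a short formal argument built entirely on Theorem \ref{thm:rank2}, which does the heavy lifting.
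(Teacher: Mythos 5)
Your proof is correct and follows exactly the route the paper intends: the paper states this corollary without an explicit proof, deriving it as an immediate consequence of Theorem \ref{thm:rank2}, and your cyclic chain of implications is built from precisely the ingredients of that theorem's proof (Proposition \ref{P:den} for the denominator-vector dichotomy, Lemma \ref{L:allequal} and Proposition \ref{P:Q0} for the degenerate cases $b=0$). Nothing essential differs from the paper's implicit argument; you have simply written out the bookkeeping the authors left to the reader.
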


\begin{proof}
 The ``only if" direction follows from Theorem \ref{thm:rank2}, since all the three properties hold for the LP algebras listed in Figure \ref{fig:finite}, and all three properties hold for a LP algebra $\A$ if and only if it holds for the normalization $\A'$ of $\A$.

For the ``if" direction, we first observe that finitely many equivalence classes of seeds implies finitely many distinct cluster variables, up to units, which in turn implies finitely many distinct denominator vectors with respect to any seed. So suppose $\A$ has finitely many distinct denominator vectors with respect to some seed.  Then in the proof of Theorem \ref{thm:rank2} we have constructed a normalization $\A'$ of $\A$ with finitely many seeds.  Thus $\A$ is finite type.  
\end{proof}

A rank two LP algebra of infinite type has an exchange graph which is a doubly-infinite path.  A normalized rank two LP algebra of finite type has an exchange graph which is a triangle, square, pentagon, hexagon, or octagon as described in Figure \ref{fig:finite}.  Only triangles do not occur as exchange graphs of cluster algebras of rank two.  However, the exchange graphs of finite type LP algebras in higher rank is vastly richer than those of cluster algebras, as we shall partly explore in \cite{LP2}.

\begin{theorem}
Suppose $\A$ is a rank two normalized LP algebra of finite type.  Then the list of seeds of $\A$ has the form given in Figure \ref{fig:finite}.
\end{theorem}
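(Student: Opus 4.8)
The plan is to read the statement off from the proof of Theorem \ref{thm:rank2}: for each admissible pair of exchange-polynomial degrees that proof already exhibits a normalized rank two LP algebra together with the complete list of its seeds, and the present statement merely asserts that these are the only possibilities.

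First I would fix any seed $t_1 = \{(x_1,F_1),(x_2,F_2)\}$ of $\A$. By (LP2), $F_1$ is a polynomial $P(x_2)$ in $x_2$ alone and $F_2$ a polynomial $Q(x_1)$ in $x_1$ alone, so \emph{every} seed of $\A$ automatically has the shape appearing in the rows of Figure \ref{fig:finite}. Writing $c = \deg P$ and $b = \deg Q$ and swapping $x_1$ and $x_2$ if necessary (clusters are unordered) so that $c \geq b$, Theorem \ref{thm:rank2} tells us that finite type forces $b = 0$ or $(b,c) \in \{(1,1),(1,2),(1,3)\}$; this fixes which row we are in.

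Next I would normalize the coefficients of $t_1$ to match the first seed of that row. If $b = 0$ and $P \propto Q$ (so also $c = 0$), Lemma \ref{L:allequal} produces the triangle, with the monomial $M$ appearing there necessarily trivial since rank two leaves no further cluster variable for it to involve. If $b = 0$ and $P \not\propto Q$, Proposition \ref{P:Q0} gives $x_0 \propto x_4$, so $\A$ has exactly the four seeds of the square, with $\hat P = P/x_2^{d}$ and $\hat R = \hat P$ as in that proof. In the cases $(b,c) = (1,1),(1,2),(1,3)$ I would introduce the greatest common divisors ($E$; then $F,G$; then $G,H,K,L$) exactly as in the proof of Theorem \ref{thm:rank2}: the irreducibility (LP1) of $P$ and $Q$ becomes precisely the coprimality conditions displayed in Figure \ref{fig:finite}, and $t_1$ now coincides with the first seed of that row. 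From the proof of Theorem \ref{thm:rank2}, each mutation carries a seed of this shape to a seed of the same shape with coefficients permuted by the explicit involutions shown there, and a rational-function computation checks that after $3,4,5,6$, or $8$ mutations the cluster variables return to their starting values; denominator vector considerations (Proposition \ref{P:den}) show the cluster variables met along the way are pairwise distinct up to units. Thus the listed seeds exhaust all the seeds; since $\A$ is normalized, any two equivalent seeds are equal, so the list of seeds of $\A$ is exactly the one in the figure.

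The main obstacle will be the verification that mutation preserves the displayed form of the seed and that the cluster variables wrap around after the stated number of steps — the explicit (and in the $(1,3)$ case rather involved) computations with rational functions already referenced in the proof of Theorem \ref{thm:rank2}, most easily confirmed by computer. The remaining steps — reducing to the five cases, introducing the gcd normalization, matching the coprimality conditions, and invoking normalization to upgrade ``up to equivalence'' to equality — are routine given Theorem \ref{thm:rank2}.
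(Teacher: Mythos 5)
Your reduction to the five cases, the gcd normalization of the initial seed, and the matching of (LP1) with the coprimality conditions all agree with the setup in the paper, but the final step --- ``since $\A$ is normalized, any two equivalent seeds are equal, so the list of seeds of $\A$ is exactly the one in the figure'' --- is a non-sequitur, and it skips over what is in fact the entire content of this theorem beyond Theorem \ref{thm:rank2}. Normalization only says that equivalent seeds \emph{within} $\A$ coincide; it does not say that the seeds of $\A$ coincide with those of the particular normalized LP algebra constructed in the proof of Theorem \ref{thm:rank2}. By Lemma \ref{lem:seednorm} the seeds of your $\A$ are merely \emph{equivalent} to the listed ones: seed mutation determines the new exchange polynomials only up to units of $S$, so the cluster variables not adjacent to the initial cluster ($u$ in the pentagon; $u,v$ in the hexagon; $u,w,v,s$ in the octagon) and the corresponding exchange polynomials may each differ from those in Figure \ref{fig:finite} by units.

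The paper's proof is devoted precisely to closing this gap: it shows that these unit discrepancies can always be absorbed into the non-uniqueness of the gcd factorizations of the coefficients. For instance, in the $(1,1)$ case the factorizations $BE$ and $DE$ leave one unit of freedom ($E\mapsto \alpha E$, $B,D\mapsto \alpha^{-1}B,\alpha^{-1}D$, which does not change the initial seed), and this single degree of freedom exactly accounts for the single undetermined unit in $u'$; in the $(1,2)$ and $(1,3)$ cases one checks that rescaling $F,H$ (resp.\ $G,H,K,L$) and compensating in the other coefficients moves $u,v$ (resp.\ $u,w,v,s$) by arbitrary independent units. Without this verification you have only shown that $\A$ is equivalent, seed by seed, to an algebra of the displayed form --- not that its seeds literally have that form, which is what the theorem asserts. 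You need to add this unit-matching argument for each case.
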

\begin{proof}
Let $t_1$ be a seed of $\A$.  In the proof of Theorem \ref{thm:rank2} we have constructed a normalized LP algebra with initial seed $t_1$, and list of seeds given by Figure \ref{fig:finite}.  The two LP algebras will be normalizations of each other by Lemma \ref{lem:normal}.  By the definition of a normalized LP algebra, it follows that there is a bijection $t \leftrightarrow t' = \phi(t)$ between the seeds of $\A$ and $\A'$, and that under this bijection $t$ and $\phi(t)$ are equivalent.  In other words, the only possible discrepancy between the seeds of $\A$ and the list of seeds given by Figure \ref{fig:finite} is that the seeds have have been replaced by equivalent ones.  We shall show that this discrepancy can always be obtained by modifying the coefficients $A,B,C,\ldots$ used in Figure \ref{fig:finite} by units.  For the $b=0$ cases, the situation is trivial. 

Suppose $(b,c) = (1,1)$ and let the seed $t_1$ of $\A$ be $\{(x, Ay+BE),(y, Cx+DE)\}$.  This uniquely determines the cluster variables $z$ and $t$ in adjacent seeds, though the exchange polynomials in adjacent seeds are only determined up to units.  Nevertheless, once we know the last cluster variable $u$ in $\A$, all the exchange polynomials are determined.  So in fact, there is one degree of freedom, and this corresponds to the degree of freedom in the factorizations $BE$ and $DE$: we can modify $E$ by a unit and modify $B$ and $D$ by the inverse unit.  
Let $u'$ denote the corresponding cluster variable in $\A'$.  Since $u' = (Dz+CB)/y = (ADy+BCx+BDE)/xy$, we see that modifying $E$ by a unit and modifying $B$ and $D$ by the inverse unit indeed modifies $u'$ by an arbitrary unit.

Now suppose $(b,c)=(1,2)$.  Let $\A$ have initial seed $\{(x, Ay^2+BHy+CFH^2),(y, Dx+EFH)\}$.  Let $\A'$ be the normalized LP algebra with list of seeds given in Figure \ref{fig:finite}.  Then the cluster variables $u'$ and $v'$ in $\A'$ may differ from the cluster variables $u$ and $v$ in $\A$ by units.  Indeed, if we modify $F$ by a unit $\alpha$ and $C, E$ by $\alpha^{-1}$ (not changing the initial seed), we find that $u'$ is modified by $\alpha^{-1}$.  Similarly if we modify $H$ by a unit $\beta$ and $B,C,F$ by $\beta^{-1}$ (again not changing the initial seed), then $v'$ is modified by $\beta^{-1}$.  So we conclude that $\A$ is indeed of the form in Figure \ref{fig:finite}.

Finally let $(b,c)=(1,3)$.  We proceed in the same way.  Modifying $F,D$ by $\alpha$ and $G$ by $\alpha^{-1}$ changes $u', w', v', s'$ by $\alpha,\alpha^2,\alpha,\alpha$ respectively.  Modifying $D,C,G$ by $\beta$ and $H$ by $\beta^{-1}$ changes $u',w',v',s'$ by $1,\beta,\beta,\beta$ respectively.  Modifying $D,C,B,H$ by $\gamma$ and $K$ by $\gamma^{-1}$ changes only $v'$ by $\gamma$.  Modifying $G,C,B,H$ by $\delta$ and $L$ by $\delta^{-1}$ changes $w'$ by $\delta$.  This allows us to modify $u',w',v',s'$ by arbitrary units, completing the proof.
\end{proof}

\begin{remark}
 Our classification of the rank two finite type LP algebras is essentially what Fomin and Zelevinsky (in the context of cluster algebras) call universal coefficients, see \cite[Section 12]{CA4}.
\end{remark}

\section{Examples} \label{sec:ex}
\subsection{The Gale Robinson LP algebra}\label{sec:GaleRobinson}
In \cite{FZLP}, Fomin and Zelevinsky studied a number of multi-dimensional recurrence sequences, establishing the Laurent phenomenon.  These include: the cube recurrence, the Somos sequences, and the Gale-Robinson sequence.  As an example we show how the following case of the Gale-Robinson recurrence fits into our framework:
\begin{equation}\label{E:Gale}
y_iy_{i+6} = y_{i+3}^2 + y_{i+2}y_{i+4}+y_{i+1}y_{i+5}.
\end{equation}
The recurrence \eqref{E:Gale} defines all $y_i$ given the initial $y_1,y_2,\ldots,y_6$.  

We take $S =\Z$ and $\FF = \Q(y_1,y_2,\ldots,y_6)$.  As initial seed we have
\begin{gather*}
t_1 = \{(y_1,y_4^2+y_3 y_5+y_2 y_6),(y_2,y_3 y_4^2+y_3^2 y_5+y_1y_5^2+y_1y_4 y_6),\\(y_3,y_2 y_4^2 y_5+y_1 y_4 y_5^2+y_1 y_4^2 y_6+y_2^2 y_5 y_6+y_1 y_2 y_6^2),  
(y_4,y_2 y_3^2 y_5+y_1 y_2 y_5^2+y_2^2 y_3 y_6+y_1 y_3^2 y_6+y_1^2 y_5 y_6),\\(y_5,y_3^2 y_4+y_1 y_3 y_6+ y_2y_4^2+y^2_2 y_6),(y_6,y_3^2+y_2 y_4+y_1 y_5)\}
\end{gather*}
where for clarity we have listed the cluster variables next to the corresponding exchange polynomial.  It is not difficult to check that all the exchange polynomials are irreducible, and satisfy $\hF = F$.

Mutating at $y_1$ we obtain the seed
\begin{gather*}
t_2 = \{(y_7,y_4^2+y_3 y_5+y_2 y_6),(y_2,y_5^2+y_4 y_6+y_3 y_7),(y_3,y_4 y_5^2+y_4^2 y_6+y_2 y_6^2+y_2y_5 y_7),
\\
(y_4,y_3 y^2_5 y_6+y_2 y_5 y_6^2+y_2  y^2_5 y_7+,y_3^2 y_6 y_7+y_2y_3 y_7^2),
\\
(y_5,y_3 y_4^2y_6+y_2 y_3y^2_6+y_3^2 y_4 y_7+y_2y_4^2y_7+y^2_2 y_6y_7),
(y_6,y_4^2 y_5+y_2 y_4 y_7+y_3 y_5^2+y^2_3 y_7)\}
\end{gather*}
where $y_7$ is the new cluster variable, related to $y_1$ via the formula
$$
y_1y_7 = y_4^2+y_3 y_5+y_2 y_6.
$$
Note that $t_2$ can be obtained from $t_1$ by reindexing the $y_i$'s and thus $t_1$ and $t_2$ are similar seeds in the language of Section \ref{sec:finitetype}.  It follows that if we mutate $t_2$ at $y_2$, and so on, the form of the seeds will remain the same, and we will generate the recurrence \eqref{E:Gale}.  By Theorem \ref{T:Laurent}, it then follows that all the $y_i$ defined by \eqref{E:Gale} are Laurent polynomials in $y_1,y_2,\ldots,y_6$.  It is not however clear how to describe all the seeds of this LP algebra.

\begin{remark}
Essentially all the examples in \cite{FZLP} can be fit into our framework in this way: the exchange polynomials of the initial seed can be calculated by repeatedly 
mutating the polynomial defining the recurrence relation ($y_4^2+y_3 y_5+y_2 y_6$ in our example above).  One technical point is that we require the recurrence 
polynomial to be irreducible.  This can usually be overcome by introducing coefficients: for example $1+x^3$ is reducible in $\Z[x]$, but $A+x^3$ is irreducible in 
$\Z[x,A]$.
\end{remark}

\begin{remark}
The work of Andrew Hone on the Laurent phenomenon beyond the cluster case \cite{Ho} contains more examples that fit into our Laurent phenomenon algebras setting.
His recurrence (5.1) is one such example.
\end{remark}

\subsection{LP algebras of finite mutation type} \label{sec:finitemutation}
As the following example shows, there are LP algebras of finite mutation type which do not fall into the cluster setting.

Take the coefficient ring $S = \Z$, ambient field $\FF = \Q(y_1,y_2,y_3)$ and initial seed $$t = \{(y_1, y_2+y_3+1),(y_2, y_1^2+y_1y_3+y_3^2), (y_3, y_2+y_1+1)\}.$$ Then one obtains the exchange graph shown in Figure \ref{fig:lp1}.  The initial seed corresponds to the vertex shared by the three bricks labeled $y_1$, $y_2$, and $y_3$.

\begin{figure}[ht]
    \begin{center}
\setlength{\unitlength}{2pt}
\begin{picture}(240,60)(-10,0)
\thicklines
\put(5,10){\line(1,0){220}}
\put(5,30){\line(1,0){220}}
\put(5,50){\line(1,0){220}}
\put(25,10){\line(0,1){20}}
\put(85,10){\line(0,1){20}}
\put(145,10){\line(0,1){20}}
\put(205,10){\line(0,1){20}}
\put(10,30){\line(0,1){20}}
\put(40,30){\line(0,1){20}}
\put(70,30){\line(0,1){20}}
\put(100,30){\line(0,1){20}}
\put(130,30){\line(0,1){20}}
\put(160,30){\line(0,1){20}}
\put(190,30){\line(0,1){20}}
\put(220,30){\line(0,1){20}}
\put(113,0){$z$}
\put(113,58){$w$}
\put(23,38){$y_0$}
\put(53,18){$y_1$}
\put(83,38){$y_2$}
\put(113,18){$y_3$}
\put(143,38){$y_4$}
\put(173,18){$y_5$}
\put(203,38){$y_6$}
\put(53,38){$x_1$}
\put(113,38){$x_3$}
\put(173,38){$x_5$}
\end{picture}
\qquad
    \end{center} 
    \caption{Two-layer brick wall with two brick sizes.
}
    \label{fig:lp1}
\end{figure}
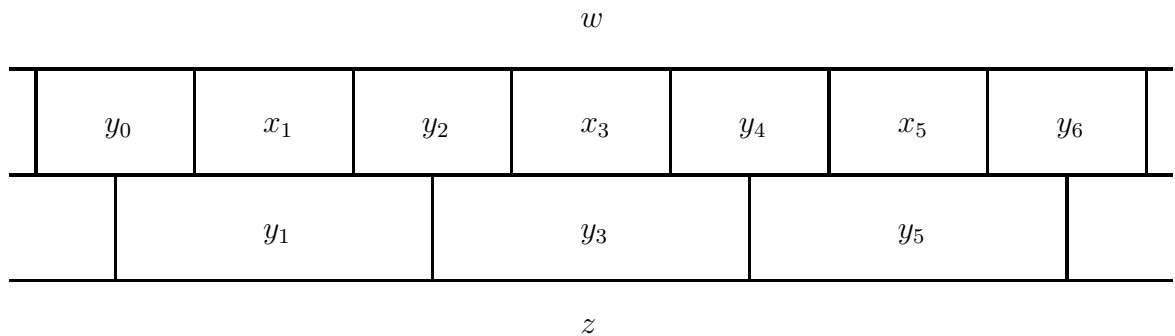
Any seed in this LP algebra is similar to either the initial seed, or one of the following three seeds:
$$\{(y_1, 1+x_1),(y_2, 1+y_1(2+x_1)+y_1^2(1+x_1+x_1^2)), (x_1,1+y_1+y_2)\},$$
$$\{(y_1, y_3^2+z+y_3 z),(z, y_1^2+y_1y_3+y_3^2), (y_3, y_1^2+z+y_1z)\},$$
$$\{(x_1,1+w+y_2w),(y_2, 3x_1^3+x_1^4+3x_1(1+w)+(1+w)^2+x_1^2(4+w)), (w, 1+x_1)\}.$$
The similarity type of a seed depends on the shapes of the bricks that the corresponding vertex lies in.  We encourage the reader to compare this example with the two-layer brick wall example in \cite{CA1}.

\subsection{Chekhov-Shapiro LP algebras}
In their work on Teichm\"{u}ller spaces 
of Riemann surfaces with orbifold points Chekhov and Shapiro \cite{ChSh} study a generalization of cluster algebras, which they call {\it generalized cluster algebras}.
They show that their algebras satisfy the Laurent phenomenon and have the same finite type classification as cluster 
algebras. 

LP algebras generalize Chekhov and Shapiro's algebras in a similar manner to the way LP algebras generalize cluster algebras (as in Section \ref{sec:LPCL}); 
that is, the dynamics studied in \cite{ChSh} are a special case of LP algebra dynamics with some assumption on the non-degeneracy of coefficients.  
All the exchange polynomials of a cluster algebra are binomials, so in particular the Newton polytope of the exchange polynomials are line segments.  
The Chekhov-Shapiro LP algebras are essentially those LP algebras $\A$ for which there is a cluster algebra $\A'$, together with a bijection between the seeds 
of $\A$ and $\A'$ under which the Newton polytopes of all the exchange polynomials are identical. 

\begin{example} [\cite{ChSh}]
Let $S = \Z[A,B,C,P,Q]$, $\FF=\Frac(S)(x,y)$.  Consider the initial seed $t = \{(x, A+By+Cy^2),(y, Q+Px)\}$.  The LP algebra $\A(t)$ with initial seed $t$ is a Chekhov-Shapiro LP algebra, and it was shown in \cite{ChSh} that $\A(t)$ has the same cluster complex as the type $B_2$ cluster algebra.  The Newton polytope of $F_x$ is the line segment connecting the lattice points $(0,0)$ with $(0,2)$, even though $F_x$ is not a binomial.  This agrees with the Newton polytopes of the initial seed of the cluster algebra of type $B_2$, which can be taken to be $t' = \{(x, A+Cy^2),(y, Q+Px)\}$.

Note that in this case the close connection between $\A(t)$ and $\A(t')$ also follows from our Theorem \ref{thm:rank2}.
\end{example}

The Chekhov-Shapiro LP algebras are a much narrower generalization of cluster algebras than the LP algebras in general. On the other hand, Chekhov-Shapiro LP algebras resemble cluster algebras more closely, and thus potentially more properties of cluster algebras extend to them. 

\subsection{Linear LP algebras}

Let $\Gamma$ be a directed, multiplicity-free, loopless graph on the vertex set $[n] = \{1,2,\ldots,n\}$. Thus, every edge $i \longrightarrow j$ is 
either present with multiplicity one or absent, for each ordered pair $(i,j)$, $i \not = j$.  Define the initial seed $t_\Gamma$ with variables $ (X_1,\ldots,X_n)$ and exchange polynomials 
$F_i = A_i + \sum_{i \to j} X_j$, where $i \to j$ denotes an edge in $\Gamma$. The following theorem is proved in \cite{LP2}.

\begin{theorem} \cite{LP2}
 For any directed graph $\Gamma$, the LP algebra $\A_{\Gamma}$ with initial seed $t_{\Gamma}$ is of finite type.
\end{theorem}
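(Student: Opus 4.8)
The plan is to exhibit an explicit finite family of seeds, containing the initial seed $t_\Gamma$ and closed under mutation, and then deduce finite type by counting. I will work over $S=\Z[A_1,\dots,A_n]$ with the $A_i$ algebraically independent; the general case follows because specializing the coefficients does not create new seeds in a normalization. By Theorem~\ref{T:Laurent}, every cluster variable $w$ of $\A_\Gamma$ is uniquely of the form $w = N/(X_1^{d_1}\cdots X_n^{d_n})$ with $N\in S[X_1,\dots,X_n]$ divisible by no variable and $d_i\in\Z_{\geq0}$. It suffices to prove that $\A_\Gamma$ has only finitely many cluster variables up to units of $S$: each exchange Laurent polynomial $\hat F_x$ that ever occurs equals $x\cdot x'$ for the mutation pair $x,x'$ of cluster variables, so finiteness of the set of cluster variables forces finiteness of the set of possible $\hat F_x$, hence of the tuples $\F$, hence of seeds up to the equivalence of Section~\ref{sec:LP}; normalizing then produces an honestly finite seed set.

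The heart of the argument is to index the seeds reachable from $t_\Gamma$ by a bounded amount of combinatorial data that also determines the exchange polynomials. I would attach to each such seed $t$ a decorated directed graph $\sigma(t)$ on $[n]$: for every monomial of every $F_j$ one records a path-like subconfiguration of $\Gamma$ together with a bookkeeping of which ``generation'' of cluster variable currently sits in each slot, arranged so that (i) $\sigma(t_\Gamma)$ is $\Gamma$ with trivial decoration; (ii) the $F_j$ of $t$ are recovered, up to a unit and a Laurent monomial, from $\sigma(t)$ by an explicit rule writing each $F_j$ as a sum of monomials, each a product of some $A_i$'s with some cluster variables; (iii) only finitely many $\sigma(t)$ occur. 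The form of this model is dictated by the examples already in the paper — the rank-two classification of Figure~\ref{fig:finite}, where mutation merely permutes the coefficients among a fixed list of shapes, and the two-layer brick-wall LP algebra of Section~\ref{sec:finitemutation}, whose finitely many similarity types are visibly of bounded complexity.

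The main work, and the main obstacle, is to verify that this family is closed under mutation: if $\sigma(t)$ lies in it, so does $\sigma(\mu_i(t))$, obtained by an explicit local move at $i$. This requires showing that $\hat F_j=F_j$ for every exchange polynomial occurring (the path-monomial shape should preclude the divisibility demanded in \eqref{hatF}, so no negative powers appear); performing the substitution \eqref{E:G}; determining precisely which common factors with $\hat F_i|_{X_j\leftarrow0}$ get removed; tracking the monomial reinstated at the end; and matching the outcome with the claimed rule. A convenient inductive aid is the Freezing construction of Section~\ref{sec:LP}: freezing a vertex $i$ turns $\A_\Gamma$ into the linear LP algebra $\A_{\Gamma\setminus i}$ over the enlarged coefficient ring $S[X_i^{\pm1}]$, so the bounded shapes for smaller graphs, known by induction on $n$, can be used as building blocks. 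The difficulty is genuine: unlike in the cluster case the exchange polynomials grow in degree under mutation — already for the transitive tournament on three vertices, one mutation turns a linear $F_j$ into a quadratic one — so there is no exchange matrix to propagate, and one must isolate the bounded invariant that the polynomials nevertheless encode while controlling the cancellations and monomial normalizations uniformly in the coefficients; this uniformity is exactly where algebraic independence of the $A_i$ (or coprimality conditions of the kind appearing in Figure~\ref{fig:finite}) is used.

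Granting closure, the family of $\sigma$'s is finite, so $\A_\Gamma$ has finitely many seeds up to equivalence and finitely many cluster variables up to units, and by the reduction of the first paragraph $\A_\Gamma$ is of finite type.
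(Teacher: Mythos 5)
The theorem you were asked to prove is only \emph{cited} in this paper from \cite{LP2}; the present paper contains no proof of it, so your proposal can only be judged on its own terms. Its overall strategy is the right one, and is in fact the one \cite{LP2} implements: exhibit an explicit finite family of seeds containing $t_\Gamma$ and closed under mutation (there the cluster variables are indexed by strongly connected subsets of $\Gamma$ and the clusters by maximal nested families). But the entire substance of that strategy is missing from your write-up. The decorated object $\sigma(t)$ is never defined; the ``explicit rule'' recovering each $F_j$ from $\sigma(t)$ is never given; and the closure of the family under mutation --- which you yourself identify as ``the main work, and the main obstacle'' and call ``genuine'' --- is not carried out. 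Verifying that $\hF_j = F_j$, performing the substitution \eqref{E:G}, and controlling exactly which common factors are cancelled is where all the content of the theorem lives (compare the amount of explicit computation needed even in rank two in the proof of Theorem \ref{thm:rank2}). As written, this is a plan for a proof, not a proof.

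Two further points in your reduction need repair even granting the combinatorial model. First, you deduce that $\A_\Gamma$ has finitely many cluster variables and hence finitely many equivalence classes of seeds, and then assert that ``normalizing then produces an honestly finite seed set.'' But Section \ref{sec:finitetype} explicitly warns that the implication from finitely many equivalence classes of seeds to finite type is only known in rank two (Corollary \ref{cor:rank2}) and ``is not clear in general.'' This is precisely why one must exhibit the seeds of a \emph{normalized} LP algebra explicitly rather than count equivalence classes; your argument silently uses the unproved converse. Second, the opening specialization claim --- that working over $\Z[A_1,\dots,A_n]$ with independent $A_i$ suffices ``because specializing the coefficients does not create new seeds in a normalization'' --- is both unnecessary (the seed $t_\Gamma$ is already defined with independent coefficients $A_i$) and unjustified: specializing coefficients can destroy the irreducibility required by (LP1) and can change which common factors are removed in forming $H_j$, so mutation does not obviously commute with specialization.
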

As we already saw when we looked at rank $2$, there are LP algebras of finite type which do not possess a linear seed. 

Let us identify subsets of vertices of $\Gamma$ with the corresponding 
induced subgraphs, for example we shall talk about {\it {strongly connected}} subsets, and so on.  Let $\I \subset 2^{[n]}$ denote the collection of strongly-connected subsets of $\Gamma$. A family of subsets $\s = \{I_1, \ldots, I_k\} \in \I$ is {\it nested} if
\begin{itemize}
 \item for any pair $I_i, I_j$ either one of them lies inside the other, or they are disjoint;
 \item for any tuple of disjoint $I_j$-s, they are the strongly connected components of their union.
\end{itemize}
The {\it support} $S$ of a nested family $\s = \{I_1, \ldots, I_k\}$ is $S = \bigcup I_j$.  A nested family is {\it maximal} if it is not properly contained in another nested family with the same support.

\begin{theorem} \cite{LP2}
Non-initial cluster variables in $\A_{\Gamma}$ are in bijection with elements of $\I$. The clusters of $\A_{\Gamma}$ are in bijection with maximal nested families of $\Gamma$.
\end{theorem}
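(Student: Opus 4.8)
The plan is to build an explicit combinatorial model for the seeds of $\A_\Gamma$, indexed by the maximal nested families of $\Gamma$, to check that this model is closed under mutation and contains the initial seed $t_\Gamma$, and then to invoke the uniqueness of the LP algebra generated by a fixed seed (Lemma~\ref{lem:seednorm}) together with the finite-type statement above to conclude that the model \emph{is} $\A_\Gamma$; the two claimed bijections then drop out of the construction. Throughout I will use that $\A_\Gamma$ has only finitely many seeds, which keeps the combinatorial recursion from escaping the model.

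First I would attach to each strongly connected $I \in \I$ an explicit Laurent polynomial $X_I \in \L(t_\Gamma)$ of the shape
\[
X_I = \frac{N_I}{\prod_{i \in I} X_i},
\]
with $N_I \in \P$ not divisible by any variable, defined as a sum over suitable subgraphs of the subgraph induced on $I$ and its out-neighbours; for a singleton $I = \{i\}$ this is just $F_i/X_i$, the mutation of $t_\Gamma$ at $i$. One checks directly that $X_I$ is a Laurent polynomial (this also follows a posteriori from Theorem~\ref{T:Laurent}), and, crucially, that distinct $I$ give distinct $X_I$: the denominator vector of $X_I$ with respect to $t_\Gamma$ is the characteristic vector of $I \subseteq [n]$, which is nonzero and recovers $I$. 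Next I would attach to each maximal nested family $\s$ with support $S$ a seed $t_\s$ whose cluster is $\{X_I : I \in \s\} \cup \{X_i : i \notin S\}$; this has exactly $n$ elements since a maximal nested family with support $S$ has exactly $|S|$ members (a standard property of maximal nested families of the building set $\I$). The exchange polynomial of each $X_I$ in $t_\s$ is prescribed by an explicit affine-linear formula read off from $\s$: a constant plus a linear combination of the cluster variables of $t_\s$ attached to the blocks and free vertices that receive an edge from $I$ in the contracted graph recorded by $\s$. One verifies (LP1)--(LP2) for every $t_\s$ and that $t_\emptyset = t_\Gamma$.

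The heart of the argument is a mutation-compatibility lemma: mutating $t_\s$ at one of its cluster variables produces $t_{\s'}$, where $\s'$ arises from $\s$ by the corresponding elementary move on maximal nested families --- deleting a block $I$ and inserting the unique block $I'$ for which $(\s\setminus\{I\})\cup\{I'\}$ is again nested of the same support, or toggling a singleton block attached to a vertex outside $S$. Proving this means running the mutation procedure of Section~\ref{sec:seeds} and tracking not only the new cluster variable $\hF_I/X_I$ (which one computes to equal $X_{I'}$), but \emph{all} the new exchange polynomials, including those of the unmutated variables; the key point is that the substitution $x_i \leftarrow \hF_i|_{x_j\leftarrow 0}/x'_i$ in \eqref{E:G} followed by removal of common factors exactly carries out the passage from $\s$ to $\s'$, and that the directedness of $\Gamma$ matches the possibly one-sided dependence of exchange polynomials in an LP algebra noted in the introduction. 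Granting this lemma, $\{t_\s\}$ is closed under mutation, contains $t_\Gamma$, and its mutation graph is connected (every $t_\s$ links back to $t_\emptyset$ by reversing moves), so by Lemma~\ref{lem:seednorm} the subring of $\FF$ it generates is $\A_\Gamma$, and since $\A_\Gamma$ is of finite type this list exhausts the seeds up to equivalence. Reading off the model, the non-initial cluster variables are exactly the $X_I$, $I \in \I$, and the clusters are exactly the clusters of the $t_\s$; the injectivity of $I \mapsto X_I$ and the fact that a cluster recovers its nested family make both correspondences genuine bijections.

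I expect the main obstacle to be the exchange-polynomial bookkeeping inside the mutation-compatibility lemma --- keeping the closed affine-linear form of the exchange polynomials of the \emph{unmutated} variables through an arbitrary mutation and showing they transform by the flip. Since LP mutation depends a priori on the whole seed and the common-factor removal in \eqref{E:G} is delicate, the induction must be arranged --- plausibly on $|S|$ or on mutation distance from $t_\Gamma$ --- so that whenever a block $I$ first appears it does so by mutating a seed $t_\s$ in which $I$ is already visible as the union of a nested sub-family, pinning down $N_I$ and $\hF_I$. A secondary and more routine point is establishing (LP1): irreducibility of every $N_I$ and of every exchange polynomial in every $t_\s$, which should reduce to a Newton-polytope or $\gcd$ computation in the style of Section~\ref{sec:LPCL}.
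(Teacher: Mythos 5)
This statement is imported from \cite{LP2}: the present paper states it with a citation and refers the reader there for details, so there is no internal proof to compare your attempt against. That said, your architecture --- explicit Laurent polynomials $X_I$ indexed by strongly connected sets, seeds $t_\s$ indexed by maximal nested families, a mutation-compatibility lemma showing the model is closed under mutation, and Lemma \ref{lem:seednorm} to identify the model with $\A_\Gamma$ --- is essentially the strategy actually carried out in \cite{LP2}, so the plan is sound. (One small simplification: once your model contains $t_\Gamma$ and is closed under mutation, it already exhausts all seeds by the connectedness built into the definition of an LP algebra, so you do not additionally need the finite-type theorem for that step.)

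The gap is that everything that makes the theorem true is deferred. You never define $N_I$ (its precise combinatorial form is what makes the denominator-vector claim, the injectivity of $I \mapsto X_I$, and the irreducibility required by (LP1) checkable), you never write down the exchange polynomial of $X_I$ in $t_\s$ beyond saying it is affine-linear in the blocks receiving an edge from $I$, and the mutation-compatibility lemma --- which is the entire content of the theorem --- is asserted rather than proved. Two places where the hand-waving hides real difficulty: (i) the elementary moves on maximal nested families are more varied than ``swap $I$ for the unique $I'$ of the same support, or toggle a singleton outside $S$'': mutating at a block $I \in \s$ can also return an initial variable $X_i$ and shrink the support, and one must verify that the exchange relation $X_I X_{I'} = \hF_I$ lands on exactly the predicted partner in every case; (ii) because the mutation of $F_j$ for an \emph{unmutated} $j$ depends on the whole seed through the substitution \eqref{E:G} and the subsequent removal of common factors, showing that all exchange polynomials retain their closed linear form after a mutation is a genuine computation --- this is precisely the subtlety flagged in the introduction and in \cite[Theorem 2.4]{LP2} --- not bookkeeping one can wave at. As written, your text is a correct and well-organized plan for a proof, but not yet a proof.
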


\begin{example}
 Consider the graph $\Gamma$ on four vertices with edges $1 \longrightarrow 2$, $2 \longrightarrow 1$, $1 \longrightarrow 3$, $3 \longrightarrow 1$, $3 \longrightarrow 2$, $2 \longrightarrow 3$, $1 \longrightarrow 4$, $3 \longrightarrow 4$, $4 \longrightarrow 2$, shown in Figure \ref{fig:lp2}.
Then the initial seed is given by 
$$t_\Gamma = \{(X_1, A_1 + X_2 + X_3 + X_4),(X_2, A_2 + X_1 +X_3),(X_3, A_3 + X_1 + X_2 + X_4),(X_4, A_4 + X_2)\}.$$
The resulting LP algebra has $15$ cluster variables and $46$ clusters.
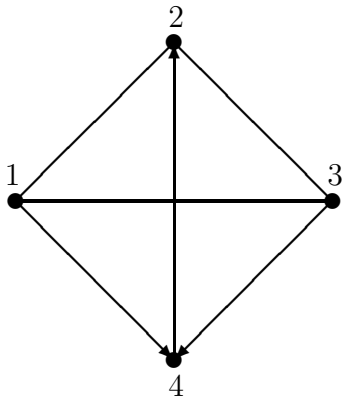
\begin{figure}[ht]
    \begin{center}
\setlength{\unitlength}{2pt}
\begin{picture}(60,60)(0,0)
\thicklines
\put(0,30){\line(1,1){30}}
\put(0,30){\vector(1,-1){30}}
\put(0,30){\line(1,0){60}}
\put(30,60){\line(1,-1){30}}
\put(60,30){\vector(-1,-1){30}}
\put(30,0){\vector(0,1){60}}
\put(0,30){\circle*{3}}
\put(-2,33){$1$}
\put(30,60){\circle*{3}}
\put(29,63){$2$}
\put(60,30){\circle*{3}}
\put(59,33){$3$}
\put(30,0){\circle*{3}}
\put(29,-7){$4$}
\end{picture}
\qquad
    \end{center} 
    \caption{The example graph
}
    \label{fig:lp2}
\end{figure}

\end{example}

It turns out the cluster complex of $\A_\Gamma$ contains inside it the nested complex studied in \cite{Pos,FS, Zel}.  In particular, there is a LP algebra $\A'_\Gamma$, obtained from $\A_\Gamma$ by freezing, such that the exchange graph of $\A'_\Gamma$ is the $1$-dimensional skeleton of a polytope known as a nestohedron \cite{Pos, Zel}.

We refer the reader to \cite{LP2} for full details on the structure of linear LP algebras arising from graphs.

\subsection{LP algebras arising from electrical Lie groups}

Consider a wiring diagram $\W$ in a disk: a collection of simple curves called {\it wires} embedded into a disk, with endpoints on the boundary of the disk, such that no two curves intersect more than once, and all intersection points are transversal.  The wires subdivide the disk into regions, and a region is {\it internal} if it is bounded completely by wires.  We assign a cluster variable to each internal region and a frozen variable to each non-internal region. 

For each internal region with variable $a$ define an exchange polynomial $F_a$ as follows. Each region $R$ adjacent to the $a$ region is either a {\it {corner region}} if it shares only a vertex with region $a$, or a {\it {side region}} if it shares an edge with region $a$.  Associate to each corner region $R$ the monomial $m_R$ obtained by multiplying 
its variable with the variables of all side regions that are not adjacent to it. Let $F_a = \sum_R m_R$ be the sum of these monomials over all the corner regions. For example, the monomials one needs to sum for a pentagonal 
region are schematically shown in Figure \ref{fig:lp3}.

\begin{figure}[ht]
    \begin{center}
\setlength{\unitlength}{2pt}
\begin{picture}(240,60)(10,0)
\thicklines
\put(5,25){\line(1,1){24}}
\put(5,35){\line(1,-2){13}}
\put(10,15){\line(1,0){30}}
\put(45,35){\line(-1,-2){13}}
\put(18,49){\line(1,-1){27}}
\put(23,50){\circle*{3}}
\put(41,19){\circle*{3}}
\put(25,10){\circle*{3}}
\put(8,20){\circle*{3}}

\put(55,25){\line(1,1){24}}
\put(55,35){\line(1,-2){13}}
\put(60,15){\line(1,0){30}}
\put(95,35){\line(-1,-2){13}}
\put(68,49){\line(1,-1){27}}
\put(95,27){\circle*{3}}
\put(75,10){\circle*{3}}
\put(58,20){\circle*{3}}
\put(63,39){\circle*{3}}

\put(105,25){\line(1,1){24}}
\put(105,35){\line(1,-2){13}}
\put(110,15){\line(1,0){30}}
\put(145,35){\line(-1,-2){13}}
\put(118,49){\line(1,-1){27}}
\put(113,39){\circle*{3}}
\put(138,10){\circle*{3}}
\put(136,38){\circle*{3}}
\put(108,20){\circle*{3}}

\put(155,25){\line(1,1){24}}
\put(155,35){\line(1,-2){13}}
\put(160,15){\line(1,0){30}}
\put(195,35){\line(-1,-2){13}}
\put(168,49){\line(1,-1){27}}
\put(163,39){\circle*{3}}
\put(186,38){\circle*{3}}
\put(163,11){\circle*{3}}
\put(163,39){\circle*{3}}
\put(191,19){\circle*{3}}

\put(205,25){\line(1,1){24}}
\put(205,35){\line(1,-2){13}}
\put(210,15){\line(1,0){30}}
\put(245,35){\line(-1,-2){13}}
\put(218,49){\line(1,-1){27}}
\put(241,19){\circle*{3}}
\put(225,10){\circle*{3}}
\put(203,29){\circle*{3}}
\put(236,38){\circle*{3}}

\end{picture}
\qquad
    \end{center} 
    \caption{The exchange polynomial of a pentagonal region is homogeneous of degree four and has five terms.
}
    \label{fig:lp3}
\end{figure}
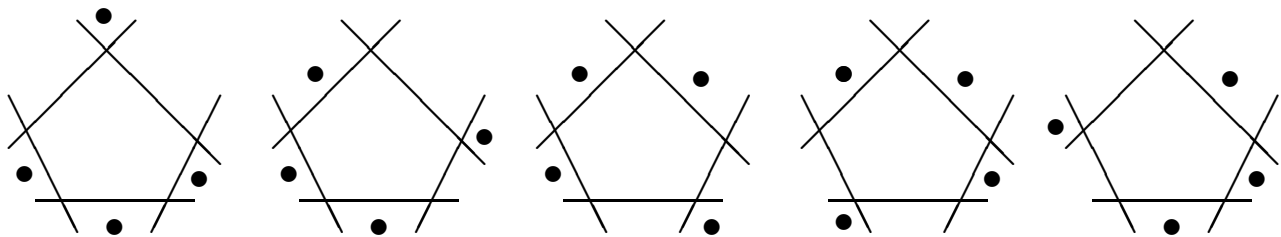

Let $S = \Z[\text{frozen variables}]$ be the polynomial ring generated by the variables associated to the non-internal regions.  Let $\FF = \Q(\text{all variables})$ be the rational function field in all the variables associated to internal or non-internal regions.
Let $t_{\W}$ be the initial seed with cluster variables corresponding to internal regions and exchange polynomials given by the rule above. Let $\A_{\W}$ be the LP algebra $t_{\W}$ generates.

\begin{example}
Consider the wiring diagram $\W$ shown in Figure \ref{fig:lp4}.  We have $$S = \Z[P,T,U,V,W,X,Y,Z]$$ and $\FF = \Frac(S[a,b,c])$.
 \begin{figure}[h!]
    \begin{center}
    \input{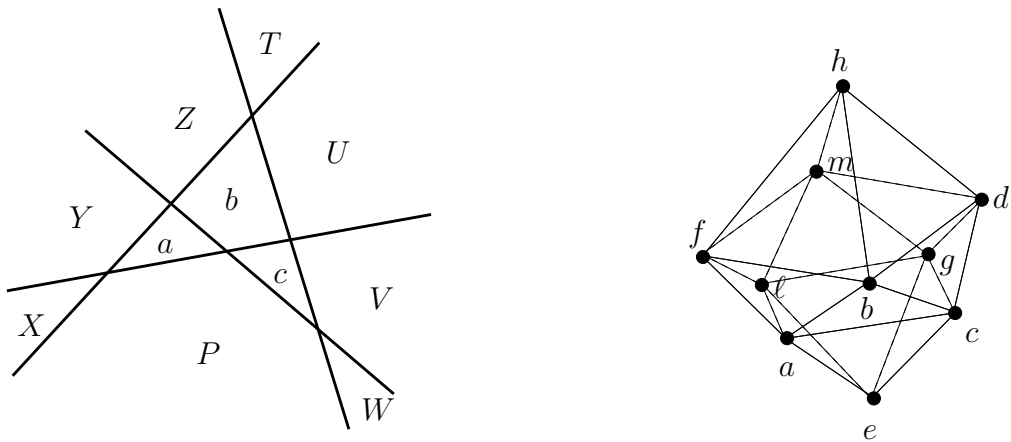}
    \end{center}
    \caption{A wiring diagram $\W$ and the variable labels of its regions; the cluster complex of the corresponding normalized finite type LP algebra $\A_{\W}$.}
    \label{fig:lp4}
\end{figure}
The initial seed in this case is 
$$t_{\W}=\{(a,b X+c Y+P Z),(b,a c T+c U Y+P U Z+a V Z),(c,P U+a V+b W)\}.$$
The exchange relations for $a$ and $c$ are
$$
ad = b X+c Y+P Z \qquad \text{and} \qquad cf=P U+a V+b W.
$$
These relations are instances of the cube recurrence  \cite{Pro}, which fit into a normalized finite type LP algebra.  Another seed in this LP algebra is
$$\{(a,e+U X),(e,a c T+c U Y+P U Z+a V Z),(c,e+W Z)\}$$
and each of the 16 seeds looks like either this seed, or the initial seed.

As we shall show in \cite{LP3}, our general LP algebra seed mutation is compatible with the combinatorics of wiring diagrams.  Namely, performing a braid move on the wiring diagram $\W$ corresponds to a LP algebra seed mutation at a cluster variable labeling a triangular region.  In particular, wiring diagrams $\W_1$ and $\W_2$ connected by braid moves give rise to seeds $t_{\W_1}$ and $t_{\W_2}$ that belong to the same LP algebra.

It turns out the dynamics of the LP algebras $\A_{\W}$ corresponds to the dynamics of transitions between factorizations of elements of {\it {electrical Lie groups}}, as defined in \cite{LPEL}. 
In fact, the LP algebras arising in this way are a natural analogue of cluster algebras appearing in double Bruhat cells of classical groups, as studied in \cite{CA3}.
The detailed study of these LP algebras and their relation with electrical Lie groups is the subject of the forthcoming \cite{LP3}.

\begin{remark}
The dynamics of cluster-like exchanges given by the formula in Figure \ref{fig:lp3} was studied by Henriques and Speyer \cite{HS} under the name of the 
{\it {multidimensional cube recurrence}}. Their work however deals only with the ``Pl\"ucker'' part of these algebras, corresponding to wiring diagrams.  In other words, the multidimensional cube recurrence only allows mutations corresponding to triangular bounded regions in the wiring diagram.  In fact, Henriques and Speyer state the problem of mutating beyond wiring diagrams as an open question. 
The Laurent phenomenon algebras $\A_{\W}$ constructed in this section accomplish this: in \cite{LP3} we show that these algebras contain all the seeds and mutations studied by Henriques and Speyer, but in addition one can perform mutations for regions with arbitrarily many sides.  Furthermore,  since LP algebras can be mutated indefinitely in all directions, we can keep mutating even after 
we have left the part described by wiring diagrams. 
\end{remark}
\end{example}

\end{document}